\documentclass[12pt,leqno,twoside]{amsart}
\usepackage[T1]{fontenc}
\usepackage[utf8]{inputenc}
\usepackage[babel=true]{csquotes}  
\usepackage{amstext,amsmath,amscd, bezier,indentfirst,amsthm,amsgen,enumerate}
\usepackage[left=2.5cm, right=2.5cm, top=2.5cm,bottom=2.5cm]{geometry} 
\usepackage{todonotes}
\usepackage{subcaption}
\usepackage[all,knot,arc,import,poly]{xy}
\usepackage{amsfonts,color}
\usepackage{amssymb}
\usepackage{latexsym}
\usepackage{epsfig}
\usepackage{graphicx}
\usepackage{srcltx}
\usepackage{enumitem}
\usepackage{accents}
\usepackage{imakeidx}
\usepackage{rotating}
\usepackage{verbatim}
\usepackage{fancyhdr}
\pagestyle{fancy}
\fancyhead{}
\fancyhead[LE]{\nouppercase{\leftmark}}
\fancyhead[RO]{\nouppercase{\rightmark}}
\fancyheadoffset[RE,LO]{-0.01\textwidth}
\makeindex
\usepackage{tikz}
\usetikzlibrary{patterns}
\usetikzlibrary{matrix,arrows,decorations.pathmorphing}
\usepackage{tikz-cd}
\tikzset{commutative diagrams/.cd}
\usepackage{framed,lipsum}
\usepackage{float}
\usepackage{pgfplots}
\usetikzlibrary{patterns}
\usetikzlibrary{intersections}
\usetikzlibrary{positioning,shadows,arrows}
\tikzstyle{every node}=[anchor=west, minimum height=3em]
\usetikzlibrary{shapes,snakes}
\usepackage{wrapfig}
\definecolor{forestgreen}{rgb}{0.00, 0.39, 0.00} 
\definecolor{blueblue}{rgb}{0.40, 0.00, 1.00} 
\definecolor{sienna}{rgb}{0.33, 0.08, 0.11}

\newcommand{\defi}{\textbf}
\theoremstyle{plain}
\newtheorem{theorem}{Theorem}[section] 

\newtheorem*{theorem*}{Theorem}
\newtheorem*{theoremfr*}{Théorème}
\newtheorem*{hypothesisfr*}{Hypothèse}
\newtheorem*{hypothesis*}{Hypothesis}
\newtheorem{lemma}[theorem]{Lemma}
\newtheorem{proposition}[theorem]{Proposition}
\newtheorem{corollary}[theorem]{Corollary}

\theoremstyle{definition}
\newtheorem{definition}[theorem]{Definition}
\newtheorem*{definition*}{Definition}
\theoremstyle{remark}
\newtheorem{remark}[theorem]{\sc Remark}
\newtheorem*{question*}{\sc Question}
\newtheorem*{remark*}{\sc Remark}
\newtheorem*{remarkfr*}{\sc Remarque}
\newtheorem*{examplefr*}{\sc Exemple}
\newtheorem*{example*}{\sc Example}

\newtheorem{example}[theorem]{\sc Example}

\def\bR{{\mathbb R}}

\def\Sing{{\rm Sing}}

\def\const.{{\rm const.}}

\def\Int{{\rm Int\ }}

\usepackage[style=alphabetic,natbib=true,backref=true,backend=bibtex]{biblatex}
\usepackage{mathscinet}
\usepackage[pdftex, pdfusetitle, plainpages=false,  bookmarks, bookmarksnumbered,colorlinks, linkcolor=blue, citecolor=red,filecolor=black, urlcolor=black]{hyperref}
\usepackage{pgfplots}
\pgfplotsset{width=7cm,compat=1.8}
\usepackage[tight]{minitoc} 
\usepackage{caption}
\setcounter{tocdepth}{1}
\makeatletter
\renewcommand*{\numberline}[1]{\hb@xt@1em{#1\hfil}} 
\makeatother

\usepackage[style=alphabetic,natbib=true,backref=true,backend=bibtex]{biblatex}
\addbibresource{biblio_teza.bib}
\usepackage{mathscinet}

\keywords{strict local minimum, Poincaré-Reeb tree, non-convexity, level curve, stabilisation, real algebraic curve, polar curve, star domain, smooth Jordan curve}
\usepackage{pgfplots}

\pgfplotsset{width=7cm,compat=1.8}

\usepackage{caption}

\usepackage[foot]{amsaddr}

\makeatletter
\renewcommand{\email}[2][]{%
  \ifx\emails\@empty\relax\else{\g@addto@macro\emails{,\space}}\fi%
  \@ifnotempty{#1}{\g@addto@macro\emails{\textrm{(#1)}\space}}%
  \g@addto@macro\emails{#2}%
}
\makeatother

\begin{document}

\title{\textsc{Measuring the local non-convexity of real algebraic curves}}
\author{Miruna-\c Stefana Sorea}
\address{\href{https://www.mis.mpg.de/de.html}{Max-Planck-Institut für Mathematik in den Naturwissenschaften, Leipzig, Germany}}
\email{miruna.sorea@mis.mpg.de}

\maketitle

\section*{Abstract}\label{sect:CrestsValleys}

The goal of this paper is to measure the non-convexity of compact and smooth connected components of real 
       algebraic plane curves. We study these curves first in a general setting and then in an asymptotic one. In particular, we consider sufficiently small levels of a real bivariate polynomial in a small enough neighbourhood of a strict local minimum at the origin of the real affine plane. We introduce and describe a new combinatorial object, called the Poincaré-Reeb graph, whose role is to encode the shape of such curves and allow us to quantify their non-convexity. Moreover, we prove that in this setting the Poincaré-Reeb graph is a plane tree and can be used as a tool to study the asymptotic behaviour of level curves near a strict local minimum. Finally, using the real polar curve, we show that locally the shape of the levels stabilises and that no spiralling phenomena occur near the origin.

\section*{Introduction}
This work is situated at the crossroads between the geometry and topology of singularities and the study of real algebraic plane curves. The origins of this subject date back to the works of Harnack, Klein, and Hilbert (\cite{Har1}, \cite{Kl1}, \cite{Hil1}). Continuous progress and recent interest on these subjects is shown for instance by Coste, de la Puente, Dutertre, Ghys, Itenberg, Sturmfels, and Viro (\cite{CP}, \cite{ND}, \cite{Gh1}, \cite{II1},\cite{Stu1}, \cite{Vi1}). 

Our goal is to measure how far from being convex a given algebraic curve is and what is its behaviour near a strict local minimum. Non-convexity plays an important role in many applications, such as optimisation theory or computational geometry (see for instance \cite{JK17} or \cite{MR3937812}). To this end, we construct and study a new combinatorial object called the Poincaré-Reeb tree, whose role is to encode the shape of a smooth compact connected component of a real algebraic plane curve. In particular, we will use this tool to quantify the non-convexity of the curve. The Poincaré-Reeb tree is adapted from the classical construction introduced by H. Poincaré (see \cite[1904, Fifth supplement, page 221]{Po}), which was rediscovered by G. Reeb in 1946 (in \cite{Re}). Both used it as a tool in Morse theory. Namely, given a Morse function on a closed manifold, they associated it a graph as a quotient of the manifold by the equivalence relation whose classes are the connected components of the levels of the function. We will perform an analogous 
      construction for a special type of manifold with boundary. We prove that in our setting the Poincaré-Reeb graph is
a plane tree, and show several other properties of this tree.

Note that there exist results on graphs encoding the topology of real algebraic plane curves (see, for example, \cite{CLPPRT} and the references therein). Those graphs are different from our novel Poincaré-Reeb construction.

We will now list the main contributions of this paper.

\begin{figure}[H]
\centering
\includegraphics[scale=0.19]{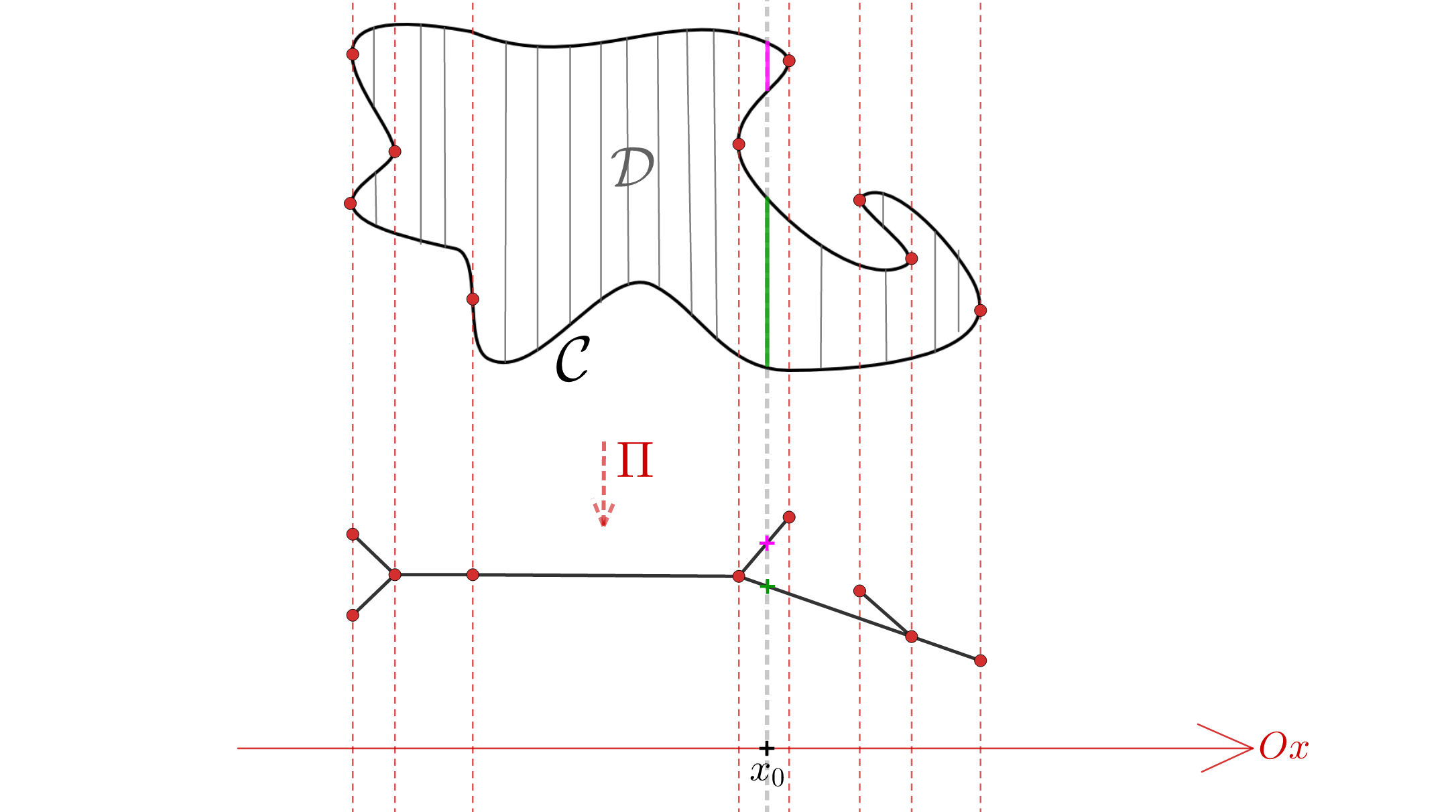} 
\caption{The Poincaré-Reeb graph of a smooth and compact connected component of a real algebraic curve in $\mathbb{R}^2$.\label{fig:jordanReeb}}
\end{figure}

\textbf{Measuring the non-convexity via the The Poincaré-Reeb tree.} Consider a compact and smooth connected component $\mathcal{C}$ of a real algebraic curve contained in the plane $\mathbb{R}^2$ (see Figure \ref{fig:jordanReeb}). Denote by $\mathcal{D}$ the disk bounded by $\mathcal{C}$. Endow the plane $\mathbb{R}^2$ with its canonical orientation and with its foliation by vertical lines. The curve $\mathcal{C}$, being a connected component of an  algebraic curve, has only finitely many points of vertical tangency. The vertices of the Poincaré-Reeb graph are images of the points of the curve having vertical tangent by the projection 
       $\Pi : \mathcal{D} \to \mathbb{R}$, $\Pi(x,y):= x$. For $x_0\in\mathbb{R},$ if the fibre $\Pi^{-1}(x_0)$ is not the  empty set, then it is a finite union of vertical segments. We define an equivalence relation that lets us contract each of these segments into a point. By the quotient map, the image of the initial disk becomes a one-dimensional connected topological subspace embedded in the real plane. More precisely, it is a connected plane graph. In addition to this construction, we prove the following result:

\begin{theorem}[see Corollary \ref{cor:PReebInGeneral}]\label{th:transverseTree}
The Poincaré-Reeb graph associated to 
a compact and smooth connected component of a real 
       algebraic plane curve and to
a direction of projection $x$ is a plane tree whose open edges are transverse to the foliation induced by the function $x$. Its vertices are endowed with a total preorder relation induced by the function $x$.
\end{theorem}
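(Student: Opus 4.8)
The plan is to exhibit $G:=\mathcal{D}/\!\!\sim$ as a finite graph via a product structure for $\Pi$ over the regular intervals, read off the formal properties (transversality, preorder, connectedness, planarity), and then prove acyclicity using simple connectedness of the disk $\mathcal{D}$. First I would record the elementary facts: $\mathcal{C}=\partial\mathcal{D}$ is a smooth Jordan curve and, being a compact connected component of a real algebraic curve, is not a vertical line, so every vertical line meets it in a finite set; hence $\mathcal{C}$ has finitely many vertical tangency points and every fibre $\Pi^{-1}(c)\cap\mathcal{D}$ is a finite union of (possibly degenerate) vertical segments. Writing $[a,b]$ for the image of $\Pi$ and $a=t_0<\dots<t_N=b$ for the $x$-coordinates of the vertical tangency points, I would then show that over each regular interval $(t_j,t_{j+1})$ the number $m_j$ of fibre components is constant, each such component is a non-degenerate segment whose endpoints trace monotone arcs of $\mathcal{C}$ (which is transverse to the vertical lines there, so the implicit function theorem applies), and therefore $\Pi^{-1}\big((t_j,t_{j+1})\big)\cap\mathcal{D}$ is fibrewise a disjoint union of $m_j$ rectangles. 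This forces $G$ to be a finite graph, its open edges being the fibre components over the regular intervals and its vertices the images of the vertical tangency points (equivalently, the classes of the fibre segments containing them); and since the quotient map collapses connected sets and $\mathcal{D}$ is locally connected, it induces a bijection on the connected components of every sublevel set $\mathcal{D}\cap\{x\le c\}$.

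Most of the stated properties then come out formally. The function $x$ descends to a continuous $\overline{x}\colon G\to[a,b]$, and by the rectangle description it restricts on each open edge to a homeomorphism onto an open subinterval of a regular interval; inside the plane this is exactly the statement that every open edge is an embedded arc meeting each vertical line transversally, i.e. transverse to the foliation induced by $x$. Declaring $v\preceq w$ when $\overline{x}(v)\le\overline{x}(w)$ gives a total preorder on the vertex set, which is not a partial order precisely because two distinct vertices can lie on one vertical line. Connectedness of $G$ is automatic, as $G$ is a continuous image of the connected set $\mathcal{D}$. For planarity I would use the embedding $\mathcal{D}\subset\mathbb{R}^2$: at a vertex $v$ with $\overline{x}(v)=t_j$ the incident edges split into those lying over $\{x>t_j\}$ and those over $\{x<t_j\}$, each group linearly ordered by the vertical height of the corresponding fibre components, and by choosing representatives continuously inside each rectangle and matching them across the walls one obtains a topological embedding $G\hookrightarrow\mathbb{R}^2$ compatible with $\Pi$; so $G$ is a plane graph.

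The heart of the matter is that $G$ has no cycles, which together with connectedness makes it a tree, hence a plane tree. The quotient map $q\colon\mathcal{D}\to G$ collapses each connected $x$-fibre component to a point, so it is a Reeb-type quotient, and such quotients do not increase the first Betti number; thus $b_1(G)\le b_1(\mathcal{D})=0$. I would make this concrete — and this is essentially the proof of the Betti bound in this case — by sweeping $c$ from $a$ to $b$ and following the subcomplex $G_{\le c}=\overline{x}^{-1}\big((-\infty,c]\big)$, the Reeb quotient of $\mathcal{D}\cap\{x\le c\}$: crossing a regular value only prolongs the existing edges, while crossing a critical value $t_j$ amalgamates, for each wall-component $\Sigma$ of $\Pi^{-1}(t_j)\cap\mathcal{D}$, the finitely many edges of $G_{<t_j}$ running into $\Sigma$ into a single vertex. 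No cycle appears provided these edges always come from distinct components of $G_{<t_j}$ — equivalently of $\mathcal{D}\cap\{x<t_j\}$ — and this holds because $b_1\big(\mathcal{D}\cap\{x\le t_j\}\big)=0$, which follows from Alexander duality once one observes that $\mathbb{R}^2\smallsetminus\big(\mathcal{D}\cap\{x\le t_j\}\big)=\big(\mathbb{R}^2\smallsetminus\mathcal{D}\big)\cup\{x>t_j\}$ is connected, being the union of two connected sets with common points far to the right. Morally, a ``reconnection'' would mean $\mathcal{D}$ itself contains a nontrivial loop, contradicting that $\mathcal{C}$ is a single Jordan curve.

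I expect the main obstacle to be exactly this acyclicity step — precisely the claim that amalgamation at a wall never joins two edges already lying in the same component of $G_{<t_j}$ — since that is where simple connectedness of $\mathcal{D}$ is genuinely used, and it needs care when the direction $x$ is not generic: several vertical tangency points may then share an $x$-coordinate, and a wall $\Pi^{-1}(t_j)\cap\mathcal{D}$ may be tangent to $\mathcal{C}$ at its own level, so the local model at a vertex is more complicated than a single valley or ridge. I would handle these degeneracies directly rather than by perturbing the projection, since a perturbation would not describe the Poincar\'e--Reeb graph of the \emph{given} direction. Everything else — finiteness of $G$, continuity of $\overline{x}$, transversality of the open edges, the preorder on vertices, connectedness, and the planar embedding — should follow routinely from the product structure of $\Pi$ over the regular intervals together with the embedding $\mathcal{D}\subset\mathbb{R}^2$.
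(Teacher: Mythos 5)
Your construction of $G$ as a finite plane graph --- the rectangle decomposition of $\Pi$ over the regular intervals, the planar embedding by vertical order, the transversality of open edges, the total preorder via $\overline{x}$, and the connectedness of $G$ as a continuous image of $\mathcal{D}$ --- parallels the paper's Section~\ref{sect:ConstructionPR}, which gets the same facts by showing that $\tilde{P}=\mathbb{R}^2/\!\!\sim$ is a trivial $\mathbb{R}$-bundle over the $x$-axis, hence homeomorphic to $\mathbb{R}^2$; both are direct and essentially interchangeable. Where you genuinely diverge from the paper is the acyclicity step. The paper (Proposition~\ref{Prop:ReebOfDiskIsTree}) proves $b_1(G)=0$ in one stroke with the Fubini theorem for integration against the Euler characteristic: each fibre of the Reeb quotient $\pi$ is a segment, so $\pi_*\mathbf{1}_{\mathcal{D}}=\mathbf{1}_G$, hence $\chi(G)=\chi(\mathcal{D})=1$, and connectedness then forces $b_1=0$. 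Your route is instead an inductive sweep in $c$, ruling out a cycle-creating amalgamation at each wall by Alexander duality: the complement of $\mathcal{D}\cap\{x\le c\}$ in $S^2$ is $(\mathbb{R}^2\setminus\mathcal{D})\cup\{x>c\}\cup\{\infty\}$, a union of connected sets with common points, hence connected, so $H_1\big(\mathcal{D}\cap\{x\le c\}\big)=0$. That is a correct and more elementary line that trades the constructible-function calculus for the Jordan curve theorem plus Alexander duality. The one step you should spell out is the bridge from $H_1\big(\mathcal{D}\cap\{x\le c\}\big)=0$ to the absence of a bad amalgamation: you need that any cycle in the Reeb quotient $G_{\le c}$ lifts, up to homotopy, to a loop in $\mathcal{D}\cap\{x\le c\}$ (equivalently, that the Reeb quotient map is $\pi_1$-surjective); this is true and can be proved by lifting the cycle edge by edge and joining the lifts inside the connected fibre segments, but it is a lemma, not automatic, and is precisely what the Euler-characteristic identity lets the paper bypass. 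With that lemma in place your sweep closes. The trade-off: the paper's $\chi(G)=\chi(\mathcal{D})$ is induction-free and ports verbatim to the asymptotic setting of Theorem~\ref{th:asymptoticPReeb}; your version is self-contained and makes visible \emph{why} the result holds --- the plane is simply connected, so a wall can never rejoin two pieces of the disk without enclosing a region outside $\mathcal{D}$.
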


Note that the main tool used in the proof of Theorem \ref{th:transverseTree} is the integration of constructible functions with respect to the Euler characteristic (see \cite[Chapter 3, page 22]{Co}).

$ \\ $

\textbf{Nested level curves of a real bivariate polynomial near a strict local minimum.} Let us consider a polynomial function $f:\mathbb{R}^2\rightarrow \mathbb{R}$, such that $f$ has a strict local minimum at the origin and $f(0)=0$. We focus on the so-called real Milnor fibres at the origin of $f$, namely the small enough level curves $(f(x,y)=\varepsilon),$ for sufficiently small $0<\varepsilon\ll 1.$ We prove that they are smooth Jordan curves (see Lemma \ref{lemma:origInInterior}). The simplest example is the circle. Whenever the origin is a Morse strict local minimum, we show that the small enough levels become boundaries of convex topological disks. In the non-Morse case, these curves may fail to be convex, as was shown by a counterexample found by Michel Coste.  Let us restrict the study of the Poincaré-Reeb trees to the asymptotic setting, namely let us study the trees associated to small enough level curves of a real bivariate polynomial function, near a strict local minimum at the origin of the plane. We show that the tree stabilises for sufficiently small level curves. We call it the asymptotic Poincaré-Reeb tree. Therefore the shape of the levels stabilises near the strict local minimum (see Figure \ref{fig:stabShapes}).

\begin{figure}[H]
\centering
\includegraphics[scale=0.3]{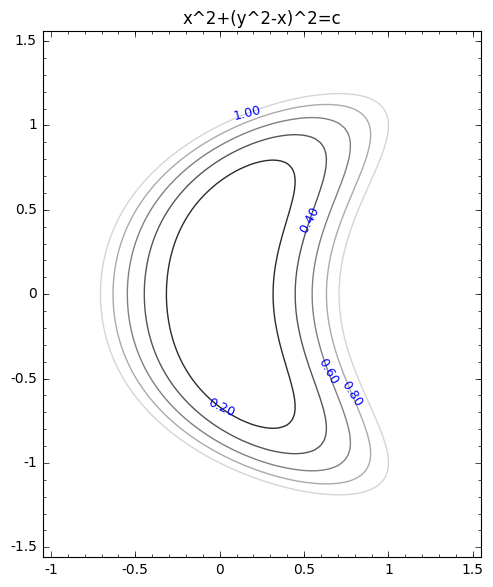} 
\caption{Stabilisation of the shape of the levels.\label{fig:stabShapes}}
\end{figure}

We also prove the following result:

\begin{theorem}[see Theorem \ref{th:asymptoticPReeb}]
The asymptotic Poincaré-Reeb tree is a rooted tree and the total preorder relation on its vertices, induced by the function $x$, is strictly monotone on each geodesic starting from the root. 
\end{theorem}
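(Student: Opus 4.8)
Here is the strategy I would follow; throughout, ``$\varepsilon$ small'' means small enough that $C_\varepsilon:=(f=\varepsilon)$ is a smooth Jordan curve (Lemma~\ref{lemma:origInInterior}) bounding a disk $\mathcal{D}_\varepsilon\ni 0$ sitting in a tiny neighbourhood of the origin, and small enough that the asymptotic tree is already attained; write $\Pi(x,y)=x$. By Theorem~\ref{th:transverseTree} / Corollary~\ref{cor:PReebInGeneral} the Poincaré--Reeb graph $G_\varepsilon$ of $(\mathcal{D}_\varepsilon,\Pi)$ is a plane tree whose open edges are transverse to the vertical foliation, so $x$ is strictly monotone on each open edge and adjacent vertices carry distinct abscissae. \textbf{The root.} Since $f$ restricted to the line $(x=0)$ has a strict local minimum at the origin, for $\varepsilon$ small the fibre $\Pi^{-1}(0)\cap\mathcal{D}_\varepsilon$ is a single vertical segment through $0$; I take its image $r\in G_\varepsilon$ as the (canonically defined, $\varepsilon$-independent) root, subdividing an edge there if $0$ is not a critical value of $x$. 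The same computation shows that any vertex with abscissa $0$ equals $r$, so no open edge of $G_\varepsilon$ meets the level $x=0$ and $G_\varepsilon$ splits at $r$ into a subtree $T^{+}$ lying over $[0,\infty)$ and a subtree $T^{-}$ lying over $(-\infty,0]$.

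\textbf{Reduction.} The plan is to reduce the theorem to the local statement: \emph{for $\varepsilon$ small, every vertex $v\neq r$ with $x(v)>0$ has exactly one incident edge along which $x$ decreases}; applying $x\mapsto -x$ then gives that every vertex $v\neq r$ with $x(v)<0$ has exactly one incident edge along which $x$ increases. Granting this, let $r=u_0,u_1,\dots,u_k$ be a geodesic from the root; as $u_1\neq r$ we have $x(u_1)\neq 0$, say $x(u_1)>0$ (the opposite case is symmetric). An induction then gives $x(u_0)<x(u_1)<\cdots<x(u_k)$: if $x(u_0)<\cdots<x(u_i)$ with $i\ge 1$, then $x(u_i)>0$, so $u_i\neq r$, and the edge $\{u_{i-1},u_i\}$ is the unique edge at $u_i$ along which $x$ decreases; since $u_{i+1}\neq u_{i-1}$, the edge $\{u_i,u_{i+1}\}$ must go to strictly larger $x$. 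Hence the total preorder induced by $x$ is strictly monotone along every geodesic issuing from $r$, which together with the tree structure of Theorem~\ref{th:transverseTree} is exactly Theorem~\ref{th:asymptoticPReeb}.

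\textbf{The polar curve, which is the heart of the matter.} A vertex of $G_\varepsilon$ with several incident edges is the image of a connected component $S$ of a fibre $\Pi^{-1}(x_0)\cap\mathcal{D}_\varepsilon$, and the branching comes from the vertical tangency points of $C_\varepsilon$ lying on $S$, i.e.\ from points of $C_\varepsilon$ on the real polar curve $\Gamma=(\partial_y f=0)$. For $\varepsilon$ small, $\mathcal{D}_\varepsilon$ shrinks to the origin, so every such point lies on one of the finitely many half-branches of $\Gamma$ emanating from the origin; along each of these $f$ is strictly increasing as one leaves the origin, its restriction being a nonzero real-analytic germ that vanishes at and is positive near the origin. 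Parametrising a ``right'' half-branch $\gamma$ by a real-analytic arc $t\mapsto(a(t),b(t))$, $t>0$, with $a(t)>0$ and hence $a'(t)>0$, and using $\partial_y f\equiv 0$ on $\gamma$, one gets
\[
\frac{d}{dt}\,f(a(t),b(t))=\partial_x f(a(t),b(t))\,a'(t)>0,
\]
so $\partial_x f>0$ all along the right half-branches of $\Gamma$ and $\partial_x f<0$ along the left ones. Now take $v$ with $x(v)=x_0>0$: every vertical tangency point $p$ on the defining segment $S$ of $v$ lies over $x_0>0$, hence on a right half-branch, hence satisfies $\partial_x f(p)>0$ (and $\partial_x f(p)\neq0$ also because $C_\varepsilon$ is smooth at $p$). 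Expanding $f$ near $p$ and writing $C_\varepsilon$ locally as $x=\xi(y)$ with $\xi'(y_p)=0$, one obtains $\xi(y)-x_0=-\frac{1}{\ell!}\frac{\partial_y^{\ell}f(p)}{\partial_x f(p)}(y-y_p)^{\ell}+\cdots$, where $\ell\ge 2$ is minimal with $\partial_y^{\ell}f(p)\neq0$ ($\ell$ finite, as $C_\varepsilon$ contains no vertical segment). Since $\partial_x f(p)>0$: if $\ell$ is odd the tangency is inflectional and the fibre is not broken near $p$; if $\ell$ is even the local trace of $\mathcal{D}_\varepsilon$ near $p$ has the form $\{\,x\le x_0+c\,(y-y_p)^{\ell}(1+\cdots)\,\}$ with $c\neq0$, hence is connected and nonempty on the side $x<x_0$ and is empty or splits into two components on the side $x>x_0$. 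Combining these local pictures along $S$, together with the continuous motion of any transversal endpoints of $S$, shows that the fibre just to the left of $x_0$ is a single segment near $S$ while the one just to the right may have several components; thus $v$ has exactly one incident edge along which $x$ decreases, which is the local statement used above.

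\textbf{Where the difficulty lies.} The crux is the identification of the branching of a Poincaré--Reeb vertex with the sign of $\partial_x f$ along the corresponding half-branch of the polar curve, plus the elementary remark that the strict local minimum forces this sign; monotonicity then follows formally. The one point that needs care rather than a new idea is the non-generic situation in which several half-branches of $\Gamma$ have the same abscissa, so that a single fibre segment $S$ carries several vertical tangency points simultaneously: one must run the local analysis above at all of them at once and check that, since they all lie on right half-branches, none of them breaks $S$ towards smaller $x$, so that the count of exactly one left edge survives.
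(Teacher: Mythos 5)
Your argument is correct, and it is built on the same pillars as the paper's own proof: the root is the image of the fibre over $x=0$ (which contains the origin and is a single segment), the tree splits into a positive and a negative part on either side of the root, and the control comes from the polar curve via the strict monotonicity of $x$ and of $f$ on each polar half-branch (Corollaries \ref{cor:xStrictIncreasing} and \ref{RemarkStrictlyIncr}). The difference is one of depth. The paper's proof stops at the assertion that $\tilde{x}$ attains its local maxima only at the leaves of the positive tree; you actually establish this, by combining the two monotonicity corollaries through the chain rule to get $\partial_x f>0$ along every right polar half-branch, deducing that $\mathcal{D}_\varepsilon$ lies locally to the left of $\mathcal{C}_\varepsilon$ at every vertical tangency over $x_0>0$, classifying the local fibre topology there ($\ell$ even or odd), and concluding that every vertex over $x_0>0$ other than the root has exactly one descending edge; the induction along geodesics then closes the argument. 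This local analysis is precisely what the paper leaves implicit, so your route is the more complete one. Two minor points. First, the inference \enquote{$a(t)>0$ hence $a'(t)>0$} is not literally valid: you need Corollary \ref{cor:xStrictIncreasing} for the monotonicity of $a$, and since $a'$ may still vanish at isolated parameters, the sign of $\partial_x f$ at the tangency point itself should be obtained by continuity from nearby parameters together with $\partial_x f(p)\neq 0$, which you do record via the smoothness of $\mathcal{C}_\varepsilon$. Second, you assume stabilisation from the outset; that is legitimate for the statement as quoted, but the full Theorem \ref{th:asymptoticPReeb} also asserts it, and the paper proves it separately by comparing the Newton--Puiseux parametrisations of the abscissae of pairs of polar half-branches.
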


The strict monotonicity on the geodesics starting from the root allows us to prove that the small enough level curves $\mathcal{C}_\varepsilon$ have no turning back or spiralling phenomena. 

In other words, near a strict local minimum, for $0<\varepsilon\ll 1$ sufficiently small there are no level curves $(f=\varepsilon)$ like the one in Figure \ref{fig:spiralNo}.

\begin{figure}[H]
\centering
\includegraphics[scale=0.099]{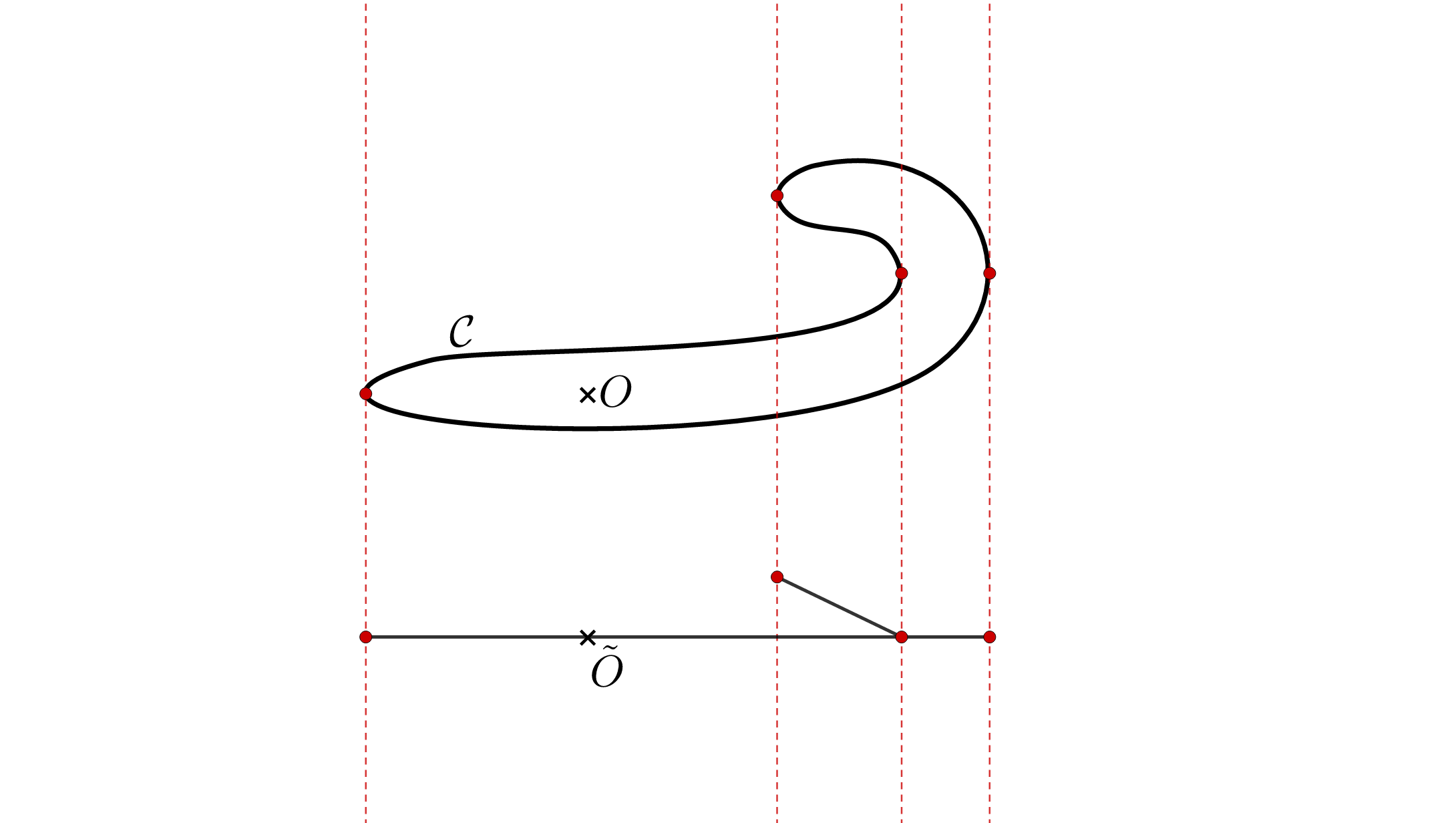} 
\caption{Forbidden shape for small enough $0<\varepsilon\ll 1$: there are no going backwards like this.\label{fig:spiralNo}}
\end{figure}

The present paper is organised as follows. After introducing the necessary notations and hypotheses in Section \ref{SectHypothesis}, we show in Section \ref{sect:convex} that topological disks bounded by the level curves of $f$ in a small enough neighbourhood of a \emph{Morse} strict local minimum are convex sets (see Theorem \ref{th:convexNondegenerated}). Section \ref{sect:costeGener} is dedicated to Coste's counterexample (see Example \ref{ex:Coste}) in the \emph{non-Morse} case and our generalisations. In Section \ref{sect:ConstructionPR} we give the construction of a new combinatorial object, called the Poincaré-Reeb graph, whose role is to measure the non-convexity of a compact smooth connected component of a plane algebraic curve in $\mathbb{R}^2$. Using a Fubini-type theorem and integration with respect to the Euler characteristic, we show that the Poincaré-Reeb graph is in fact a plane tree (see Corollary \ref{cor:PReebInGeneral}). Section \ref{sect:asymp} focuses on the asymptotic case, where the real polar curve plays a key role. We prove that in a small enough neighbourhood of the origin the Poincaré-Reeb trees become equivalent, starting with a small enough level. In other words, the shape of the levels stabilises near the origin. In addition, the Poincaré-Reeb tree allows us to prove Theorem \ref{th:asymptoticPReeb}. Namely we show that asymptotically there is no spiralling phenomenon near the strict local minimum. This is due to the fact that the induced preorder endowing the set of vertices is strictly monotonous on each geodesic starting from the root of the tree.

\begin{figure}[H]
\centering
\includegraphics[scale=0.13]{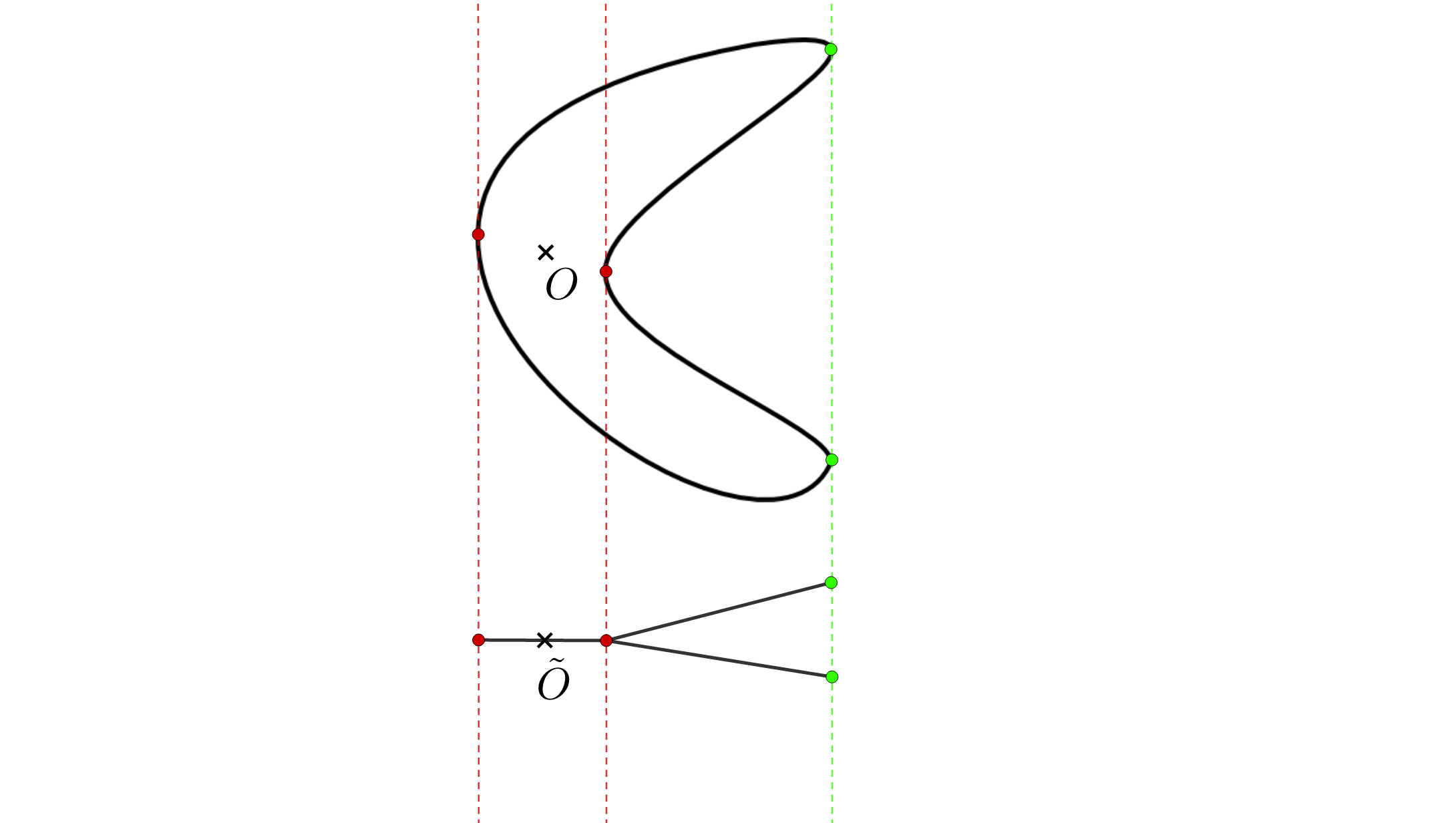} 
\caption{An asymptotic Poincaré-Reeb tree.\label{fig:asymptotic}}
\end{figure}

\section*{Acknowledgements}
This paper represents a part of my PhD thesis (see \cite{So1}), defended at \href{http://math.univ-lille1.fr/}{\textit{Paul Painlevé Laboratory}}, \href{https://www.univ-lille.fr/}{\textit{Lille University}} and financially supported by \href{http://math.univ-lille1.fr/~cempi/}{\textit{Labex CEMPI}} (ANR-11-LABX-0007-01) and by \href{http://www.hautsdefrance.fr/}{\textit{Région Hauts-de-France}}. I would like to express my gratitude towards my PhD advisors, \href{http://math.univ-lille1.fr/~bodin/index.html}{Arnaud Bodin} and \href{http://math.univ-lille1.fr/~popescu/}{Patrick Popescu-Pampu}, for their guidance throughout this work. I would like to thank \href{http://ergarcia.webs.ull.es/}{Evelia R. Garc\' ia Barroso} and \href{https://webusers.imj-prg.fr/~ilia.itenberg/}{Ilia Itenberg} for reviewing my PhD thesis and \href{http://perso.ens-lyon.fr/ghys/accueil/}{\' Etienne Ghys} for being the president of the committee. 

\section{Notations and hypotheses}\label{SectHypothesis}

In this chapter, the expressions \enquote{sufficiently small $\varepsilon$} or \enquote{small enough $\varepsilon$} mean: \enquote{there exists $\varepsilon_{0}>0$ such that, for any $0<\varepsilon<\varepsilon_{0}$, one has \ldots}. We will often denote this by $0<\varepsilon\ll 1.$

We will consider a polynomial function $f:\bR^2\rightarrow \bR$ in two variables with $f(0,0)=0$. Let us \textbf{fix a neighbourhood} $V$ of $(0,0)$ such that:

- the point $(0,0)$ is \textbf{the only strict local minimum} of $f$ in $V$;

- we have $V\cap (f=0)=\{(0,0)\}.$

\begin{definition}\label{def:levelCEpsilon}
Let us consider a polynomial function $f:\mathbb{R}^2\rightarrow\mathbb{R}$ that vanishes at the origin $O$, exhibiting a strict local minimum at this point. For $\varepsilon>0$, consider the set $f^{-1}([0,\varepsilon])$ and denote \defi{its connected component that contains the origin} by $\mathcal{D}_\varepsilon$. Denote by $\mathcal{C}_\varepsilon:=\mathcal{D}_\varepsilon\setminus \Int \mathcal{D}_\varepsilon.$ 

\end{definition}

\begin{figure}[H]
\centering

\includegraphics[scale=0.19]{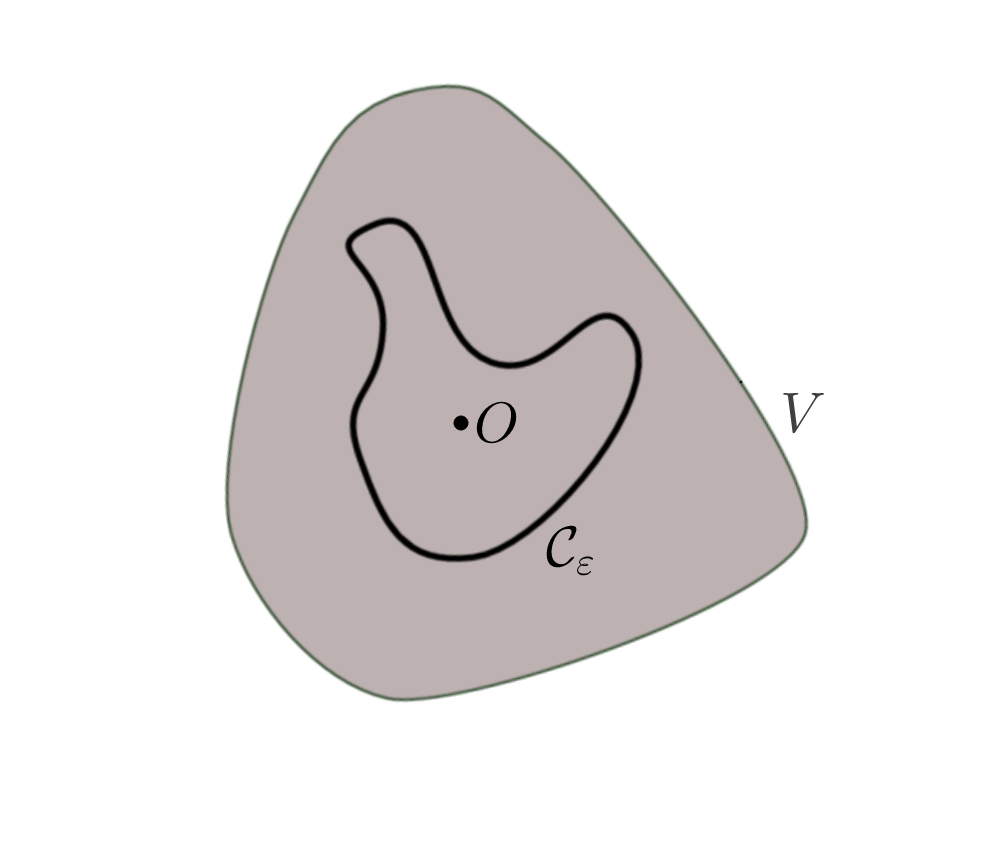} 
\caption{The curve $\mathcal{C}_\varepsilon$.\label{fig:izolat}}
\end{figure}

Later on (see Lemma \ref{lemma:origInInterior}), we shall prove that for $0<\varepsilon\ll 1$ sufficiently small, $\mathcal{C}_\varepsilon$ is diffeomorphic to a circle and that $\mathcal{D}_\varepsilon$ is diffeomorphic to a disk, completely included in $V$.

\begin{example}\label{ex:acnode}
The surface $\mathrm{graph}(f):=\{(x,y,z)\in\mathbb{R}^3 \mid z=f(x,y)\}$ of the cubic $f(x,y):=y^2-x^3+x^2$ is shown in Figure \ref{fig:acnode3D} (see for instance \cite[page 57, Figure 2.2]{Wk}).

\begin{figure}[H]
\centering
\includegraphics[scale=0.19]{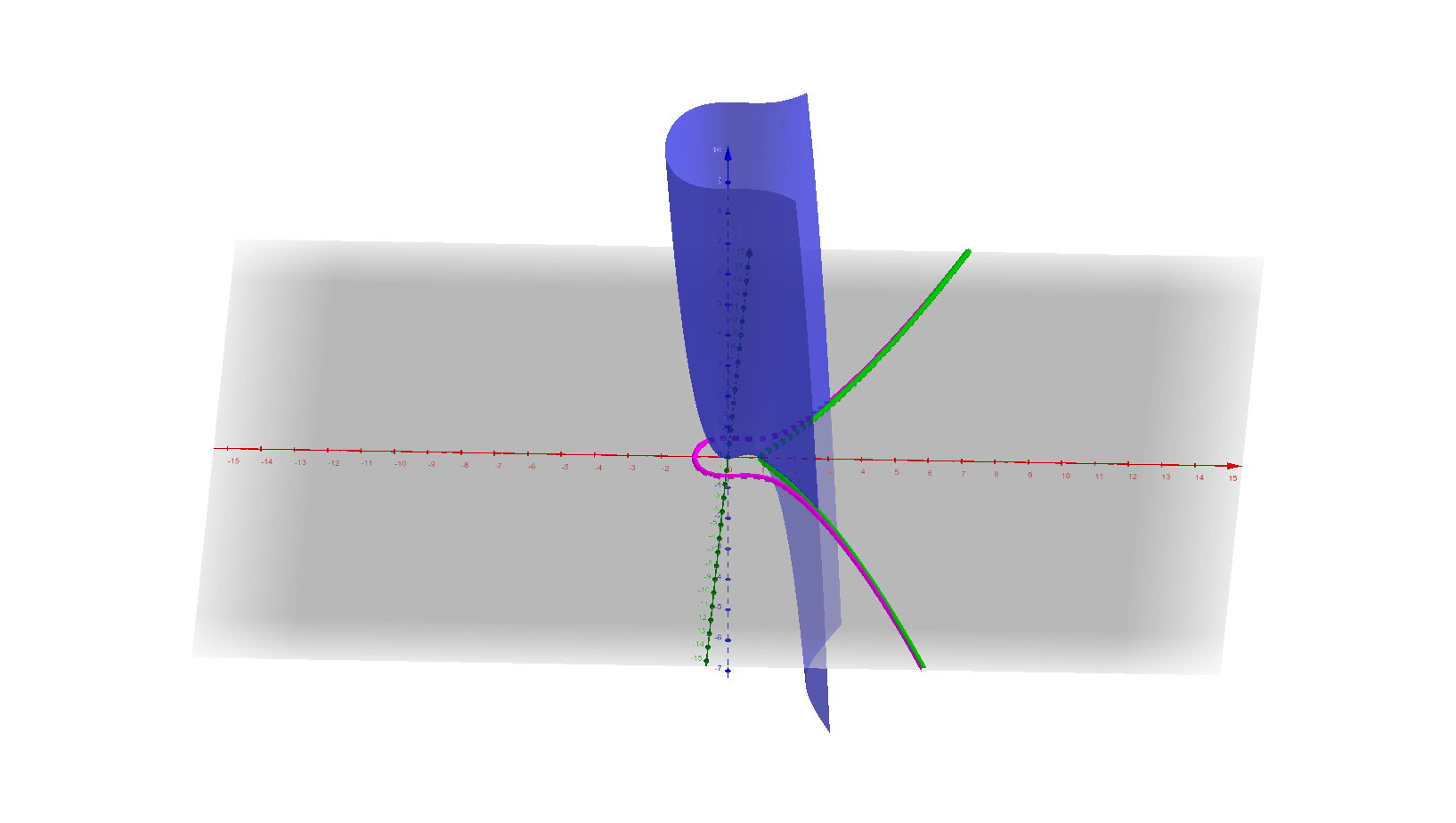} 
\caption{The graph of the cubic $f(x,y):=y^2-x^3+x^2$.\label{fig:acnode3D}}
\end{figure}

We did not choose yet a sufficiently small neighbourhood $V$ of the origin. An illustration of such a neighbourhood  $V$ that satisfies the hypotheses from Section \ref{SectHypothesis} is shown in Figure \ref{fig:cubic} below. The zero locus of $f$ has two connected components: a point and a curve. We take $V$ small enough such that it does not intersect the curve, that is the connected component far from the origin.

\begin{figure}[H]
\centering
\includegraphics[scale=0.15]{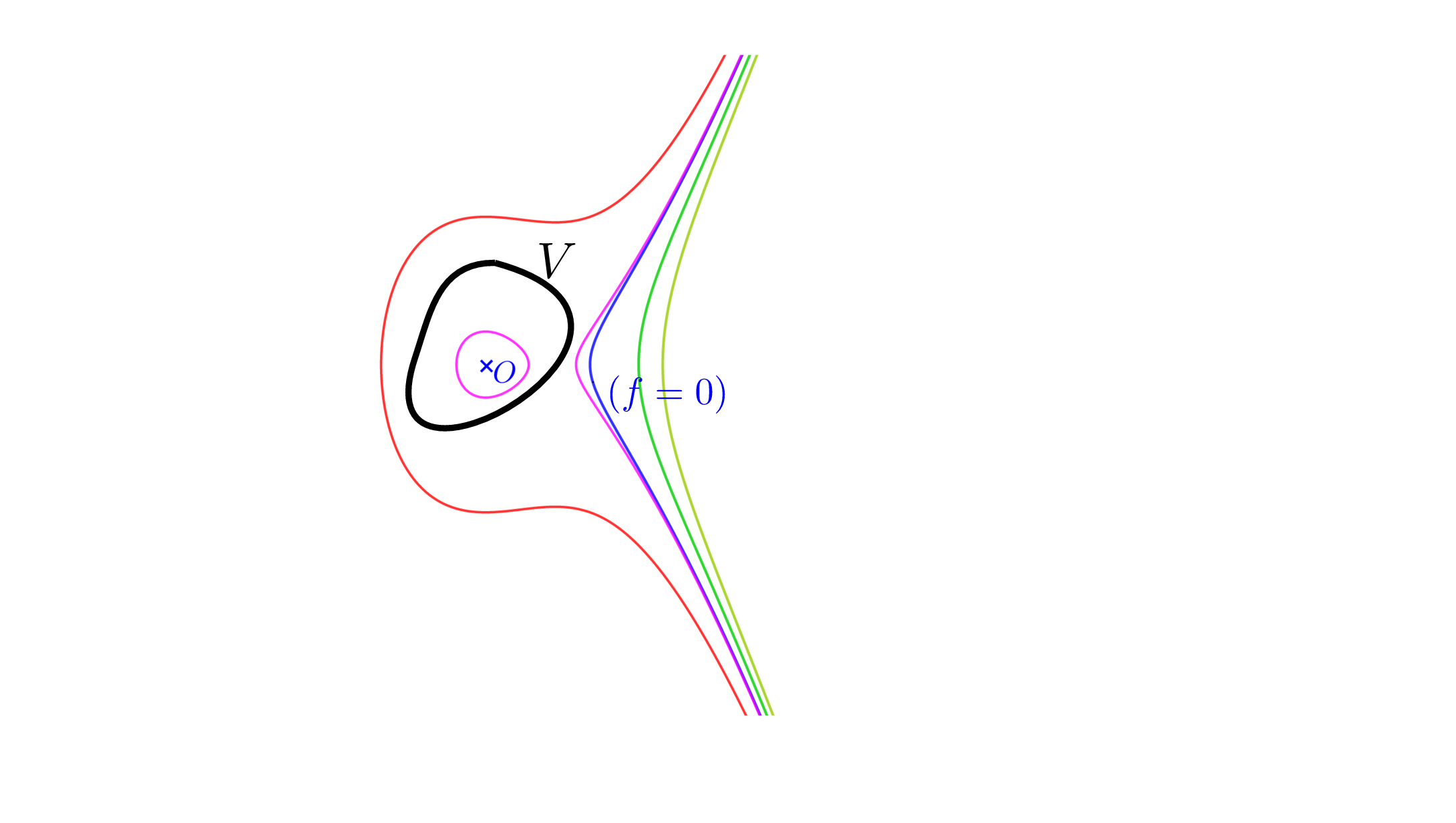} 
\caption{Choose a small enough neighbourhood $V$ of the origin such that the point $(0,0)$ is the only strict local minimum of $f$ in $V$ and $V\cap (f=0)=\{(0,0)\}.$\label{fig:cubic}}
\end{figure}

\end{example}

Examples of functions $f$ as in Section \ref{SectHypothesis} are shown in Figure \ref{fig:convexStrictMin}, Figure \ref{fig:banana3D}, Figure \ref{fig:boneLevelsSets}.

For the following classical definition see for instance \cite[page 10]{Mi2}:
\begin{definition}\label{def:SingularPoints}
Let $\varphi:\mathbb{R}^n\rightarrow \mathbb{R}^p,$ with $n,p\in\mathbb{N}^*$, $n\geq p,$ be a polynomial map.

 Let $\rho:=\mathrm{max}\{\mathrm{rank}\ \mathrm{Jac}(\varphi)(x)\mid x\in \mathbb{R}^n\}$ be the maximal rank of $\varphi$, where $\mathrm{Jac}(\varphi)(x)$ is the Jacobian matrix of $\varphi,$ evaluated at $x.$ 
 
Then the set of \defi{singular points}\index{singular point} (called also \defi{critical points}\index{critical point}) of $\varphi$ is by definition $$\mathrm{Sing}(\varphi):=\{x\in \mathbb{R}^n\mid \mathrm{rank}\ \mathrm{Jac}(\varphi)(x)<\rho\}.$$ The image of a singular point under $\varphi$ is called a \defi{singular value} of $\varphi$\index{singular value}.

 A point $x\in\mathbb{R}^n$ is called \defi{regular}\index{regular point} of $\varphi$ if it is not singular. 

\end{definition}

\begin{example}
Let $f:\bR^2\rightarrow\bR$ be a polynomial function. Then by Definition \ref{def:SingularPoints}, the set of singular points of $f$ is 
$\Sing f=\left \{(x,y)\in\bR^2\;\middle|\; \frac{\partial f}{\partial x}(x,y)=\frac{\partial f}{\partial y}(x,y)=0 \right \}.$

For more examples, see for instance \cite{PPParxiv} or \cite[page 57]{Wk}.
\end{example}

\section{Morse extrema and convex level sets}\label{sect:convex}

The classical well-known statement that follows is meant to present equivalent characterisations of convex functions (see, for instance, \cite[pages 640-650]{RRR}).

\begin{theorem}\label{th:convexProp}
Let us consider a convex set $C\subset\mathbb{R}^2$ and a $\mathcal{C}^2$ function $f:C\rightarrow\mathbb{R}$. The following properties are equivalent:

(1) for any two points $x$, $y\in C$, for any $t\in [0,1],$ we have $$f\left (tx+(1-t)y\right)\leq tf(x)+(1-t)f(y);$$

(2) the set $\mathrm{epi}(f):=\{(x,y,z)\in\mathbb{R}^3\mid z\geq f(x,y)\}$ is convex;

(3) the Hessian matrix of $f$, denoted by $\mathrm{Hess}_f(x,y)$, is everywhere  positive definite or positive semidefinite.
\end{theorem}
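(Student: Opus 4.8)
The plan is to establish two separate equivalences, $(1)\Leftrightarrow(2)$ and $(1)\Leftrightarrow(3)$, the first being a direct translation between a function and its epigraph, the second being the analytic core.

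First I would treat $(1)\Leftrightarrow(2)$. For $(1)\Rightarrow(2)$: given $(x_0,z_0),(x_1,z_1)\in\mathrm{epi}(f)$ and $t\in[0,1]$, one has $z_i\ge f(x_i)$, so $(1)$ gives $f(tx_0+(1-t)x_1)\le tf(x_0)+(1-t)f(x_1)\le tz_0+(1-t)z_1$, which is exactly the assertion that the convex combination lies in $\mathrm{epi}(f)$. Conversely, applying the convexity of $\mathrm{epi}(f)$ to the two points $(x,f(x))$ and $(y,f(y))$ and reading off the last coordinate yields $(1)$.

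Next I would prove $(1)\Leftrightarrow(3)$ by the classical reduction to one variable. Fix $p,q\in C$, $p\neq q$, let $I$ be the closed interval of parameters $t$ with $\gamma(t):=p+t(q-p)\in C$, and set $g(t):=f(\gamma(t))$. The chain rule gives $g\in\mathcal{C}^2(I)$ with $g''(t)=(q-p)^{\top}\mathrm{Hess}_f(\gamma(t))(q-p)$. Restricting $(1)$ to the segment $[p,q]$ says precisely that $g$ is convex on $I$; by the one-variable fact that a $\mathcal{C}^2$ function on an interval is convex if and only if its second derivative is everywhere nonnegative, this is equivalent to $g''\ge 0$ on $I$. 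Letting $p,q$ vary over all of $C$: if $\mathrm{Hess}_f$ is positive semidefinite everywhere then every such $g''\ge 0$, hence every restriction of $f$ to a segment is convex, hence $(1)$; conversely $(1)$ forces $g''\ge 0$ for all $p,q$, and fixing an interior point $x$ while letting $q-p$ run over all directions $v$ gives $v^{\top}\mathrm{Hess}_f(x)v\ge 0$, i.e. positive semidefiniteness of $\mathrm{Hess}_f(x)$ (a positive definite matrix being in particular positive semidefinite, so this matches $(3)$).

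The genuinely analytic ingredient is the one-variable lemma, which I would either cite from \cite{RRR} or dispatch in a line: $g$ is convex iff $g'$ is nondecreasing, and since $g\in\mathcal{C}^2$, $g'$ is nondecreasing iff $g''\ge 0$ (for the direction $g''\ge 0\Rightarrow$ convex one can also invoke Taylor's formula with integral remainder; for the converse the second-order difference quotient $\bigl(g(t+h)-2g(t)+g(t-h)\bigr)/h^{2}\to g''(t)$ does it). The only point requiring care — rather than a real obstacle — is the status of boundary points when $C$ is not open: the Hessian condition and the convexity of $g$ must be read on the \emph{closed} interval $I$, and one uses continuity of $\mathrm{Hess}_f$ to propagate positive semidefiniteness from $\mathrm{int}\,C$ to $\partial C$.
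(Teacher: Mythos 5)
Your proof is correct: the epigraph translation for $(1)\Leftrightarrow(2)$ and the reduction to the one-variable second-derivative test for $(1)\Leftrightarrow(3)$ are exactly the standard arguments, and your caveats (that positive definite is subsumed by positive semidefinite, and that boundary points are handled by continuity of the Hessian) are the right ones. Note that the paper does not prove this statement at all — it records it as a classical fact with a citation to \cite{RRR} — so there is no in-paper argument to compare against; your write-up simply supplies the standard textbook proof that the citation points to.
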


\begin{definition}
A $\mathcal{C}^2$ function $f:\mathbb{R}^2\rightarrow\mathbb{R}$ is \defi{convex}\index{convex function} if $f$ verifies one of the properties from Theorem \ref{th:convexProp}.
\end{definition}

\begin{theorem}\label{th:convexNondegenerated}
Let $f:\mathbb{R}^2\rightarrow \mathbb{R}$ be a polynomial function of the form $f(x,y)=ax^2+by^2+h(x,y),$ where $a,b\in\mathbb{R},$ $a>0,$ $b>0,$ $f(0,0)=0$ and $h:\mathbb{R}^2\rightarrow\mathbb{R}$ is a polynomial function of degree of each monomial at least $3$. Then for a sufficiently small $0<\varepsilon\ll 1$, $\mathcal{D}_\varepsilon$ is a convex set in a small enough neighbourhood of the origin.
\end{theorem}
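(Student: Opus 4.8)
The plan is to show that near the origin the function $f$ behaves, up to a higher-order perturbation, like the positive-definite quadratic form $ax^2+by^2$, and that this forces the Hessian of $f$ to be positive definite on a small disk, whence convexity of the sublevel set $\mathcal{D}_\varepsilon$ follows from Theorem \ref{th:convexProp}(3) together with the fact (to be invoked from Lemma \ref{lemma:origInInterior}) that for $0<\varepsilon\ll 1$ the set $\mathcal{D}_\varepsilon$ is a topological disk contained in $V$ and in any prescribed neighbourhood of $O$.

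First I would compute $\mathrm{Hess}_f(x,y)$ explicitly. Since $f(x,y)=ax^2+by^2+h(x,y)$ with every monomial of $h$ of degree at least $3$, we have
\[
\mathrm{Hess}_f(x,y)=\begin{pmatrix} 2a & 0 \\ 0 & 2b \end{pmatrix}+\mathrm{Hess}_h(x,y),
\]
and each entry of $\mathrm{Hess}_h$ is a polynomial all of whose monomials have degree at least $1$; in particular $\mathrm{Hess}_h(0,0)=0$ and, by continuity, there is a radius $r_0>0$ such that on the closed disk $\bar B(0,r_0)$ the operator norm of $\mathrm{Hess}_h(x,y)$ is smaller than $\min\{2a,2b\}$. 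A standard perturbation argument for symmetric matrices then gives that $\mathrm{Hess}_f(x,y)$ is positive definite for all $(x,y)\in \bar B(0,r_0)$: its eigenvalues lie within distance $\|\mathrm{Hess}_h(x,y)\|$ of $\{2a,2b\}$ and hence stay strictly positive. Thus $f$ restricted to the convex set $B(0,r_0)$ is a convex function in the sense of Theorem \ref{th:convexProp}.

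Next I would transfer this local convexity of $f$ to convexity of $\mathcal{D}_\varepsilon$. By Lemma \ref{lemma:origInInterior}, for $0<\varepsilon\ll 1$ the set $\mathcal{D}_\varepsilon$ is diffeomorphic to a closed disk, is contained in $V$, and shrinks to the origin as $\varepsilon\to 0$; in particular we may choose $\varepsilon$ small enough that $\mathcal{D}_\varepsilon\subset B(0,r_0)$. Now $\mathcal{D}_\varepsilon$ is the connected component containing $O$ of $\{f\le\varepsilon\}$; since $\mathcal{D}_\varepsilon\subset B(0,r_0)$, it coincides with the connected component containing $O$ of $\{(x,y)\in B(0,r_0)\mid f(x,y)\le\varepsilon\}$. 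The full sublevel set $\{(x,y)\in B(0,r_0)\mid f(x,y)\le\varepsilon\}$ is the intersection of the convex set $B(0,r_0)$ with a sublevel set of a convex function, hence convex, hence connected; therefore it equals $\mathcal{D}_\varepsilon$, which is thus convex.

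The main obstacle, such as it is, is the bookkeeping in this last step: one must make sure the component of the sublevel set genuinely stays inside the ball $B(0,r_0)$ where $f$ is convex, rather than escaping and coming back, and this is exactly what the preparatory Lemma \ref{lemma:origInInterior} (the origin lies in the interior of $\mathcal{D}_\varepsilon$, and $\mathcal{D}_\varepsilon$ is a small disk inside $V$) is there to guarantee. Everything else — the Hessian computation, the eigenvalue perturbation estimate, and the elementary fact that an intersection of a convex set with a convex sublevel set is convex — is routine. One should also remark that the Morse hypothesis enters only through $a>0,b>0$: the quadratic part $ax^2+by^2$ is a non-degenerate positive-definite form, i.e.\ the origin is a Morse strict local minimum, which is why the conclusion is sharp and why Coste's non-Morse counterexample in Section \ref{sect:costeGener} does not contradict it.
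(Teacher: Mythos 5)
Your proof is correct and follows essentially the same route as the paper: show that $\mathrm{Hess}_f$ is positive definite in a neighbourhood of the origin (you via an eigenvalue perturbation bound, the paper via $\det\mathrm{Hess}_f\to ab>0$ and Sylvester's criterion), conclude $f$ is convex there, and deduce convexity of $\mathcal{D}_\varepsilon$ as a sublevel set of a convex function on a convex domain — the paper phrases this last step as the projection of $\mathrm{epi}(f)\cap(z=\varepsilon)$, which is the same object. You are in fact a bit more careful than the paper on the final bookkeeping, explicitly identifying $\mathcal{D}_\varepsilon$ with $\{(x,y)\in B(0,r_0)\mid f(x,y)\le\varepsilon\}$ via Lemma~\ref{lemma:origInInterior} and connectedness of the convex sublevel set, a point the paper's proof passes over silently; you omit the paper's alternative second sketch via the Hessian curve and absence of inflection points, but that is an independent argument and not needed.
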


\begin{proof}
Note that we are situated in a small enough neighbourhood of the origin. We have $a>0$, $b>0$, $\lim_{(x,y)\rightarrow (0,0)}\mathrm{det}H_f(x,y)=ab>0,$ for any $(x,y)$ in a small enough neighbourhood of $(0,0)$. Thus the Hessian matrix is positive definite and we conclude that $f$ is a convex function, since we are in dimension $2$ and by Sylvester criterion if a symmetric matrix has all diagonal elements and all the leading principal determinants positive, then the matrix is a positive definite matrix (see \cite[page 45]{Gi}, or \cite{RRR}). Recall the fact that positive definite forms form an open set in the space of all forms. Since the function $f$ is $\mathcal{C}^2$, its Hessian form varies continuously. 

Therefore, the epigraph of $f$ restricted to a convex domain  is a convex set. Let us consider $0<\varepsilon\ll 1,$ sufficiently small. Since $\tilde{\mathcal{D}}_\varepsilon:=\mathrm{epi}(f)\cap (z=\varepsilon),$ we obtain that $\tilde{\mathcal{D}}_\varepsilon$ is a convex set, since it is the intersection of two convex sets. Thus, the projection of $\tilde{\mathcal{D}}_\varepsilon$ on the plane $xOy$ is also convex, in fact it is equal to $\mathcal{D}_\varepsilon$. 

Let us sketch a second proof. Since $\lim_{x,y\rightarrow (0,0)}\mathrm{det}H_f(x,y)=ab\neq 0,$ for any $(x,y)$ in a small enough neighbourhood of $(0,0)$, the Hessian curve of $f$ given by $\{(x,y)\in\mathbb{R}^2\mid \mathrm{det}H_f(x,y)=0\}$, is empty. Thus (see \cite[page 71]{Wa}) for a sufficiently small $0<\varepsilon\ll 1,$ the level curve $\mathcal{C}_\varepsilon$ has no inflexion points, hence it is convex.

\end{proof}

\begin{example}
The circular paraboloid $z=x^2+y^2$ (see in Figure \ref{fig:convexStrictMin} below): $\mathcal{C}_\varepsilon$ are circles.

\begin{figure}[H]
\begin{center}
\includegraphics[scale=0.22]{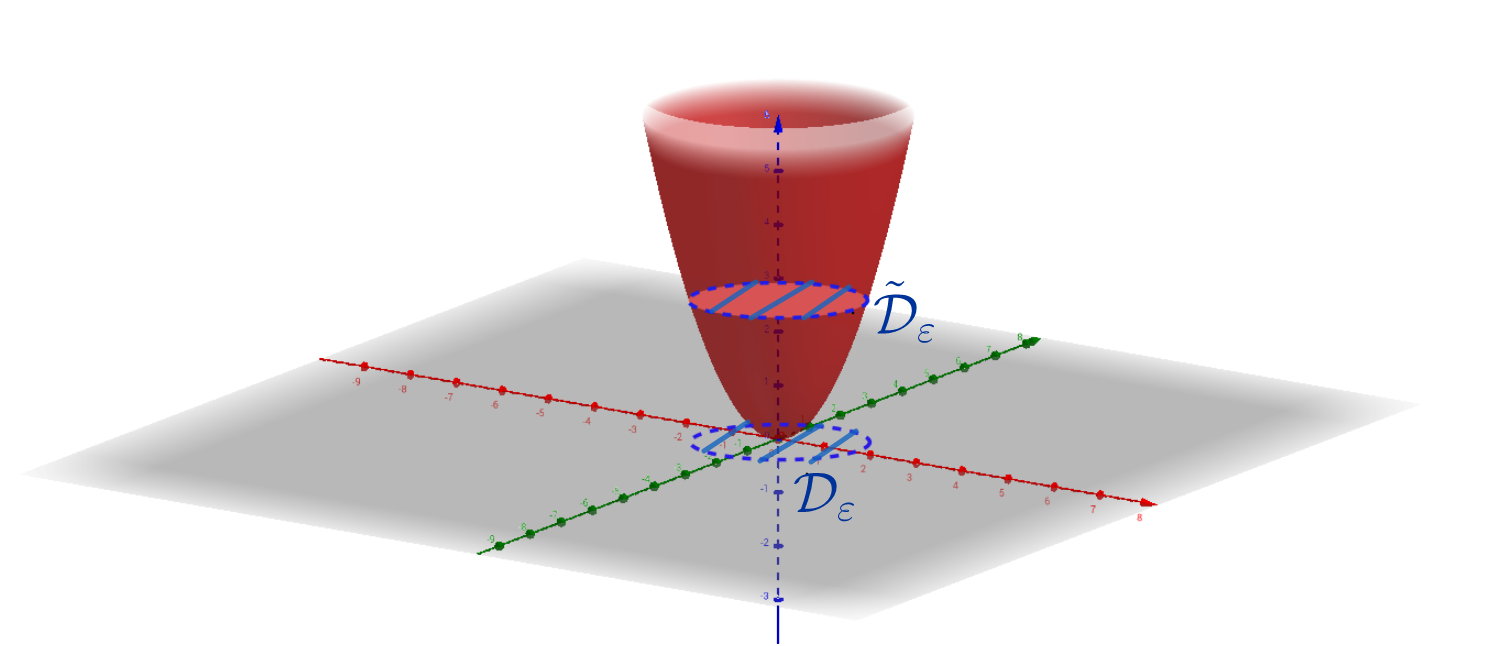} 
\end{center}
\caption{A non-degenerated (namely, Morse) critical point: we obtain convex disks.\label{fig:convexStrictMin}}

\end{figure}
\end{example}

\section{Non-convex level sets}\label{sect:costeGener}

The starting point of this research is the following question: 

\begin{question*}[E. Giroux to P. Popescu-Pampu, 2004]
Are the small enough level curves of $f$ near strict local minima always boundaries of convex disks (even for non-Morse singularities)? 
\end{question*}

\noindent \textsc{Answer}: The answer to this question is negative, as the following counterexample by M. Coste shows.

\begin{example}\label{ex:Coste}
Let us study the counterexample found by M. Coste: $$f(x,y):=x^2+(y^2-x)^2.$$
See Figure \ref{fig:bananaFamily} for the family of its level sets. Figure \ref{fig:banana3D} illustrates the graph of $f$ and Figure \ref{fig:bananaOneLevel} a single level $(f(x,y)=0.1).$

The idea used by M. Coste was that instead of taking the sum of squares of functions defining transverse curves like in $x^2+y^2$ (i.e. the lines defined by $x=0$ and $y=0$ are transverse), to take the sum of squares of $x$ and $y^2 - x$, which define two curves that are tangent at the origin.

Note that the function $f$ was also mentioned in \cite{JK}, \cite{Bo}.

\end{example}

\begin{figure}[H]
\centering
\begin{subfigure}[b]{0.45\textwidth}
\includegraphics[scale=0.3]{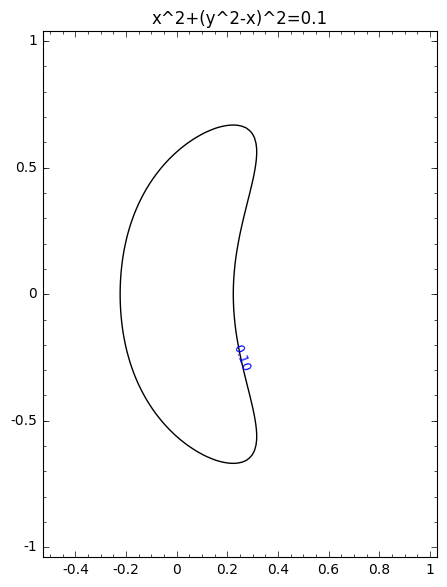} 
\caption{The level curve $(f(x,y)=0.1)$ in $\mathbb{R}^2$, where the function $f(x,y):=x^2+(y^2-x)^2$.}
\label{fig:bananaOneLevel}
\end{subfigure}
\begin{subfigure}[b]{0.45\textwidth}
\includegraphics[scale=0.3]{tmp_Bplcwq.png} 
\caption{A family of level curves in $\mathbb{R}^2$ of $f(x,y):=x^2+(y^2-x)^2$.}
\label{fig:bananaFamily}
\end{subfigure}
\caption{The counterexample provided by Coste.\label{fig:counterexC}}
\end{figure}

\begin{figure}[H]
\centering

\includegraphics[scale=0.185]{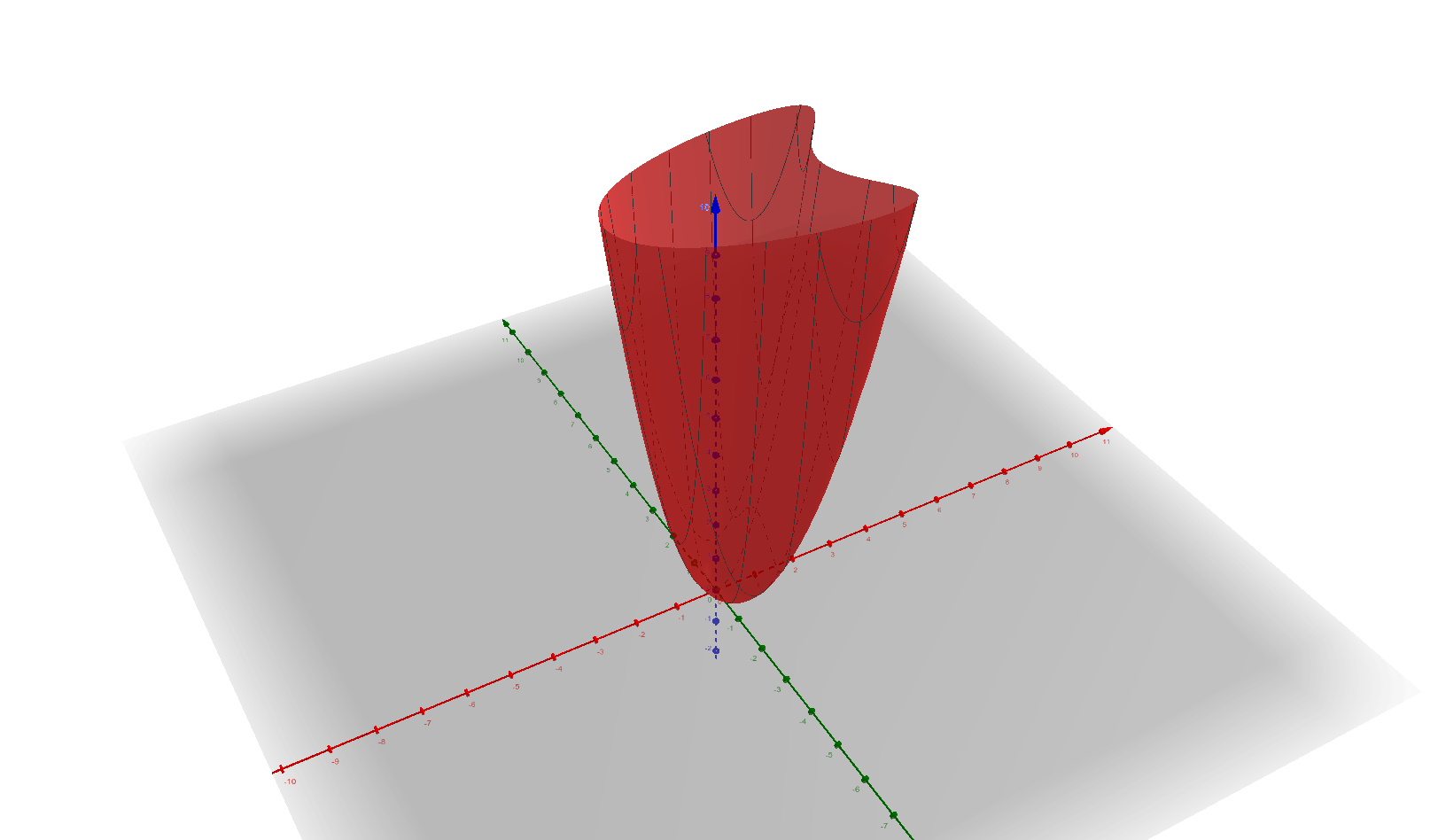} 
\caption{The graph of Coste's example in $\mathbb{R}^3$. \label{fig:banana3D}}
\end{figure}

\begin{proposition}\label{prop:NonConvex}

The topological disks $\mathcal{D}_{\varepsilon}$ from Example \ref{ex:Coste} are never convex for $\varepsilon>0$.
\end{proposition}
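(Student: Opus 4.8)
The plan is to work directly with the explicit level curve $\mathcal{C}_\varepsilon = (x^2 + (y^2-x)^2 = \varepsilon)$ and exhibit, for every $\varepsilon > 0$, a vertical line that meets $\mathcal{D}_\varepsilon$ in a disconnected set — contradicting convexity, since a convex set must have connected (hence interval) intersection with every line. The natural candidate is a vertical line $x = x_0$ with $x_0 > 0$ small. First I would solve the defining equation for $y$ at fixed $x = x_0$: writing $u = y^2$, the equation becomes $(u - x_0)^2 = \varepsilon - x_0^2$, so $u = x_0 \pm \sqrt{\varepsilon - x_0^2}$, which is meaningful precisely when $0 < x_0 \le \sqrt{\varepsilon}$. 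For such $x_0$, if moreover $x_0 - \sqrt{\varepsilon - x_0^2} > 0$ — equivalently $x_0^2 > \varepsilon - x_0^2$, i.e. $x_0 > \sqrt{\varepsilon/2}$ — then both values of $u$ are strictly positive, giving four distinct real values of $y$, namely $\pm\sqrt{x_0 + \sqrt{\varepsilon - x_0^2}}$ and $\pm\sqrt{x_0 - \sqrt{\varepsilon - x_0^2}}$.

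Choosing any $x_0$ in the nonempty interval $(\sqrt{\varepsilon/2},\sqrt{\varepsilon})$, these four ordinates split into two symmetric pairs $\pm y_1$ and $\pm y_2$ with $0 < y_2 < y_1$. The fibre $\Pi^{-1}(x_0)\cap\mathcal{D}_\varepsilon$ over $x_0$ consists of the points of the disk lying on the line $x = x_0$; since $f(x_0, 0) = x_0^2 + x_0^2 = 2x_0^2 > \varepsilon$ for our choice of $x_0$, the origin's height is exceeded, so the point $(x_0, 0)$ is \emph{not} in $\mathcal{D}_\varepsilon$, whereas the two boundary arcs force $(x_0, y)$ to lie in $\mathcal{D}_\varepsilon$ for $y$ near $\pm y_1$ and near $\pm y_2$. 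Concretely, the slice $\{y : f(x_0,y)\le\varepsilon\}$ equals $[-y_1,-y_2]\cup[y_2,y_1]$, which is disconnected. Hence $\mathcal{D}_\varepsilon$ is not convex.

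The one point requiring a little care is confirming that this disconnected slice genuinely belongs to the connected component $\mathcal{D}_\varepsilon$ of $f^{-1}([0,\varepsilon])$ containing the origin, rather than to some other component of the sublevel set. I would handle this by noting that $f(x,y) = x^2 + (y^2-x)^2$ is a sum of squares vanishing only at the origin, so $f^{-1}([0,\varepsilon])$ is bounded, and in fact one checks it is connected for every $\varepsilon > 0$: any point $(x,y)$ with $f(x,y)\le\varepsilon$ can be joined to the origin by first moving along the curve $x = y^2$ (on which $f = x^2 = y^4$ decreases to $0$) after sliding $x$ monotonically toward $y^2$, keeping $f$ bounded by $\varepsilon$. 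Thus $\mathcal{D}_\varepsilon$ is the whole sublevel set, and the disconnected vertical slice computed above lies in it. This connectedness verification, though elementary, is the main thing to get right; once it is in place, the explicit four-root computation finishes the proof for all $\varepsilon > 0$ uniformly.
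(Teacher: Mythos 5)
Your argument is essentially the paper's, recast: the paper takes the two points $P_\varepsilon=(\sqrt{\varepsilon},\varepsilon^{1/4})$, $N_\varepsilon=(\sqrt{\varepsilon},-\varepsilon^{1/4})$ on the parabola $y^2=x$ and observes that their midpoint $(\sqrt{\varepsilon},0)$ lies outside $\mathcal{D}_\varepsilon$ because $f(x,0)=2x^2>\varepsilon$ there; you say the same thing in the language of a disconnected vertical slice over some $x_0\in(\sqrt{\varepsilon/2},\sqrt{\varepsilon})$. Your four-root computation is correct, and you are right that the one point needing care --- and which the paper glosses over --- is that the two arcs of the slice actually lie in the component $\mathcal{D}_\varepsilon$ of $f^{-1}([0,\varepsilon])$ containing the origin.

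However, the path you propose for that verification fails. Sliding $x$ toward $y^2$ at fixed $y$ does \emph{not} keep $f\le\varepsilon$: the function $x\mapsto f(x,y)=x^2+(y^2-x)^2$ is a convex parabola minimised at $x=y^2/2$, not at $x=y^2$, and its value at $x=y^2$ is $y^4$. On the sublevel set one only has $y^2\le x+\sqrt{\varepsilon-x^2}\le\sqrt{2\varepsilon}$, so $y^4$ can be as large as $2\varepsilon$; e.g.\ for the boundary point $\bigl(\sqrt{\varepsilon/2},(2\varepsilon)^{1/4}\bigr)$ the target $x=y^2=\sqrt{2\varepsilon}$ already violates $x^2\le\varepsilon$, so the path leaves $f^{-1}([0,\varepsilon])$. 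The repair is immediate: slide $x$ to $y^2/2$ instead (monotone, $f$ non-increasing by convexity in $x$), where $f=y^4/2\le\varepsilon$, and then follow the arc $t\mapsto(t^2/2,t)$, on which $f=t^4/2$ decreases to $0$, back to the origin. Alternatively you can avoid proving connectedness of the whole sublevel set: the points $(x_0,\pm\sqrt{x_0})$ lie one in each component of your slice (since $y_2^2<x_0<y_1^2$) and are joined to the origin inside $f^{-1}([0,\varepsilon])$ by the arc $t\mapsto(t^2,t)$, $|t|\le\varepsilon^{1/4}$, on which $f=t^4\le\varepsilon$. With either patch the proof is complete and, like the paper's, works uniformly for all $\varepsilon>0$.
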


\begin{proof}
Indeed, if we denote by $M_\varepsilon
:=\mathcal{C}_\varepsilon\cap \{x>0\mid y=0\}$, by $P_\varepsilon:=
\mathcal{C}_\varepsilon \cap \{y>0\mid y^2-x=0\}$ and by $N_\varepsilon:=
\mathcal{C}_\varepsilon \cap \{y<0\mid y^2-x=0\}$, then we get $x_{M_\varepsilon}=
\sqrt{\frac{\varepsilon}{2}}$,
$y_{M_\varepsilon}=0$, 
$x_{P_\varepsilon}=
\sqrt{\varepsilon}$, 
$y_{P_\varepsilon}=\sqrt[4]{\varepsilon},$ and $x_{N_\varepsilon}=
\sqrt{\varepsilon}$, 
$y_{N_\varepsilon}=-\sqrt[4]{\varepsilon}$ (see Figure \ref{fig:nonconvex}).

\begin{figure}[H]
\centering
\includegraphics[scale=0.15]{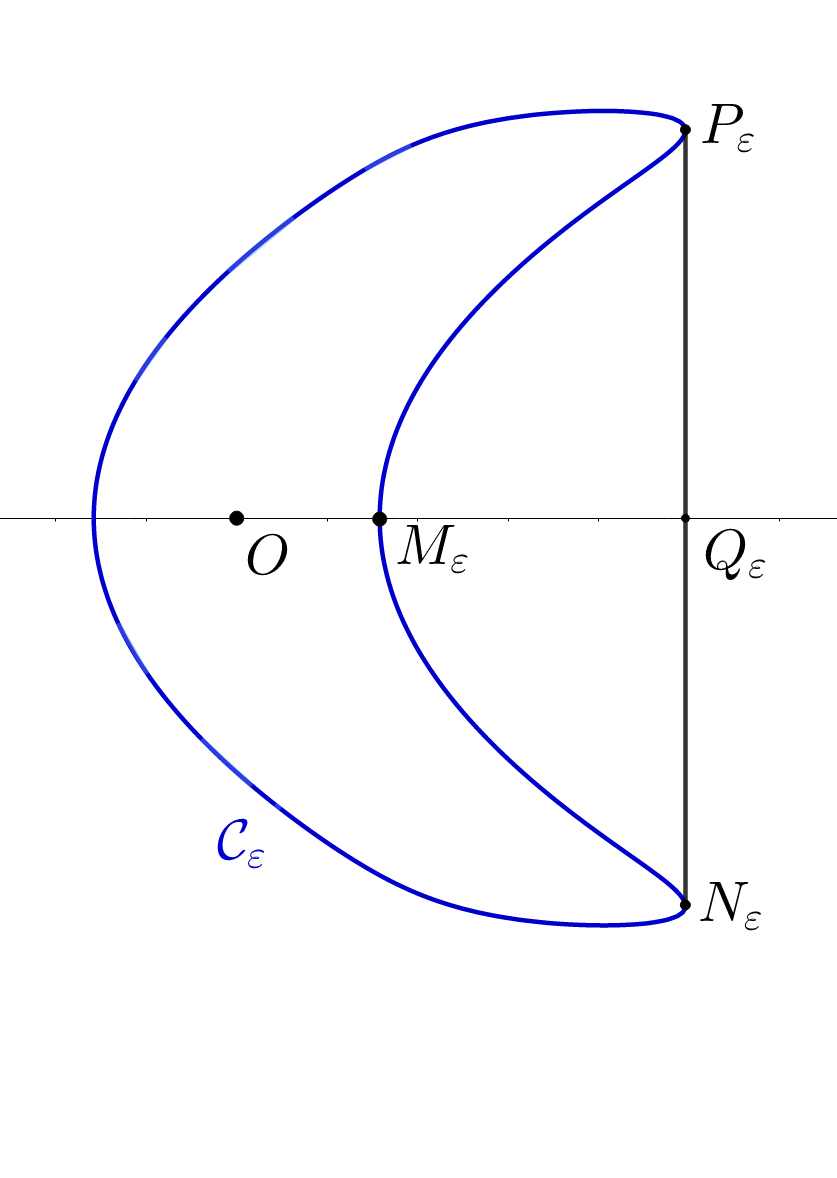} 
\caption{Coste's example is non-convex.\label{fig:nonconvex}}
\end{figure}

Denote by $Q_\varepsilon$ the midpoint of $[N_\varepsilon P_\varepsilon]$. Then $x_{Q_\varepsilon}=\sqrt{\varepsilon}>x_{M_\varepsilon}$. Since $y_{M_\varepsilon}=y_{Q_\varepsilon}=0$ and there is no other point to the right of $M_\varepsilon$, we conclude that $Q_\varepsilon\not \in \mathcal{D}_\varepsilon.$ Hence there exist the points $N_\varepsilon\in \mathcal{D}_\varepsilon$ and $P_\varepsilon\in \mathcal{D}_\varepsilon$ such that the segment $[N_\varepsilon P_\varepsilon]$ is not included in $\mathcal{D}_\varepsilon.$ In conclusion, we proved that there exists at least a point outside the disk $\mathcal{D}_\varepsilon$ on the segment $[N_\varepsilon P_\varepsilon]$.
\end{proof}

\subsection{Generalisations}
In the sequel we give some new examples of functions $f:\bR^2\rightarrow \bR$ with a strict local minimum at the origin $(0,0)$, whose level sets $(f(x,y)=\varepsilon)$ are all boundaries of non-convex disks for a sufficiently small $0<\varepsilon\ll 1$.

The shape of these level curves $\mathcal{C}_\varepsilon$ stabilises for sufficiently small $0<\varepsilon\ll 1$, as we shall prove in this paper.

\begin{example}\label{ex:unu}

Let us take the following polynomial:
$$f(x,y):=x^{16}+(y^2+x)^2 (y^2-x)^2.$$
For a family of level curves of $f$, see Figure \ref{fig:boneLevelsSets}.

\begin{figure}[H]
\centering
\includegraphics[scale=0.4]{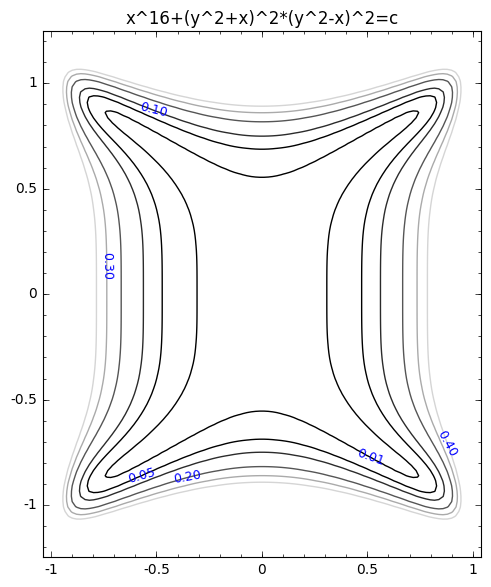} 
\caption{A family of level curves in $\mathbb{R}^2$ of $f(x,y):=x^{16}+(y^2+x)^2 (y^2-x)^2$.\label{fig:boneLevelsSets}}
\end{figure}

\end{example} 
  
\begin{example}\label{ex:doi}
Let us consider the following polynomial:
 $$f(x,y):=x^{6}+(y^4+y^2-x)^2 (y^2-x)^2.$$  
The shape of its level curves is sketched in Figure \ref{fig:doubleBananaSketch}.

\begin{figure}[H]
\centering
\includegraphics[scale=0.2]{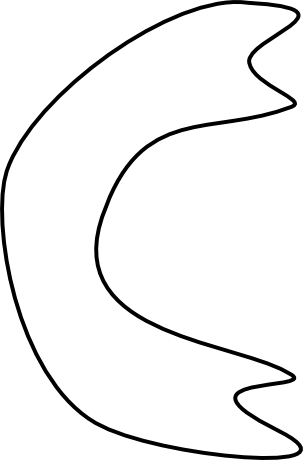} 
\caption{The sketch of a level curve $(f=\varepsilon),$ $f(x,y):=x^{6}+(y^4+y^2-x)^2 (y^2-x)^2$.\label{fig:doubleBananaSketch}}
\end{figure}

\end{example}

\subsection{Star domains}
The following definition is well-known:
\begin{definition}\cite[page 168]{CB}
A set $S$ in the Euclidean space $\bR^n$ is called a \defi{star domain with respect to $x_0\in S$}\index{star domain with respect to a point} if for all $x\in S,$ the line segment from $x_{0}$ to $x$ is in $S$. 
\end{definition}

\begin{proposition}\label{prop:nonStar}
The polynomial function $f:\bR^2\rightarrow\bR$ $$f(x,y):=x^{2p}+(y^2-x)^2$$ for  $p\geq 3$, has the following properties: $O$ is a strict local minimum and for sufficiently small $0<\varepsilon\ll 1$,  
the set $\mathcal{D}_\varepsilon:=f^{-1}([0,\varepsilon])$ is not a star domain with respect to any point   $P\in\mathcal{D}_\varepsilon$.
\end{proposition}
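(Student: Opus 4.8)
The plan is to show that $\mathcal{D}_\varepsilon$ is not a star domain by exhibiting, for any candidate center $P \in \mathcal{D}_\varepsilon$, a point $Q \in \mathcal{D}_\varepsilon$ such that the segment $[PQ]$ leaves $\mathcal{D}_\varepsilon$. The key structural feature is that the level curve $\mathcal{C}_\varepsilon = (x^{2p}+(y^2-x)^2 = \varepsilon)$ is very flat in the $x$-direction: since $x^{2p} \le \varepsilon$ forces $|x| \le \varepsilon^{1/(2p)}$, the whole disk $\mathcal{D}_\varepsilon$ lives in the vertical strip $|x| \le \varepsilon^{1/(2p)}$, whereas in the $y$-direction the term $(y^2-x)^2 \le \varepsilon$ only forces $y^2 \le x + \sqrt{\varepsilon} \le \varepsilon^{1/(2p)} + \sqrt{\varepsilon}$, so $|y|$ is of order $\varepsilon^{1/(4p)}$, which is much larger than $\varepsilon^{1/(2p)}$ for $p \ge 3$ and $\varepsilon$ small. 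Geometrically the disk is a thin "banana" hugging the parabola $x = y^2$, pinched near the origin.

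First I would record the basic inclusions: $\mathcal{D}_\varepsilon \subset \{|x| \le \varepsilon^{1/(2p)}\}$, and for a point $(x,y) \in \mathcal{D}_\varepsilon$ we have $|y^2 - x| \le \sqrt{\varepsilon}$. Next I would locate the two "tips" of the banana analogous to the points $P_\varepsilon, N_\varepsilon$ in Proposition~\ref{prop:NonConvex}: namely the points of $\mathcal{C}_\varepsilon$ on the parabola $y^2 = x$ with $y>0$ and $y<0$. On that parabola $f = x^{2p} = \varepsilon$, so $x = \varepsilon^{1/(2p)}$ and $y = \pm \varepsilon^{1/(4p)}$; call these $A_\varepsilon^+ = (\varepsilon^{1/(2p)}, \varepsilon^{1/(4p)})$ and $A_\varepsilon^- = (\varepsilon^{1/(2p)}, -\varepsilon^{1/(4p)})$. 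Both lie in $\mathcal{D}_\varepsilon$. Now take $Q_\varepsilon$ to be the midpoint of $[A_\varepsilon^+ A_\varepsilon^-]$, which is $(\varepsilon^{1/(2p)}, 0)$. I claim $Q_\varepsilon \notin \mathcal{D}_\varepsilon$: indeed $f(\varepsilon^{1/(2p)}, 0) = \varepsilon + \varepsilon = 2\varepsilon > \varepsilon$ (using $(0 - \varepsilon^{1/(2p)})^2 = \varepsilon^{1/p}$... here one must be careful: $(y^2-x)^2$ at $(\varepsilon^{1/(2p)},0)$ equals $\varepsilon^{1/p}$, and $x^{2p} = \varepsilon$, so $f = \varepsilon + \varepsilon^{1/p}$; since $1/p < 1$ and $\varepsilon < 1$, we get $\varepsilon^{1/p} > \varepsilon$, hence $f(Q_\varepsilon) > \varepsilon$ and so $Q_\varepsilon \notin f^{-1}([0,\varepsilon]) \supset \mathcal{D}_\varepsilon$). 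This already shows $\mathcal{D}_\varepsilon$ is non-convex, but for the star-domain statement I need more: I must rule out \emph{every} center $P$.

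To handle an arbitrary center $P = (x_P, y_P) \in \mathcal{D}_\varepsilon$, I would split into cases according to the sign of $y_P$. If $y_P \ge 0$, consider the segment from $P$ to $A_\varepsilon^-$; I want to show this segment passes through a point with $f > \varepsilon$. The natural obstruction region is a neighbourhood of the axis point $(\varepsilon^{1/(2p)}, 0)$ — more robustly, the region $R = \{(x,y) : x \ge \tfrac{1}{2}\varepsilon^{1/(2p)},\ |y| \le \tfrac{1}{2}\varepsilon^{1/(4p)}\}$, on which one checks $f(x,y) = x^{2p} + (y^2-x)^2 \ge (y^2 - x)^2 \ge (x - y^2)^2$, and since $x \ge \tfrac{1}{2}\varepsilon^{1/(2p)}$ while $y^2 \le \tfrac{1}{4}\varepsilon^{1/(2p)}$, we get $x - y^2 \ge \tfrac14 \varepsilon^{1/(2p)}$, hence $f \ge \tfrac{1}{16}\varepsilon^{1/p} > \varepsilon$ for $\varepsilon$ small (as $1/p<1$). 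So $\mathcal{D}_\varepsilon$ is disjoint from $R$, i.e. $\mathcal{D}_\varepsilon$ separates into the "$x$ small" part and cannot contain points of $R$. The point is then that $A_\varepsilon^+$ (with $y$-coordinate $+\varepsilon^{1/(4p)} > \tfrac12\varepsilon^{1/(4p)}$) and $A_\varepsilon^-$ lie on opposite sides of the slab $\{|y| \le \tfrac12 \varepsilon^{1/(4p)}\}$ but both have $x$-coordinate $= \varepsilon^{1/(2p)} \ge \tfrac12\varepsilon^{1/(2p)}$; any center $P$ must, by a continuity/connectedness argument, fail to "see" at least one of $A_\varepsilon^{\pm}$, because the segment joining it to that tip would have to cross $R$. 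Concretely: if $x_P < \tfrac12 \varepsilon^{1/(2p)}$, then since both $A_\varepsilon^\pm$ have $x$-coordinate $\varepsilon^{1/(2p)}$, each segment $[P A_\varepsilon^\pm]$ crosses the vertical line $x = \tfrac34\varepsilon^{1/(2p)}$ at a point whose $y$-coordinate is a convex combination weighted heavily toward $y_P$... I need to check this crossing point has $|y|$ small; and if $x_P \ge \tfrac12\varepsilon^{1/(2p)}$ then $P$ itself is nearly in $R$ unless $|y_P|$ is large, in which case $P$ is close to one tip and cannot see the other.

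The main obstacle, and the step requiring the most care, is exactly this last quantitative case analysis: showing that for \emph{no} choice of $P$ can both tips $A_\varepsilon^\pm$ be seen, which amounts to a careful interval-arithmetic estimate comparing the scales $\varepsilon^{1/(2p)}$ (width) and $\varepsilon^{1/(4p)}$ (height) and verifying that the forbidden rectangle $R$ genuinely blocks all sightlines. I expect the cleanest route is: (i) prove $\mathcal{D}_\varepsilon \cap R = \varnothing$ as above; (ii) prove $A_\varepsilon^+, A_\varepsilon^- \in \mathcal{D}_\varepsilon$; (iii) observe $\mathcal{D}_\varepsilon$ is contained in the strip $|x| \le \varepsilon^{1/(2p)}$, so that the rightmost part of $\mathcal{D}_\varepsilon$ near $x = \varepsilon^{1/(2p)}$ is split by $R$ into an upper arc (containing $A_\varepsilon^+$) and a lower arc (containing $A_\varepsilon^-$), with no point of $\mathcal{D}_\varepsilon$ of the form $(x, y)$ with $x$ close to $\varepsilon^{1/(2p)}$ and $|y| \le \tfrac12 \varepsilon^{1/(4p)}$; (iv) conclude that a segment from any $P \in \mathcal{D}_\varepsilon$ to whichever of $A_\varepsilon^\pm$ lies in the opposite half must exit $\mathcal{D}_\varepsilon$ — here one uses that the segment, to get from the $x$-strip interior out to $x = \varepsilon^{1/(2p)}$, and across the $y$-slab, must pass through $R$. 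Making (iv) rigorous is where one must be most careful about degenerate positions of $P$ (e.g. $P$ very close to one of the tips), and the exponent inequality $1/p < 1$ for $p \ge 3$ (indeed $p \ge 2$ suffices for this part, but $p\ge 3$ gives extra room) is what drives all the estimates.
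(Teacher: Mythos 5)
Your geometric picture (the thin \enquote{banana} along $x=y^2$, the tips $A_\varepsilon^{\pm}=(\varepsilon^{1/(2p)},\pm\varepsilon^{1/(4p)})$, and the comparison of the scales $\varepsilon^{1/(2p)}$ and $\varepsilon^{1/(4p)}$) matches the paper's, and your non-convexity argument via the midpoint of $[A_\varepsilon^+A_\varepsilon^-]$ is correct. But the star-domain part has a genuine gap, which you flag yourself: step (iv), ruling out \emph{every} centre $P$, is never carried out. Moreover, the specific blocking rectangle $R=\{x\ge \tfrac12\varepsilon^{1/(2p)},\ |y|\le\tfrac12\varepsilon^{1/(4p)}\}$ does not in fact block all sightlines. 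Take $P=(x_P,0)$ with, say, $x_P=-\tfrac12\sqrt{\varepsilon}$; then $f(P)=2^{-2p}\varepsilon^{p}+\tfrac14\varepsilon\le\varepsilon$, so $P\in\mathcal{D}_\varepsilon$. Along the segment $[P\,A_\varepsilon^-]$ one has $y(t)=-t\varepsilon^{1/(4p)}$, so the segment lies in the slab $|y|\le\tfrac12\varepsilon^{1/(4p)}$ exactly for $t\le\tfrac12$, where its abscissa is at most $\tfrac12(x_P+\varepsilon^{1/(2p)})<\tfrac12\varepsilon^{1/(2p)}$; for $t>\tfrac12$ it has already left the slab for good. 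Hence the segment misses $R$ entirely. (It does still exit $\mathcal{D}_\varepsilon$ --- its midpoint satisfies $f>\varepsilon$ --- but your $R$ does not witness this; you would need to push the left edge of $R$ leftward by roughly $\sqrt{\varepsilon}$ and redo the estimate, and even the $x_P=0$ case only touches a corner of $R$, so the bound must be checked on the closed rectangle.)

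The paper sidesteps this entire case analysis with a symmetry reduction that your proposal is missing: Lemma \ref{lemma:axaSim} shows that a planar star domain admitting a symmetry axis $\Delta$ is a star domain with respect to some point of $\Delta$. Since $(y=0)$ is a symmetry axis of $\mathcal{D}_\varepsilon$, it suffices to rule out centres $B=(x_B,0)$ on the axis; from $f(B)\le\varepsilon$ one gets $|x_B|<\sqrt{\varepsilon}$, and a direct evaluation shows $f>\varepsilon$ at the midpoint of $[B\,A_\varepsilon^+]$. I recommend adopting that reduction: it collapses your hardest step to a one-parameter computation and removes the need for any blocking region.
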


Before the proof, let us first present the following Lemma:
\begin{lemma}\label{lemma:axaSim}
If a set $\mathcal{D}\subset \mathbb{R}^2$ is a star domain with respect to a point and if $\mathcal{D}$ admits a symmetry axis, say $\Delta$ (see Figure \ref{fig:axaSim}), then there exists a point $P\in \mathcal{D}\cap \Delta$ such that $\mathcal{D}$ is a star domain with respect to $P$.
\end{lemma}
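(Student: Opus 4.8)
The plan is to show that the set of "star centers" of $\mathcal{D}$ — called the kernel of $\mathcal{D}$ — is symmetric with respect to $\Delta$, and then to exploit convexity of the kernel to produce a center lying on the axis. First I would recall (or prove directly) the classical fact that the kernel of any set $S\subset\mathbb{R}^2$, namely $\ker(S):=\{x_0\in S\mid [x_0,x]\subset S \text{ for all } x\in S\}$, is a convex set: if $x_0,x_1\in\ker(S)$ and $x\in S$, then every point of $[x_0,x_1]$ sees $x$ because the triangle with vertices $x_0,x_1,x$ is covered by the segments $[x_0,x]$ and the segments from points of $[x_1,x]$, all of which lie in $S$; a short argument with the intermediate point shows $[z,x]\subset S$ for $z\in[x_0,x_1]$. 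By hypothesis $\ker(\mathcal{D})\neq\emptyset$, so it is a nonempty convex subset of $\mathbb{R}^2$.

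Next I would show that $\ker(\mathcal{D})$ is invariant under the reflection $\sigma$ across $\Delta$. This is immediate from the definition: since $\sigma(\mathcal{D})=\mathcal{D}$ and $\sigma$ is an isometry sending segments to segments, if $x_0\in\ker(\mathcal{D})$ then for any $x\in\mathcal{D}$ we have $x=\sigma(x')$ with $x'\in\mathcal{D}$, and $[\sigma(x_0),x]=\sigma([x_0,x'])\subset\sigma(\mathcal{D})=\mathcal{D}$; hence $\sigma(x_0)\in\ker(\mathcal{D})$. Therefore $\ker(\mathcal{D})$ is a nonempty convex set stable under $\sigma$.

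Finally I would conclude: pick any $P_0\in\ker(\mathcal{D})$. Then $\sigma(P_0)\in\ker(\mathcal{D})$ as well, and by convexity the midpoint $P:=\tfrac12\bigl(P_0+\sigma(P_0)\bigr)$ lies in $\ker(\mathcal{D})$. But the midpoint of a point and its mirror image across $\Delta$ lies on $\Delta$, so $P\in\ker(\mathcal{D})\cap\Delta$; in particular $P\in\mathcal{D}\cap\Delta$ and $\mathcal{D}$ is a star domain with respect to $P$. (If $P_0$ already lies on $\Delta$ one simply takes $P=P_0$.)

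The only genuinely nontrivial ingredient is the convexity of the kernel; I expect that to be the main point requiring care, though it is a standard fact and the planar case admits the elementary triangle argument sketched above. The symmetry and midpoint steps are then formal. An alternative, slightly slicker route avoids naming the kernel at all: take any star center $P_0$, reflect it to get a second star center $\sigma(P_0)$, and check by hand that every point of the segment $[P_0,\sigma(P_0)]$ — in particular its midpoint on $\Delta$ — is again a star center, using that each point of $\mathcal{D}$ is seen from both $P_0$ and $\sigma(P_0)$. I would present whichever version keeps the exposition shortest.
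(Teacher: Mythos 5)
Your proof is correct and is essentially the paper's argument: the paper takes a star center $P_1$, reflects it across $\Delta$ to get a second star center $P_2$, and shows via the triangle $\bigtriangleup MP_1P_2$ (covered by the segments $]P_1X[$ for $X\in\,]P_2M[$) that every point of $[P_1P_2]$, in particular the midpoint on $\Delta$, is again a star center --- which is precisely the ``alternative slicker route'' you sketch at the end, and also the content of your kernel-convexity step. Packaging it as ``the kernel is convex and $\sigma$-invariant'' is a harmless reformulation of the same idea.
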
 

\vspace{-\baselineskip}
\begin{figure}[H]
\centering
\includegraphics[scale=0.2]{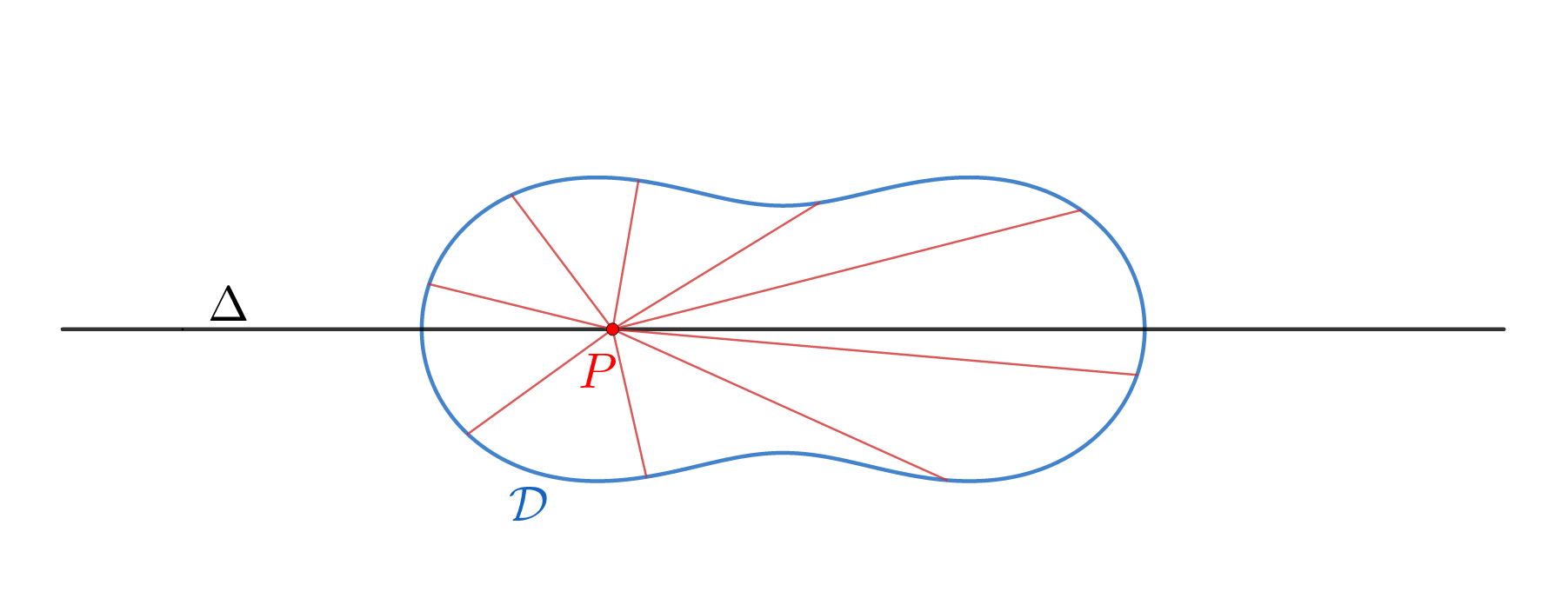} 
\caption{A star domain $\mathcal{D}$ with a symmetry axis $\Delta$.\label{fig:axaSim}}
\end{figure}

\begin{proof}
By hypothesis, the given set $\mathcal{D}\subset \mathbb{R}^2$ is a star domain with respect to a point. Denote this point by $P_1$. If $P_1\in \Delta$, there is nothing to prove. Let us consider $P_1\not\in \Delta$, as pictured in Figure \ref{fig:centruSim2}.

\begin{figure}[H]
\centering
\includegraphics[scale=0.2]{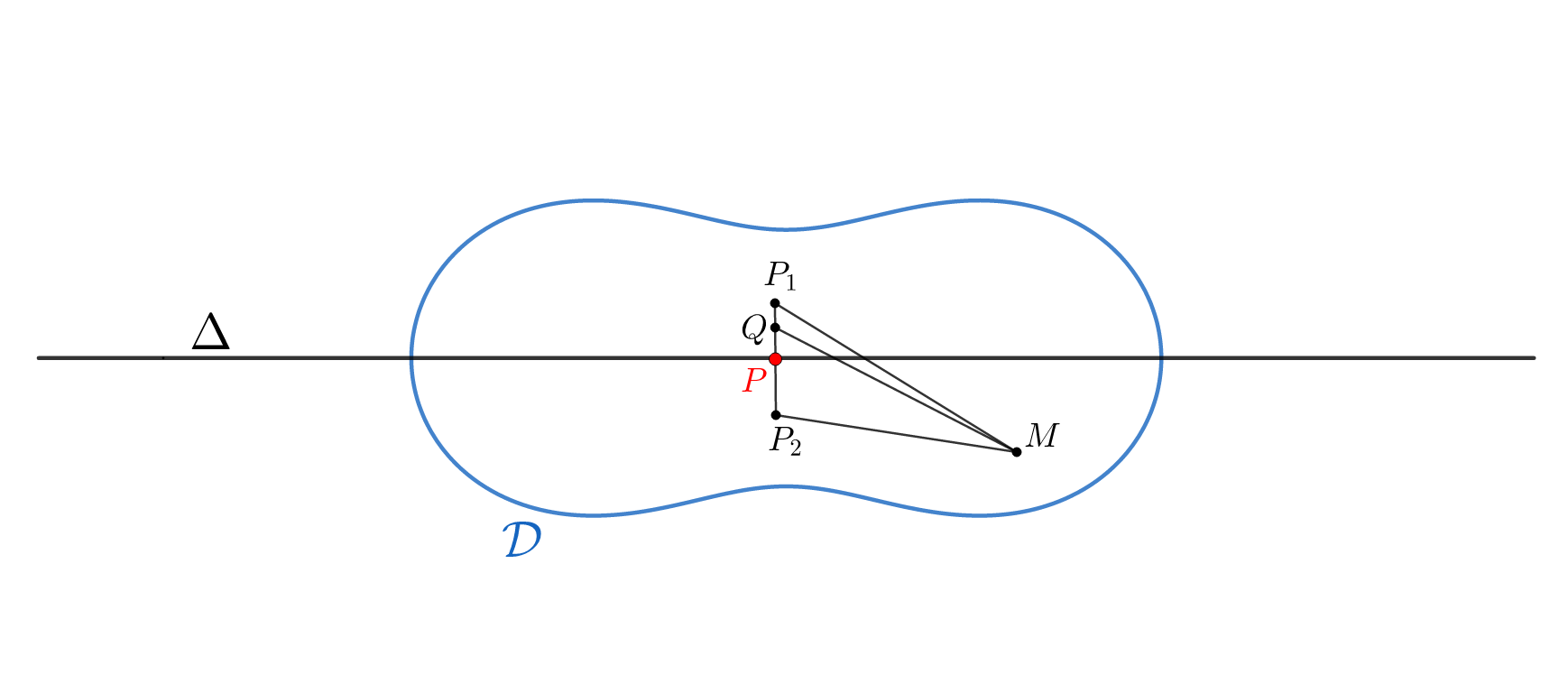} 
\caption{There exists $P\in \mathcal{D}\cap\Delta$ such that $\mathcal{D}$ is a star domain with respect to $P$.\label{fig:centruSim2}}
\end{figure}

By hypothesis, $\Delta$ is a symmetry axis for $\mathcal{D}.$ Let us denote by $P_2\in\mathcal{D}$ the symmetric of $P_1$ with respect to $\Delta.$ Denote by $P$ the midpoint of the segment $[P_1 P_2]$. Hence, $P\in\Delta$ and by symmetry, $\mathcal{D}$ is a star domain also with respect to $P_2.$ Let us now prove that $\mathcal{D}$ is a star domain with respect to any point $Q\in[P_1 P_2],$ hence $\mathcal{D}$ is a star domain with respect to the point $P$.

Let $M\in\mathcal{D}$. Since $\mathcal{D}$ is a star domain with respect to both $P_1$ and $P_2,$ we have that $[MP_1]$, $[MP_2]$ and $[P_1 P_2]$ are included in $\mathcal{D}$. Thus, both the triangle $\bigtriangleup MP_1 P_2$ and its interior are included in $\mathcal{D},$ since the interior is the union of $]P_1 X[$, for all $X\in ]P_2 M[$. In particular, for any point $Q\in[P_1 P_2],$ we obtain $[QM]\subset\mathcal{D}.$ Namely, $\mathcal{D}$ is a star domain with respect to any point $Q\in[P_1 P_2].$ In conclusion, $\mathcal{D}$ is a star domain with respect to $P.$ 
\end{proof}

In the following let us present the proof of Proposition \ref{prop:nonStar}.

\begin{proof}

By Lemma \ref{lemma:axaSim}, it is sufficient to prove that $\mathcal{D}_\varepsilon$ is not a star domain with respect to any point $B(x_B,0)$ on $(y=0)\cap \mathcal{D}_\varepsilon,$ such that $f(B)\leq\varepsilon$, because the line $\Delta:=(y=0)$ is a symmetry axis for $\mathcal{D}_\varepsilon.$

\begin{figure}[H]
\centering
\includegraphics[scale=0.18,trim={0 6cm 0 2cm}, clip]{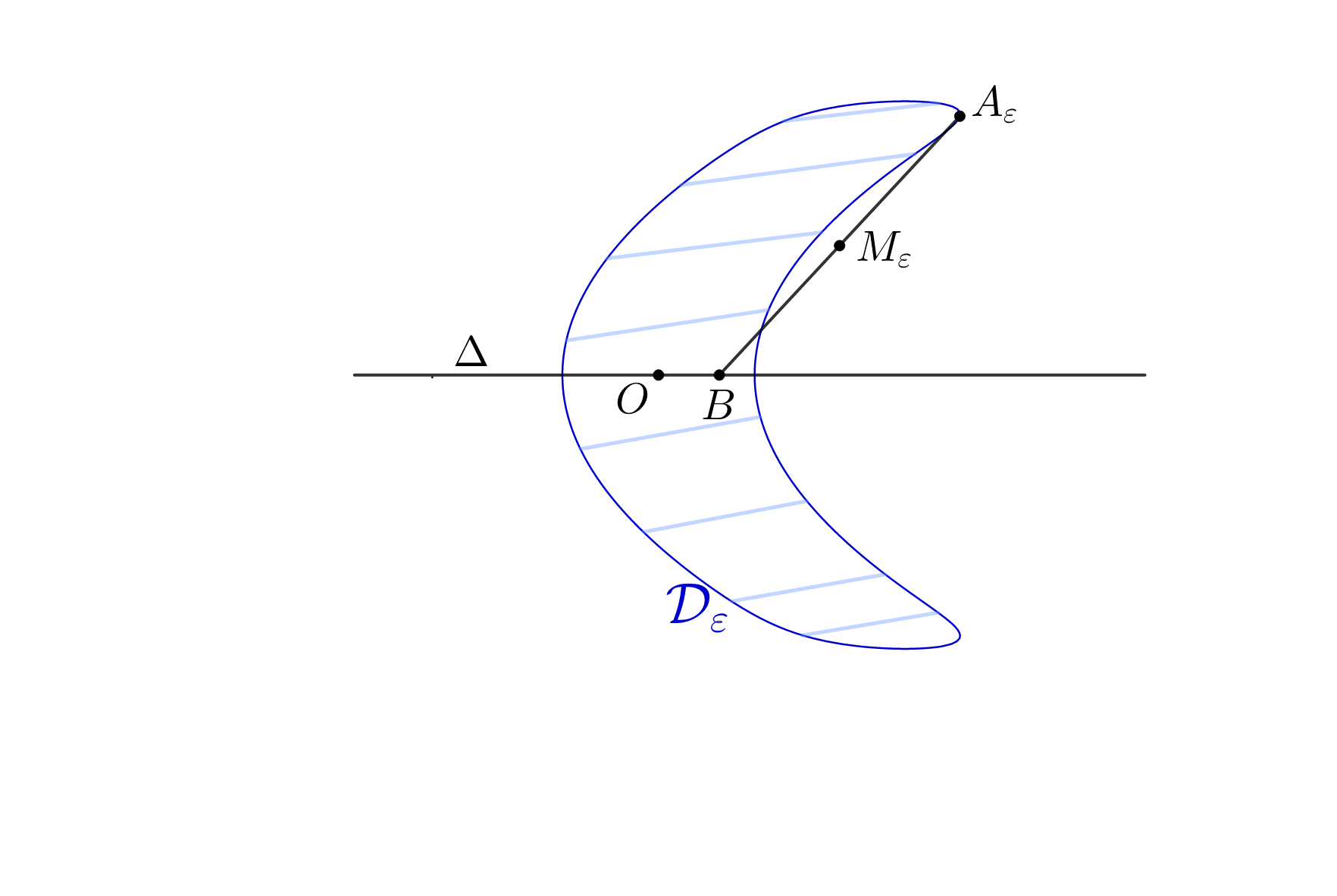} 
\caption{Example of a non-star domain.\label{fig:nonstarDomain}}
\end{figure}

Consider the point $A_\varepsilon\left (\varepsilon^{\frac{1}{2p}},\varepsilon^{\frac{1}{4p}}\right )\in (y^2-x=0)\cap (f=\varepsilon),$ see Figure \ref{fig:nonstarDomain}. Then the midpoint of the segment $[BA_\varepsilon]$ (see Figure \ref{fig:nonstarDomain} below) is the point $$M_\varepsilon\left (\frac{\varepsilon^{\frac{1}{2p}}+x_B}{2},\frac{\varepsilon^{\frac{1}{4p}}}{2}\right ).$$ We will prove that $M_\varepsilon\not \in \mathcal{D}_\varepsilon.$

Since $f(B)\leq\varepsilon,$ we have $x_B^{2p}+x_B^2\leq\varepsilon.$ For sufficiently small $0<\varepsilon\ll 1$, we have $x_B$ sufficiently small, thus $x_B^{2p}<x_B^2.$ Hence $\vert x_B\vert<\sqrt{\varepsilon}.$

We have $f(M_\varepsilon)-\varepsilon>(\frac{1}{4}\varepsilon^{\frac{1}{2p}}-\frac{1}{2}(\varepsilon^{\frac{1}{2p}}+x_B))^2-\varepsilon=\frac{1}{16}\varepsilon^{\frac{1}{p}}+\frac{1}{2}\varepsilon^{\frac{1}{2p}}x_B+\frac{1}{4}x_B^2-\varepsilon$.

There are two cases to consider. 

$\bullet$ First, if $x_B\geq 0$, then we have $f(M_\varepsilon)-\varepsilon \geq\frac{1}{16}\varepsilon^{\frac{1}{p}}-\varepsilon>0$ for a sufficiently small $0<\varepsilon\ll 1.$

$\bullet$ Secondly, if $x_B<0,$ then $f(M_\varepsilon)-\varepsilon\geq \frac{1}{16}\varepsilon^{\frac{1}{p}}+\frac{1}{2}\varepsilon^{\frac{1}{2p}}x_B-\varepsilon.$ Since $\vert x_B \vert \leq \sqrt{\varepsilon}$ and $\varepsilon^{\frac{1}{2p}}<1,$ we obtain $\vert x_B\vert  \varepsilon^{\frac{1}{2p}}\leq \sqrt{\varepsilon}.$ Thus, $f(M_\varepsilon)-\varepsilon\geq \frac{1}{16}\varepsilon^{\frac{1}{p}}-\frac{1}{2}\sqrt{\varepsilon}-\varepsilon>0$ for a sufficiently small $0<\varepsilon\ll 1$ and for $p \geq 3.$ 

In both cases, we get $f(M_\varepsilon)>\varepsilon,$ hence $M\not \in \mathcal{D}_\varepsilon$ and thus the segment $[BA_\varepsilon]$ is not included in $\mathcal{D}_\varepsilon,$ even though $A_\varepsilon, B \in \mathcal{D}_\varepsilon.$ In other words, $\mathcal{D}_\varepsilon$ is not a star domain with respect to any $B\in \mathcal{D}_\varepsilon \cap \Delta.$ 
\end{proof}

\section{The Poincaré-Reeb tree}\label{sect:ConstructionPR}

\subsection{The Poincaré-Reeb construction}\label{subsec:PReebConstr}
 The main purpose of this section is to introduce a new combinatorial object that measures the non-convexity of a smooth and compact connected component of a real algebraic curve in $\mathbb{R}^2$. We will define the Poincaré-Reeb graph, associated to the given curve and to a direction of projection $x$. It is adapted from the classical construction introduced by H. Poincaré (see \cite[1904, Fifth supplement, page 221]{Po}), which was rediscovered by G. Reeb (in \cite{Re}). Both used it as a tool in Morse theory (see \cite{Ma}). Namely, given a Morse function on a closed manifold, they associated it a graph as a quotient of the manifold by the equivalence 
      relation whose classes are the connected components of the levels of the function. We will perform an analogous 
      construction for a special type of manifold with boundary, namely for a topological disk $\mathcal{D}$ bounded by a smooth and compact connected component of a real algebraic curve. We prove that in our setting the Poincaré-Reeb graph is a plane tree.

$\bullet$ Let us consider a smooth and compact connected component of a real algebraic curve in $\mathbb{R}^2$, denoted by $\mathcal{C}$. Let $\mathcal{D}$ denote the topological disk bounded by $\mathcal{C}$ (see Figure \ref{fig:generalReeb} below). Moreover, let us define the projection $\Pi:\mathbb{R}^2\rightarrow\mathbb{R},$ $\Pi(x,y):=x.$
 
\begin{definition}\label{def:equivRel}
For each $x\in\mathbb{R}$, $\Pi^{-1}(x)\cap\mathcal{D}$ is a finite union of connected vertical segments. Let us define the \defi{equivalence relation $\sim$}\index{equivalence relation $\sim$} as follows: if $(x,y)$ and $(x,y')$ are two points in the real plane $\mathbb{R}^2$ then $(x,y)\sim (x,y')$ if and only if they  belong to the same vertical connected component of the fibre $\Pi^{-1}(x)\cap\mathcal{D}$.
\end{definition}

Given the above equivalence relation, let us consider the canonical quotient map $$q:\mathbb{R}^2\rightarrow\mathbb{R}^2/\sim.$$

\begin{figure}[H]
\centering
\includegraphics[scale=0.2]{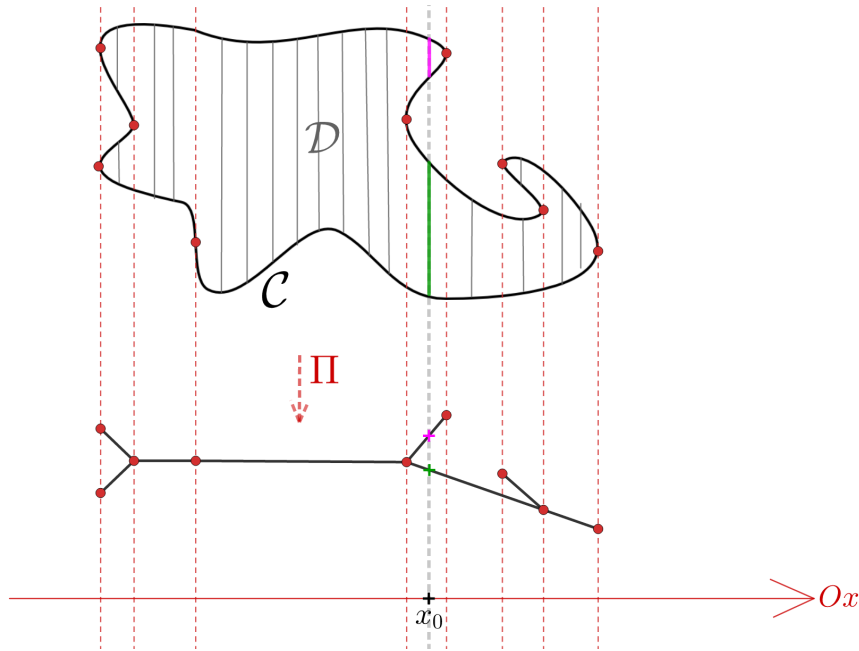} 
\caption{A Poincaré-Reeb graph of a smooth and compact connected component of a real algebraic curve $\mathcal{C}$.\label{fig:generalReeb}}
\end{figure}

Denote by $P:=\mathbb{R}^2$ and by $\tilde{P}:=\mathbb{R}^2/\sim.$ The coordinates in $\tilde{P}$ are $(\tilde{x},\tilde{y}),$ with $\tilde{x}=x.$ 

The function $\Pi$ descends to the quotient in a function $\tilde{\Pi}:\tilde{P}\rightarrow \mathbb{R},$ $\tilde{\Pi}(x,\tilde{y}):=x.$ The function $\tilde{\Pi}$ is continuous, since $\Pi$ is continuous.

We obtain the following commutative diagram (see Figure \ref{fig:commDiagr}):

\begin{figure}[H]
\centering
\includegraphics[scale=0.22,trim=0 350 0 240,clip]{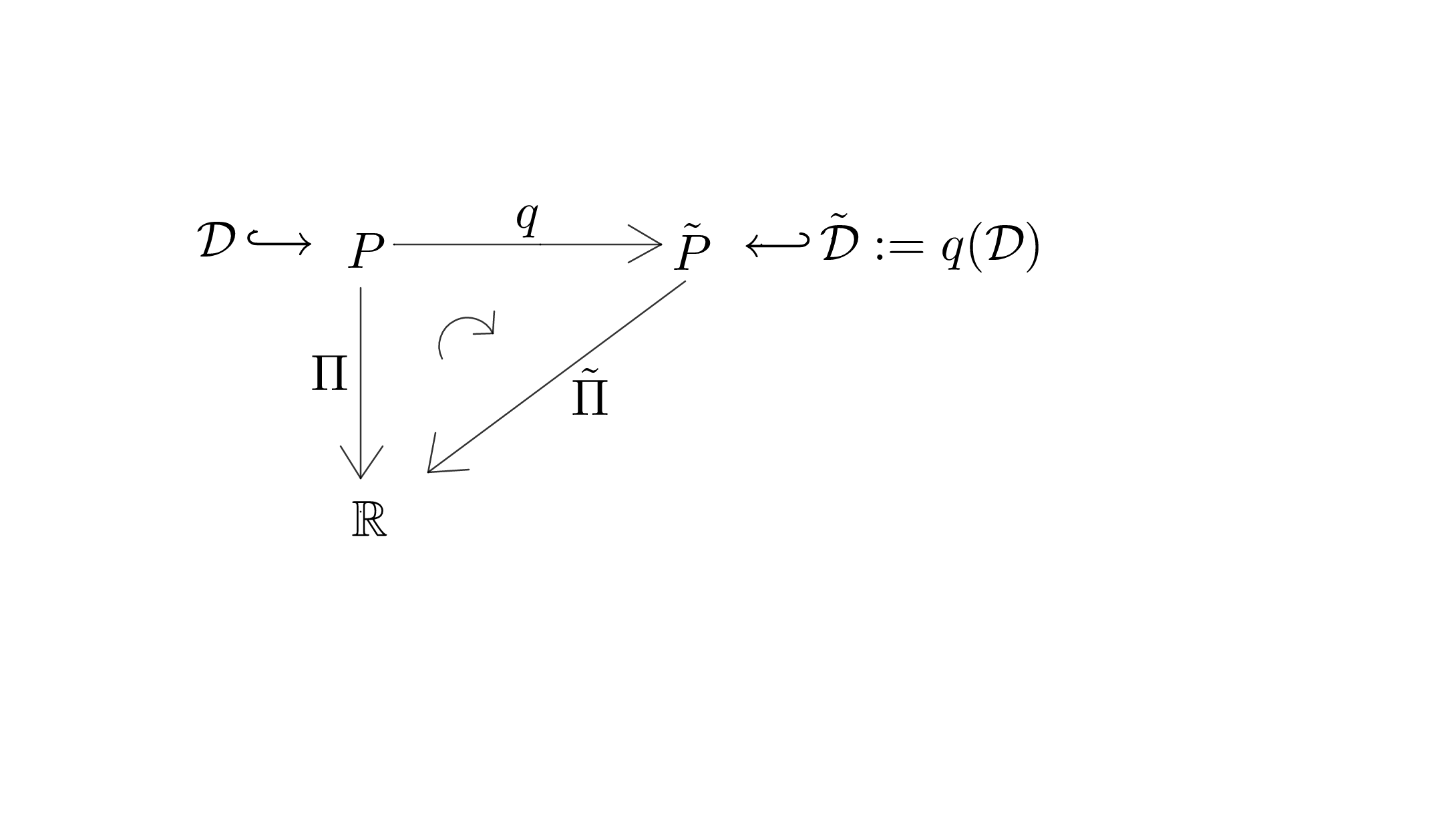} 
\caption{The diagram is commutative: $\Pi=\tilde{\Pi}\circ q$.\label{fig:commDiagr}}
\end{figure}

$\bullet$ Our next aim is to prove that $\tilde{P}$ is homeomorphic to $\mathbb{R}^2$. First we show that $\tilde{P},$ endowed with the function $\tilde{\Pi}$ is a fibre bundle, i.e. that satisfies a local triviality. We will prove that it is locally a cartesian product of two sub-spaces. Furthermore, we shall prove that the local trivialisation of the fibre bundle $\tilde{P}$ implies that $\tilde{P}$ is a trivial fibre bundle, i.e. it is not just locally a product of two spaces, but globally.

\begin{proposition}
The topological space $\tilde{P}$ endowed with the function $\tilde{\Pi}$ is a fibre bundle.
\end{proposition}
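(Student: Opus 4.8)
The plan is to by-pass any case analysis at the points of vertical tangency of $\mathcal{C}$ and instead write down, on the whole plane, an explicit map which collapses exactly the vertical segments of $\mathcal{D}$, is the identity on the first coordinate, and becomes a homeomorphism after passing to the quotient; this exhibits $(\tilde{P},\tilde{\Pi})$ directly as a bundle isomorphic to the product $\mathbb{R}\times\mathbb{R}\to\mathbb{R}$, in particular as a fibre bundle. Two facts are the only place where the algebraicity of $\mathcal{C}$ is used: first, for every $x\in\mathbb{R}$ the fibre $\mathcal{D}\cap\Pi^{-1}(x)$ is a finite union of pairwise disjoint (possibly degenerate) compact vertical segments, with endpoints on $\mathcal{C}$ — in particular $\mathcal{C}$ contains no vertical segment and meets each vertical line in a finite set; second, $\mathcal{D}$, being the disk bounded by a compact Jordan curve, is compact, so we may fix $R>0$ with $\mathcal{D}\subset(-R,R)^2$.

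For $(x,y)\in\mathbb{R}^2$ set $\ell(x,y):=\lambda\bigl(\{t\le y\mid (x,t)\in\mathcal{D}\}\bigr)$ and $\psi_x(y):=y-\ell(x,y)$, with $\lambda$ the Lebesgue measure on the line. The key point is that $\ell$ is continuous on $\mathbb{R}^2$: it is monotone and $1$-Lipschitz in $y$, and for fixed $y$ it is continuous in $x$ because, when $x_n\to x$, the exceptional set where $\mathbf{1}_{\mathcal{D}}(x_n,\cdot)$ may fail to converge pointwise to $\mathbf{1}_{\mathcal{D}}(x,\cdot)$ is contained in the finite set $\{t:(x,t)\in\mathcal{C}\}$ (using that $\Int\mathcal{D}$ and $\mathbb{R}^2\setminus\mathcal{D}$ are open), so dominated convergence applies, everything being supported in $(-R,R)$; joint continuity then follows by combining the two. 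Hence $\psi_x\colon\mathbb{R}\to\mathbb{R}$ is a continuous non-decreasing surjection (it equals $y$ for $y\le-R$ and $y-\lambda(\mathcal{D}\cap\Pi^{-1}(x))$ for $y\ge R$), and, since $\{t:(x,t)\notin\mathcal{D}\}$ is open, $\psi_x(y)=\psi_x(y')$ holds precisely when $[y,y']$ lies in one connected component of $\mathcal{D}\cap\Pi^{-1}(x)$, that is, precisely when $(x,y)\sim(x,y')$. Therefore $\Phi(x,y):=(x,\psi_x(y))$ is a continuous surjection $\mathbb{R}^2\to\mathbb{R}^2$ whose fibres are exactly the classes of $\sim$, and by the universal property of the quotient it descends to a continuous bijection $\overline{\Phi}\colon\tilde{P}\to\mathbb{R}^2$ with $\mathrm{pr}_1\circ\overline{\Phi}=\tilde{\Pi}$.

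It remains to upgrade $\overline{\Phi}$ to a homeomorphism. From $0\le\ell\le 2R$ we get $y-2R\le\psi_x(y)\le y$, so $\Phi$ carries the complement of a large ball into the complement of a large ball; hence $\Phi^{-1}(K)$ is bounded and closed, thus compact, for every compact $K$, and $\overline{\Phi}^{-1}(K)=q(\Phi^{-1}(K))$ is compact too. A proper continuous map into the locally compact Hausdorff space $\mathbb{R}^2$ is closed, so $\overline{\Phi}$ is a closed continuous bijection, i.e. a homeomorphism. Transporting the product trivialisation of $(\mathbb{R}^2,\mathrm{pr}_1)$ through $\overline{\Phi}^{-1}$ shows that $(\tilde{P},\tilde{\Pi})$ is a fibre bundle with fibre $\mathbb{R}$ — indeed a trivial one, which also disposes of the global triviality asserted afterwards.

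The only genuinely delicate point is the continuity of $\ell$, i.e. the continuous dependence on $x$ of the union of vertical segments of $\mathcal{D}$ near the finitely many values where their number jumps; that $\mathcal{C}$ is a piece of an algebraic curve — hence meets each vertical line in a finite set and contains no vertical segment — is exactly what forces this with no case distinction on the Morse or non-Morse nature of the tangency. A more pedestrian alternative would be to cover $x_0$ by a small interval $U$, present $\mathcal{C}\cap\Pi^{-1}(U)$ through the analytic implicit function theorem as finitely many graphs $y=\psi_i(x)$ joining along graphs $x=g(y)$ at the tangencies, use real-analyticity of $\mathcal{C}$ to obtain monotone, hence continuous, functions bounding the segments, and build the trivialisation by hand on $U$; the argument above seems cleaner and treats all $x_0$ uniformly.
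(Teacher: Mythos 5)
Your proof is correct, and it takes a genuinely different route from the paper's. The paper argues local triviality directly: over an interval $U$ around each $x_0$ it asserts that $\mathcal{D}\cap\Pi^{-1}(x)$ is a finite union of intervals whose extremities depend continuously on $x$ away from the critical values of $\Pi_{|\mathcal{C}}$, that collapsing finitely many segments of a vertical line yields a space homeomorphic to $\mathbb{R}$, and concludes $\tilde{\Pi}^{-1}(U)\cong U\times\mathbb{R}$; the behaviour at the critical values themselves is left largely implicit. Your global map $\Phi(x,y)=\bigl(x,\,y-\lambda(\{t\le y:(x,t)\in\mathcal{D}\})\bigr)$ replaces this with a single explicit fibre-preserving collapse whose point-fibres are exactly the classes of $\sim$, and the properness argument upgrades the induced continuous bijection $\overline{\Phi}$ to a homeomorphism $\tilde{P}\cong\mathbb{R}^2$. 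What your approach buys: it is uniform in $x_0$ (no case analysis at the vertical tangencies, which is precisely where the paper's continuity claim is most delicate), the continuity verification reduces to dominated convergence resting only on the finiteness of $\mathcal{C}\cap\Pi^{-1}(x)$, and it yields global triviality in one stroke, so it also subsumes the paper's subsequent proposition (which the paper derives separately from contractibility of the base). What the paper's approach buys is that it is the standard local Reeb-type argument, independent of any measure-theoretic section. Two small points are worth making explicit if you write yours up: the finiteness of $\mathcal{C}\cap\Pi^{-1}(x)$ holds because a polynomial vanishing on a vertical segment vanishes on the whole vertical line, contradicting compactness of the connected component $\mathcal{C}$; and the passage from \emph{$[y,y']$ lies in $\mathcal{D}\cap\Pi^{-1}(x)$ up to measure zero} to genuine containment uses, as you note, that the complement of the fibre is open.
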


\begin{proof}
By the definition of the fibre bundle (see for instance \cite[page 51]{Fr}), what we need to prove is that there exists a local trivialisation of $\tilde{P}$. Let us consider a point $x_0\in\mathbb{R}$ in the base space, that is the $Ox$-axis. Take an open neighbourhood of $x_0,$ called $U\subset\mathbb{R}.$ We want to show that the set $\tilde{\Pi}^{-1}(U)$ is homeomorphic to $U\times\textbf{R},$ here by $\textbf{R}$ we mean a space that is homeomorphic to the real line $\mathbb{R}.$ To this end, we have the following two arguments:

1. If on a vertical line we contract a finite number of segments into points (by the quotient map $q$ described above), we obtain a topological space that is homeomorphic to $\mathbb{R}.$ Denote this space by $\textbf{R}.$

2. The set $\mathcal{D}\cap\Pi^{-1}(x_0)$ is a finite union of intervals (some of them empty intervals), whose extremities depend continuously on $x$ (see Figure \ref{fig:extremitiesContinX}), except when $x_0$ is a critical point.

Hence we have the local triviality: $\tilde{\Pi}^{-1}(U)$ is homeomorphic to $U\times \textbf{R}.$ Thus the image $\tilde{P}$ of the plane $P$, by the quotient map $q$, is a fibre bundle.

\begin{figure}[H]
\centering
\includegraphics[scale=0.2]{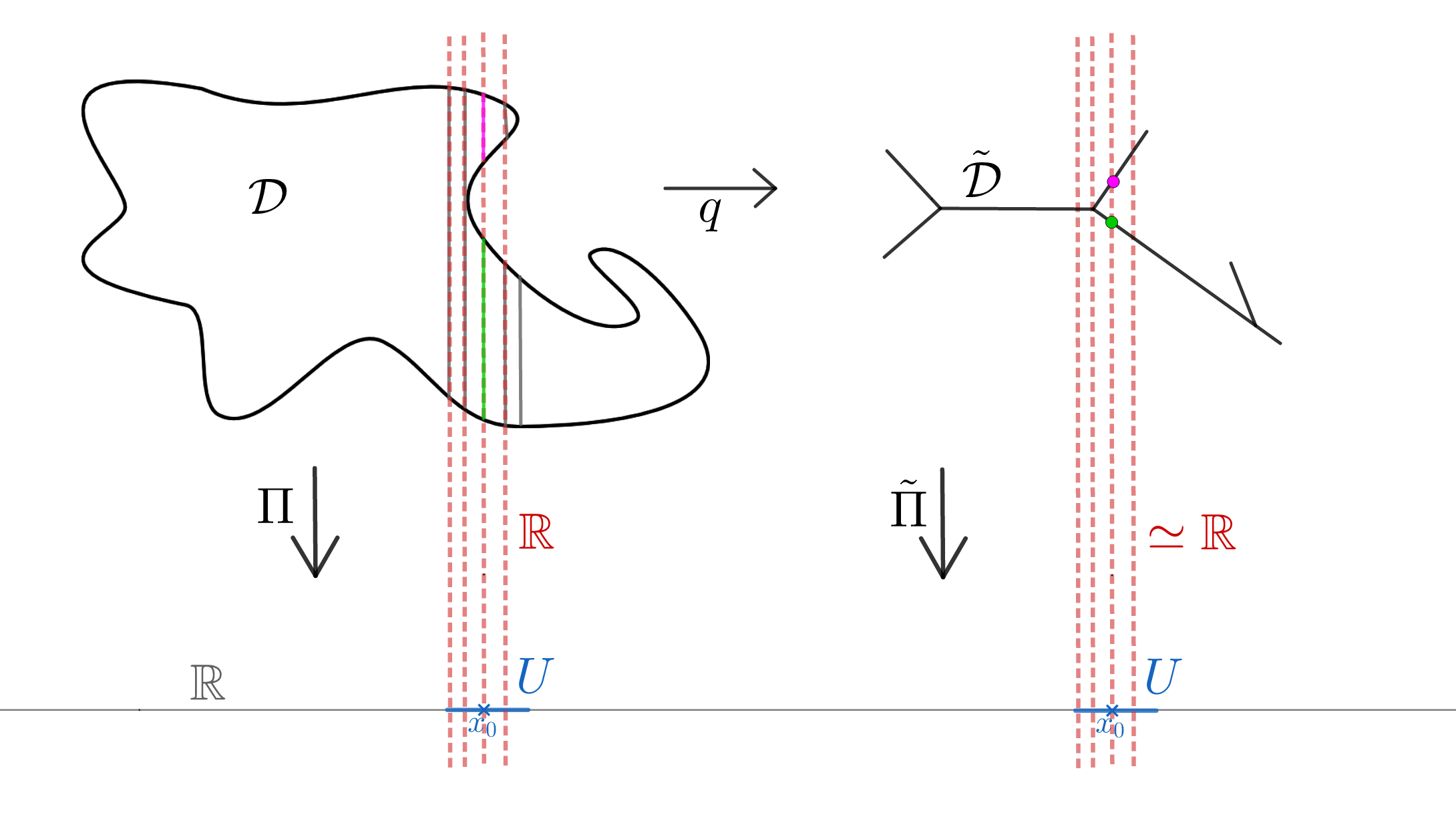} 
\caption{The extremities of the intervals in $\mathcal{D}\cap\Pi^{-1}(x_0)$ depend continously on $x$. After the quotient, we obtain vertical lines homeomorphic to $\mathbb{R}.$ \label{fig:extremitiesContinX}}
\end{figure}

\end{proof}

\begin{proposition}
The topological space $\tilde{P}$ endowed with the function $\tilde{\Pi}$ is a \textbf{trivial} fibre bundle.
\end{proposition}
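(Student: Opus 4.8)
The plan is to upgrade the local triviality established in the previous proposition to a global triviality. Since the base $\mathbb{R}$ (the $Ox$-axis) is contractible, every fibre bundle over it is trivial; however, rather than invoke this as a black box I would give a hands-on argument adapted to our very concrete situation. First I would note that the fibre over each $x_0\in\mathbb{R}$ is either empty or a space homeomorphic to $\mathbb{R}$, obtained from the vertical line $\Pi^{-1}(x_0)$ by collapsing the finitely many segments $\mathcal{D}\cap\Pi^{-1}(x_0)$; call the set of $x$ for which the fibre is nonempty $I=\Pi(\mathcal{D})$, which is a compact interval $[\alpha,\beta]$ because $\mathcal{D}$ is compact and connected and $\Pi$ is continuous.

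Next I would construct an explicit trivialising homeomorphism $\tilde{\Pi}^{-1}(\mathbb{R})\to\mathbb{R}\times\mathbf{R}$. The idea is to choose, continuously in $x$, a monotone reparametrisation of each fibre that normalises it. Concretely, for $x$ outside $I$ the fibre is already a copy of $\mathbb{R}$ and nothing needs to be done; for $x$ in the interior of $I$, write $\mathcal{D}\cap\Pi^{-1}(x)$ as a disjoint union of closed intervals whose endpoints are continuous (indeed piecewise-algebraic) functions of $x$ away from the finitely many critical values $x_1<\cdots<x_k$ of $\Pi|_{\mathcal{C}}$, and on each maximal open slab $(x_i,x_{i+1})$ define the homeomorphism from the quotiented fibre to $\mathbf{R}$ by an affine-on-each-gap rescaling that sends the collapsed points to prescribed integers and stretches the complementary open gaps linearly onto the complementary intervals of $\mathbb{R}\setminus\mathbb{Z}$. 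These maps glue continuously across the critical values $x_i$ because as $x\to x_i$ either two consecutive intervals of the fibre merge (two marked points coalesce) or an interval shrinks to a point or appears — in each case the limiting reparametrisation is again a homeomorphism of the quotient fibre onto $\mathbf{R}$, and one checks the dependence on $x$ is continuous by the continuity of the endpoint functions. This produces a global trivialisation $h:\tilde{\Pi}^{-1}(\mathbb{R})\xrightarrow{\ \sim\ }\mathbb{R}\times\mathbf{R}$ with $\mathrm{pr}_1\circ h=\tilde{\Pi}$, which is exactly the assertion that the bundle is trivial.

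Alternatively, and more cheaply, I would simply cover $\mathbb{R}$ by the finitely many open slabs $U_0=(-\infty,x_1)$, $U_i=(x_{i-1},x_{i+1})$, $U_{k}=(x_{k-1},\infty)$ around the critical values together with small overlaps, note the bundle is trivial over each $U_j$ by the previous proposition, and then patch the trivialisations inductively from left to right: having trivialised over $U_0\cup\cdots\cup U_{j-1}$, the transition homeomorphism over the connected overlap with $U_j$ is a fibrewise self-homeomorphism of $U\times\mathbf{R}$, which can be absorbed by modifying the trivialisation over $U_j$ since the overlap is connected and $\mathbf{R}$ is a line. After finitely many steps one obtains a trivialisation over all of $\mathbb{R}$.

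The main obstacle is the behaviour at the critical values $x_i$: this is where the combinatorial type of the fibre changes (segments merge, appear, or disappear), so one must check that the normalising reparametrisations chosen on adjacent slabs can be made to agree in the limit and that the resulting global map is a genuine homeomorphism, not merely a continuous bijection. Making this precise uses the compactness of $\mathcal{D}$, the finiteness of the set of vertical tangencies of the algebraic curve $\mathcal{C}$, and the continuous (semialgebraic) dependence of the interval endpoints on $x$; once these are in hand the patching is routine. The payoff, to be recorded immediately after, is that $\tilde{P}\cong\mathbb{R}^2$ and hence the image $q(\mathcal{D})$ is a compact connected one-dimensional subspace of the plane, i.e. a finite plane graph, setting up the subsequent proof that it is in fact a tree.
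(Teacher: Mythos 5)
Your proposal is correct, and its opening sentence is in fact the entirety of the paper's proof: the paper simply observes that the base $Ox$-axis is $\mathbb{R}$, which is contractible, cites the standard fact that a fibre bundle over a contractible base is trivial, and concludes $\tilde{P}\cong\mathbb{R}^2$. Everything you add after that is an elaboration the paper does not carry out. Your two hands-on alternatives are both viable and arguably more honest about where the work lies, since the delicate point (the change of combinatorial type of the fibre at the critical values $x_i$ of $\Pi|_{\mathcal{C}}$) is exactly the point that the preceding local-triviality proposition itself treats lightly. Two small cautions if you were to write this out in full. First, in the patching argument the phrase ``absorbed by modifying the trivialisation over $U_j$'' hides a step: a fibrewise self-homeomorphism defined only over the overlap cannot in general be extended over all of $U_j$; the standard fix is to shrink to a cover by sets meeting in closed intervals (or single points), normalise the two trivialisations to agree at one point of the overlap by composing with a single constant homeomorphism of $\mathbf{R}$, and then glue — connectedness of the overlap is what makes the constant correction suffice. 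Second, your explicit fibre-normalisation map must be checked to be an open map (not just a continuous bijection) across the critical slices where intervals merge or vanish; compactness of $\mathcal{D}$ and the semialgebraic continuity of the endpoint functions give this, as you indicate. In short: same key lemma as the paper, but you buy an explicit trivialisation and a self-contained argument at the cost of the bookkeeping at the critical values, whereas the paper buys brevity at the cost of outsourcing that bookkeeping to the cited reference.
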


\begin{proof}
The base space of the fibre bundle $\tilde{P}$ is the $Ox$-axis, namely $\mathbb{R}$, and $\mathbb{R}$ is contractible to a point. Thus we conclude that $\tilde{P}$ is a trivial fibre bundle, since by \cite[Proposition 3.5, page 75]{Os} a fibre bundle over a base space that is contractible to a point is trivial. To be more precise, we have $\tilde{P}$ homeomorphic to $\mathbb{R}^2.$
\end{proof}

\begin{corollary}
The topological subspace $\tilde{\mathcal{D}}:=q(\mathcal{D})$ is embedded in a real plane.
\end{corollary}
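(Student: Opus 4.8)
The plan is to deduce the statement directly from the preceding Proposition, which asserts that the total space $\tilde{P}$ of the trivial fibre bundle $\tilde{\Pi}\colon\tilde{P}\rightarrow\mathbb{R}$ is homeomorphic to $\mathbb{R}^2$. So I would fix once and for all a homeomorphism $\Phi\colon\tilde{P}\rightarrow\mathbb{R}^2$, and the whole task reduces to checking that $\tilde{\mathcal{D}}=q(\mathcal{D})$ sits inside $\tilde{P}$ as a topological subspace.

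First I would observe that $\tilde{\mathcal{D}}=q(\mathcal{D})$ is, by construction, a subset of $\tilde{P}$, which we equip with the subspace topology inherited from $\tilde{P}$. The inclusion map $\iota\colon\tilde{\mathcal{D}}\hookrightarrow\tilde{P}$ is then automatically a topological embedding, since the subspace topology is by definition the coarsest one making $\iota$ continuous, so $\iota$ is a homeomorphism onto its image. Composing, $\Phi\circ\iota\colon\tilde{\mathcal{D}}\rightarrow\mathbb{R}^2$ is an embedding of $\tilde{\mathcal{D}}$ into a real plane, which is exactly the assertion. I would then add a short remark reconciling this \enquote{external} description with the \enquote{internal} quotient one: the restriction $q|_{\mathcal{D}}\colon\mathcal{D}\rightarrow\tilde{\mathcal{D}}$ is a continuous surjection from a compact space onto a Hausdorff space (Hausdorff because $\tilde{\mathcal{D}}\subset\tilde{P}$ and $\tilde{P}$ is homeomorphic to $\mathbb{R}^2$), hence a closed map, hence a quotient map; therefore $\tilde{\mathcal{D}}$ with its subspace topology coincides with $\mathcal{D}/{\sim}$ endowed with its quotient topology. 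This makes precise that the object obtained by contracting the vertical segments of $\mathcal{D}$ genuinely lives inside a plane.

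The main (and essentially only) obstacle is already behind us: it was the two preceding propositions establishing the local triviality and then the global triviality of $\tilde{P}$, from which $\tilde{P}\cong\mathbb{R}^2$. Given those, the corollary is a formal consequence of the definition of a topological embedding; the only care needed is to be explicit about which topology $\tilde{\mathcal{D}}$ carries and to record the compactness argument that makes the subspace and quotient viewpoints agree.
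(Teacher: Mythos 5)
Your argument is correct and matches the paper's (implicit) reasoning: the corollary is stated without proof as an immediate consequence of the preceding proposition that $\tilde{P}$ is a trivial fibre bundle homeomorphic to $\mathbb{R}^2$, so that $\tilde{\mathcal{D}}=q(\mathcal{D})\subset\tilde{P}$ is embedded in a plane. Your additional remark reconciling the subspace and quotient topologies via compactness and Hausdorffness is a worthwhile precision, but not a departure from the paper's route.
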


\begin{theorem}
The topological subspace $\tilde{\mathcal{D}}$ is connected.
\end{theorem}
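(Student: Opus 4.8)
The plan is to deduce connectedness of $\tilde{\mathcal{D}}$ directly from connectedness of $\mathcal{D}$ together with the continuity of the quotient map $q$. First I would recall that $\mathcal{D}$, being the closed topological disk bounded by the smooth Jordan curve $\mathcal{C}$, is a connected (indeed path-connected) subset of the plane $P=\mathbb{R}^2$. The map $q:\mathbb{R}^2\to\tilde{P}$ is continuous by construction, and $\tilde{\mathcal{D}}=q(\mathcal{D})$ is precisely the image of $\mathcal{D}$ under $q$. Since the continuous image of a connected space is connected, $\tilde{\mathcal{D}}$ is connected. This is the conceptual heart of the argument; everything else is bookkeeping to make sure the hypotheses are in place.

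The one point that deserves care — and which I expect to be the only real obstacle — is verifying that $\mathcal{D}$ is genuinely connected in the setting at hand, i.e. that $\mathcal{D}$ is the closed disk bounded by $\mathcal{C}$ rather than something more pathological. Here I would invoke the Jordan–Schoenflies theorem: since $\mathcal{C}$ is a smooth compact connected component of a real algebraic plane curve, it is a smooth Jordan curve, hence bounds a region homeomorphic to the closed unit disk $\overline{\mathbb{D}}\subset\mathbb{R}^2$, which is connected and path-connected. Therefore $\mathcal{D}$ is connected. Alternatively, one may simply observe that $\mathcal{D}$ was defined to be this topological disk, so connectedness is built into the definition.

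Finally I would assemble the steps: $\mathcal{D}$ connected $\Rightarrow$ $q(\mathcal{D})$ connected (continuity of $q$) $\Rightarrow$ $\tilde{\mathcal{D}}$ connected. If one wants to be slightly more self-contained, one can argue with path-connectedness instead: any two points of $\tilde{\mathcal{D}}$ are images $q(a)$, $q(b)$ of points $a,b\in\mathcal{D}$; choosing a path $\gamma:[0,1]\to\mathcal{D}$ from $a$ to $b$, the composite $q\circ\gamma:[0,1]\to\tilde{\mathcal{D}}$ is a path from $q(a)$ to $q(b)$, so $\tilde{\mathcal{D}}$ is path-connected and in particular connected. Either version is short; the proof is essentially immediate once the preceding structural results ($\tilde{P}\cong\mathbb{R}^2$, $\tilde{\mathcal{D}}$ embedded in it) are granted, and no genuine difficulty arises beyond correctly citing that $\mathcal{D}$ is a disk.
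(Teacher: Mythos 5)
Your proof is correct and is essentially identical to the paper's: the image of the connected disk $\mathcal{D}$ under the continuous quotient map $q$ is connected. The extra remarks on Jordan--Schoenflies and path-connectedness are fine but not needed beyond what the paper already records.
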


\begin{proof}
The image of a connected space by a continuous function is connected. Since $\tilde{\mathcal{D}}:=q(\mathcal{D})$, the topological disk $\mathcal{D}$ is connected and the quotient map $q$ is continuous, the proof is complete.
\end{proof}

\begin{theorem}\label{th:transverse}
The topological subspace $\tilde{\mathcal{D}}$ is a one-dimensional subspace of $\tilde{P}$. In each of the regular points of $\tilde{\mathcal{D}}$, we have $\tilde{\mathcal{D}}$ transverse to the foliation of the plane $\tilde{P}$ by vertical lines, induced by $\tilde{\Pi}$.
\end{theorem}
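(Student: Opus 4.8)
The plan is to unwind the definition of $\tilde{\mathcal{D}}=q(\mathcal{D})$ fibrewise. For each $x_0 \in \mathbb{R}$, the fibre $\Pi^{-1}(x_0)\cap \mathcal{D}$ is a finite (possibly empty) union of disjoint closed vertical segments (some degenerate), and by Definition~\ref{def:equivRel} each such segment is collapsed by $q$ to a single point. Hence $\tilde{\Pi}^{-1}(x_0)\cap\tilde{\mathcal{D}}$ is a finite set of points, so the fibres of $\tilde{\Pi}|_{\tilde{\mathcal{D}}}$ are $0$-dimensional. Combined with the fact that $\tilde{\mathcal{D}}$ is a connected subset of the $2$-dimensional space $\tilde{P}\cong\mathbb{R}^2$ that meets each vertical line in finitely many points and is not a single point (since $\mathcal{C}$ has more than one vertical tangency as it bounds a disk), this forces $\dim\tilde{\mathcal{D}}=1$. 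I would make this precise either by a small covering-dimension argument or, more concretely, by exhibiting local homeomorphisms to intervals away from the critical values, as set up in the fibre-bundle propositions above.

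Next I would address transversality at regular points. First I would say what a \emph{regular point} of $\tilde{\mathcal{D}}$ is: a point $\tilde{p}=q(p)$ such that $p$ lies on the arc of $\mathcal{C}$ above a regular value $x_0$ of $\Pi|_{\mathcal{C}}$, i.e. $x_0$ is not the image of a point of vertical tangency. Over a small neighbourhood $U\ni x_0$ of such a value, $\mathcal{C}\cap\Pi^{-1}(U)$ is a finite disjoint union of smooth arcs, each a graph $y=\phi_j(x)$ over $U$ with $\phi_j$ continuous (indeed smooth), and $\mathcal{D}\cap\Pi^{-1}(U)$ is the finite union of the bands between consecutive such graphs. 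Under $q$, each band $\{\phi_j(x)\le y\le\phi_{j+1}(x)\}$ is collapsed to the graph of a single continuous section $x\mapsto(x,\overline{y}_j(x))$ of $\tilde{\Pi}$ over $U$; these sections are disjoint and exhaust $\tilde{\mathcal{D}}\cap\tilde{\Pi}^{-1}(U)$. Each such section is locally the graph of a function of $\tilde{x}=x$, hence is transverse to the vertical foliation $\{\tilde{x}=\text{const}\}$: the section and a vertical line meet in exactly one point, and locally $\tilde{\mathcal{D}}$ is a topological $1$-manifold meeting each leaf in a single point. This is the precise content of the transversality claim in the topological (non-smooth) category used here.

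The main obstacle, and the step I would spend the most care on, is that $q$ is not a homeomorphism onto $\tilde{P}$ near the degenerate bands — in particular at values $x_0$ where some $\phi_j(x_0)=\phi_{j+1}(x_0)$, i.e. where two boundary arcs come together. These are exactly the critical values, which are excluded when we speak of regular points of $\tilde{\mathcal{D}}$; but I still need the images of the \emph{non}-degenerate bands over a neighbourhood $U$ of a regular value to remain disjoint and to glue correctly. Here I would use that over a regular value the arcs $\phi_j$ are not merely continuous but that consecutive ones do not cross on $U$ (shrinking $U$ if necessary), so the collapsed sections $\overline{y}_j$ stay strictly ordered and disjoint. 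I would also invoke the earlier results that $\tilde{P}\cong\mathbb{R}^2$ and that $q$ restricted to $\tilde{\Pi}^{-1}(U)$ realises the fibrewise collapse as a genuine quotient map, so that the local picture in $\tilde{P}$ is honestly $U\times\mathbf{R}$ with $\tilde{\mathcal{D}}$ appearing as finitely many horizontal-type graphs.

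Finally I would assemble: local triviality away from critical values plus the graph description gives that $\tilde{\mathcal{D}}$ is locally at each regular point an embedded arc transverse to the vertical leaves; connectedness (already proved) together with the $0$-dimensionality of the fibres gives the global dimension statement; and the finitely many non-regular points (images of vertical tangencies) are isolated in $\tilde{\mathcal{D}}$, so they do not affect the dimension count. I would remark that the structure of $\tilde{\mathcal{D}}$ near those exceptional points — where it becomes a graph with vertices — is precisely what is analysed in the subsequent results leading to Corollary~\ref{cor:PReebInGeneral}, so it is legitimate to defer it here. This completes the proof of Theorem~\ref{th:transverse}.
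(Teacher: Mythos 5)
Your proof is correct and follows essentially the same route as the paper: finitely many vertical tangencies of $\mathcal{C}$ give finitely many critical values of $\Pi|_{\mathcal{C}}$, and between consecutive critical values the disk decomposes into bands bounded by graphs $y=\phi_j(x)$, each of which collapses under $q$ to an arc that is a section of $\tilde{\Pi}$, hence transverse to the vertical foliation. The paper states this band decomposition more tersely; your additional care about the disjointness and ordering of the collapsed sections and the behaviour of $q$ near degenerate bands only fills in details the paper leaves implicit.
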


\begin{proof}
Since the curve $\mathcal{C}$ is a connected component of a real algebraic curve, that is, it is an analytic curve, it has finitely many points of vertical tangency. In other words, there are finitely many critical levels of the projection $\Pi_{|\mathcal{C}}:\mathcal{C}\rightarrow\mathbb{R}$. Locally near a critical point, one may have a tangency point of even order (see Figure \ref{fig:evenTg}) or of odd order (see Figure \ref{fig:oddTg}):

\vspace{-\baselineskip}
\begin{figure}[H]
\centering
\begin{subfigure}[b]{0.42\textwidth}
\includegraphics[scale=0.13,trim={0 0 0 3cm}, clip]{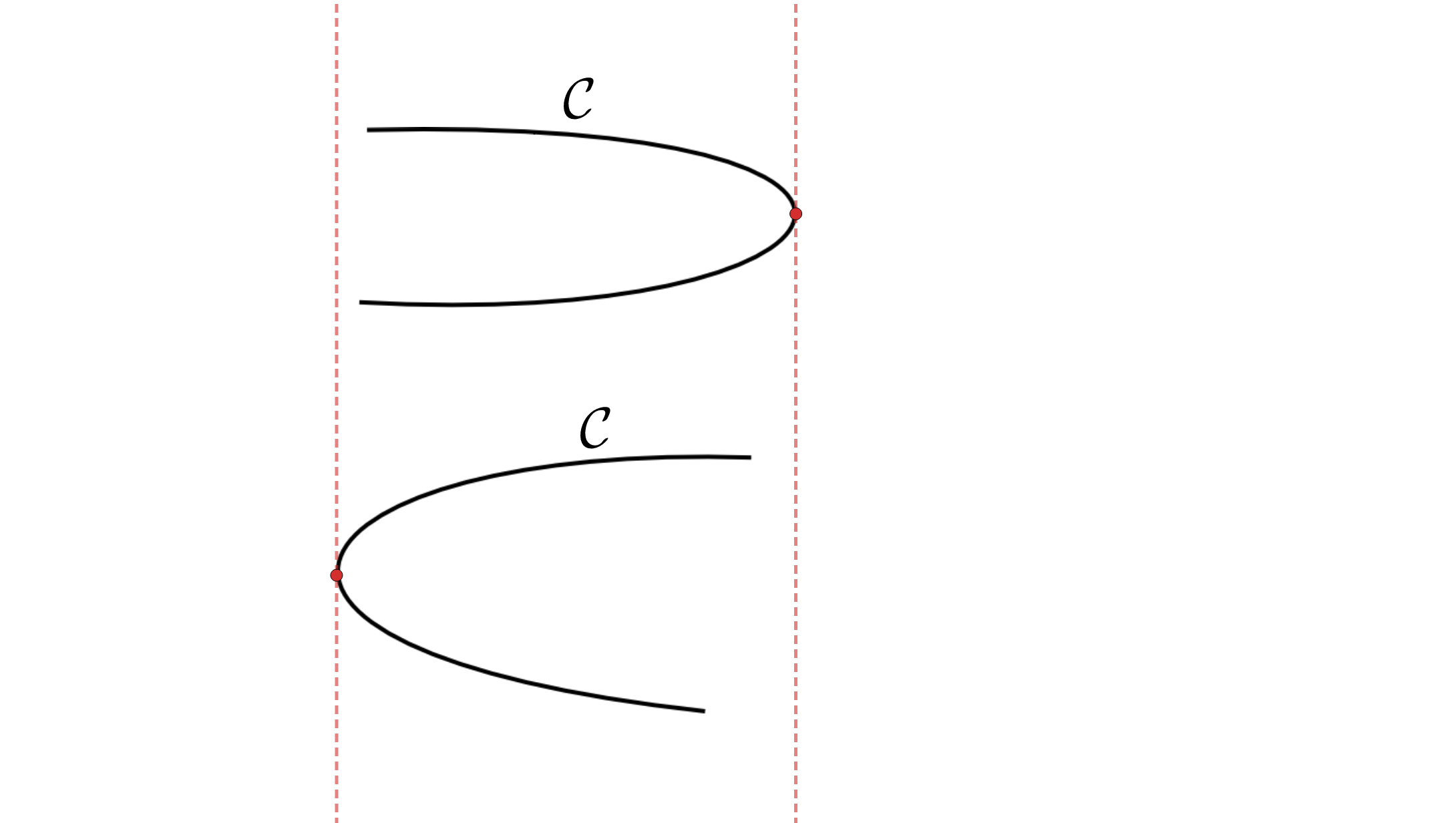} 
\caption{Points on $\mathcal{C}$ of vertical even tangency\label{fig:evenTg} }
\end{subfigure}
\begin{subfigure}[b]{0.42\textwidth}
\includegraphics[scale=0.13,trim={0 0 0 3cm}, clip]{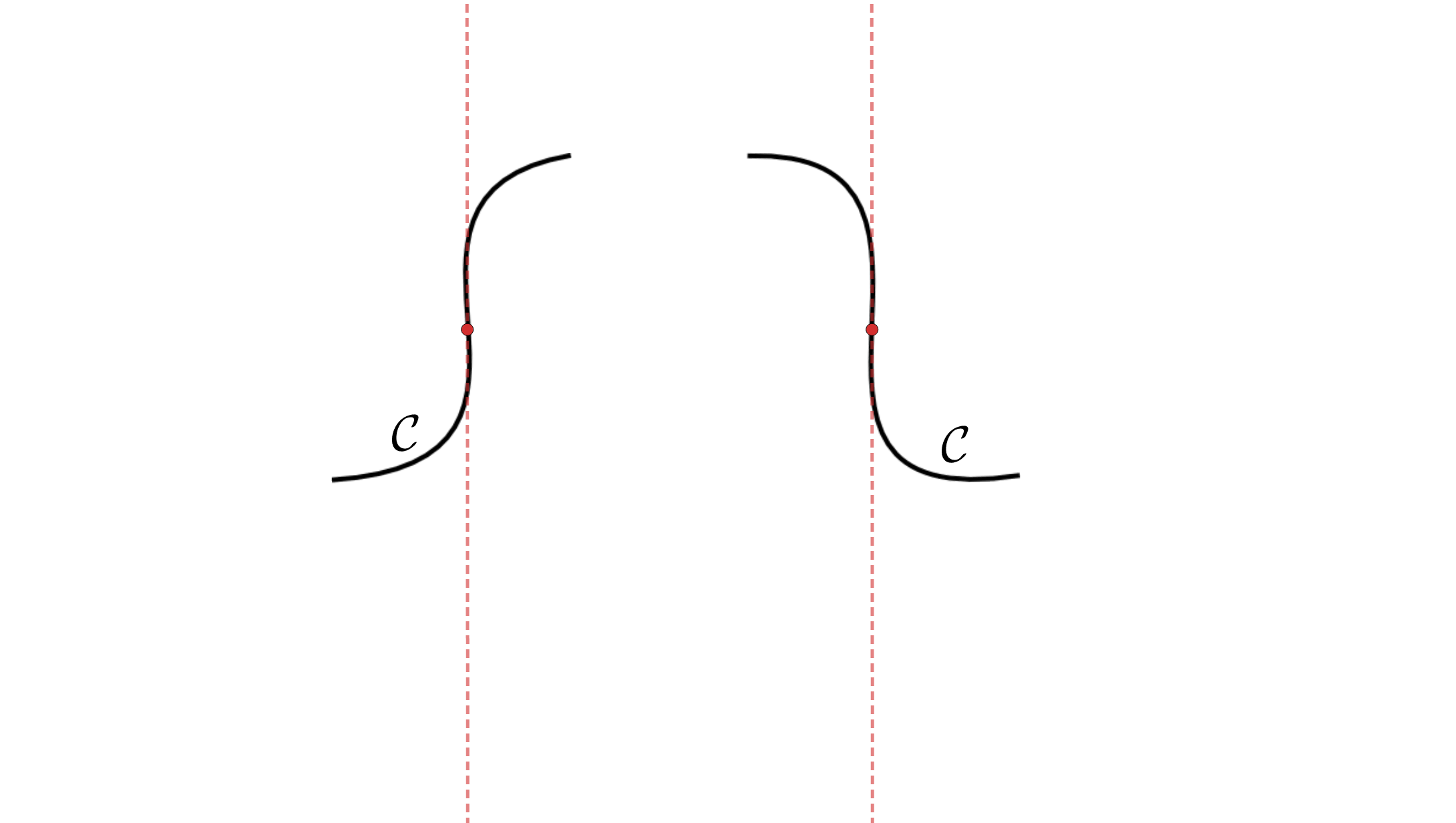} 
\caption{Points on $\mathcal{C}$ of vertical odd tangency\label{fig:oddTg} }
\end{subfigure}
\caption{Vertical tangents.}
\end{figure}

Between any two critical levels, we have only what we call \enquote{bands} of the initial disk $\mathcal{D}$. See, for a better comprehension, Figure \ref{fig:bands} below. After taking the quotient, each band contracts into an arc of dimension one, transverse to the vertical foliation of the real plane $\tilde{P}$. Therefore, the topological space $\tilde{\mathcal{D}}$ is a graph, being a finite union of transverse arcs.

\begin{definition}\cite[pages 13-14]{Vic}
A \defi{preorder}\index{preorder} on a set is a binary relation that is reflexive and transitive.
\end{definition}

\begin{definition}\label{DefReeb}
Given the projection $\Pi:\mathbb{R}^2\rightarrow\mathbb{R},$ let us consider the image $\tilde{\mathcal{D}}:=q(\mathcal{D}),$ of the topological disk $\mathcal{D}$ bounded by the smooth and compact connected component of a real algebraic curve $\mathcal{C}$, where $q$ the quotient map introduced before. The image $\tilde{\mathcal{D}},$ endowed with special vertices, namely the finitely many points corresponding to the critical points of $\Pi$ on $\mathcal{C}$ is called \defi{the Poincaré-Reeb graph}\index{Poincaré-Reeb graph} associated to the curve $\mathcal{C}$ and to $\Pi$. Endow the real plane with the trivial fibre bundle $\tilde{P}$, then endow the vertices of the graph with the induced preorder.
\end{definition}
\begin{figure}[H]
\centering

\includegraphics[scale=0.25,trim={0 5cm 0 4cm}, clip]{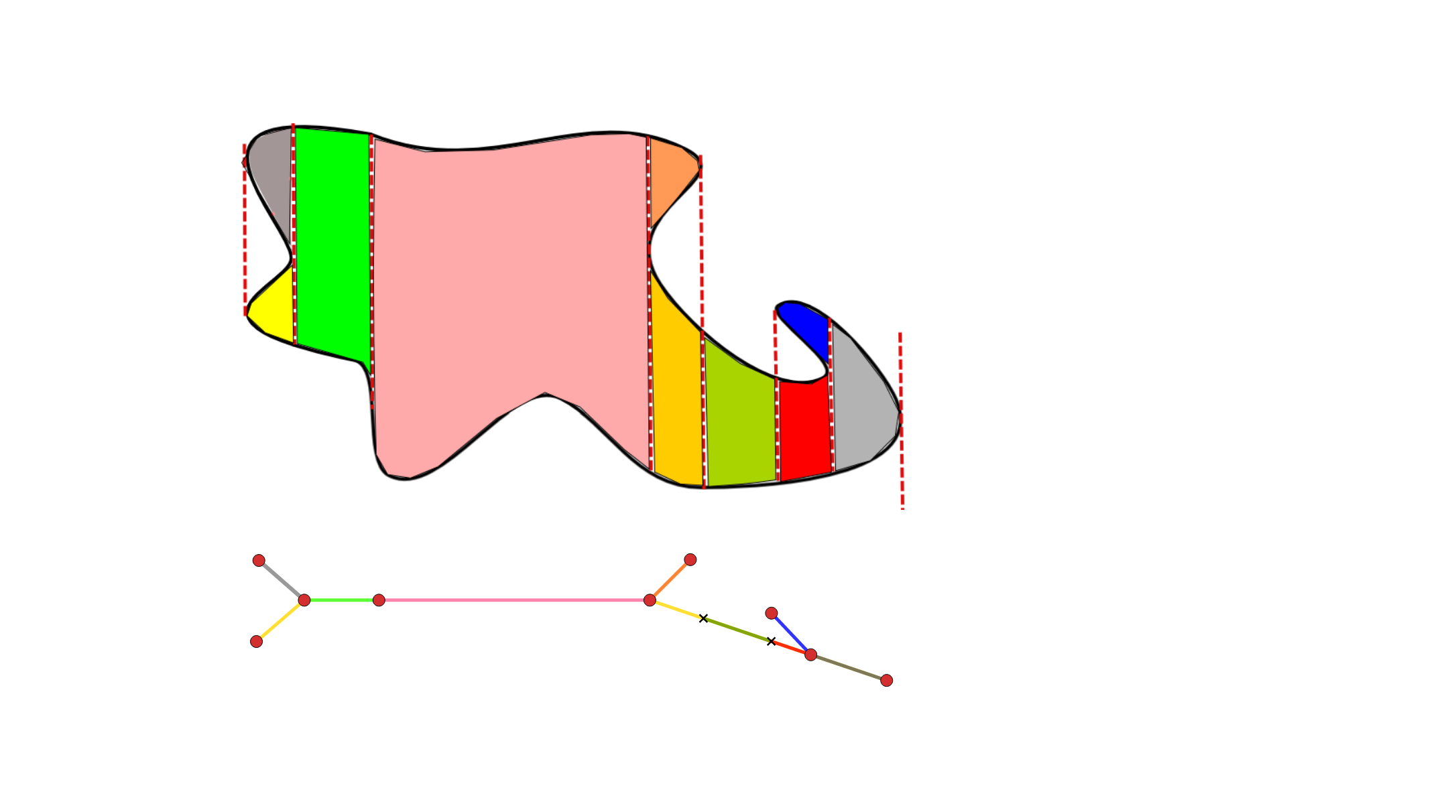} 
\caption{\enquote{Bands} of the disk $\mathcal{D}$, between two consecutive critical levels of the vertical projection $\Pi$. The bands contract into one-dimensional transverse arcs.\label{fig:bands}}
\end{figure}

\end{proof}

In conclusion, the Poincaré-Reeb construction that we introduce in this paper gives us a plane connected graph, embedded in a surface that is homeomorphic to $\mathbb{R}^2$. We will prove in the next section that it is in fact a tree.

\subsection{Constructible functions and a Fubini type theorem}\label{Sect:TreeFubini}

In this section we will recall the integration of constructible functions with respect to the Euler characteristic and a Fubini-type theorem for it. Then we will apply it to the proof of Proposition \ref{Prop:ReebOfDiskIsTree}.

We will first introduce the necessary tools: constructible functions. We shall follow the definitions and results from \cite[Chapter 3, pages 21-22]{Co}. For more details, the reader should refer to \cite[page 162]{Wa}. Constructible functions have applications in the study of the topology of singular real algebraic sets (see for instance \cite{ND}). The main tool is the integration against the Euler characteristic, which was first described in Viro's paper \cite{Vi}.

\begin{definition}\cite[page 25]{Co1}
A \defi{basic semialgebraic set}\index{semialgebraic!basic semialgebraic set} is a subset of $\mathbb{R}^n$ satisfying a finite number of polynomial equations and inequalities with real coefficients.
\end{definition}

\begin{example}
\ \\
$\bullet$ All algebraic sets are basic semialgebraic.
\ \\
$\bullet$ Unions of finitely many points and open intervals in $\mathbb{R}$ are basic semialgebraic sets.
\end{example}

\begin{definition}\cite[page 29]{Co1}
Let us consider two basic semialgebraic sets $M\subset\mathbb{R}^m$ and $N\subset\mathbb{R}^n$. We say that a continuous map $s:M\rightarrow N$ is a \defi{semialgebraic map}\index{semialgebraic!map} if its graph
$\{(x,y)\in M\times N\mid y=s(x)\}$
is a semialgebraic subset of $\mathbb{R}^m\times\mathbb{R}^n$.
\end{definition}

\begin{definition}\cite[Chapter 3, page 21]{Co}
A \defi{constructible function}\index{function!constructible function} on a semialgebraic set $A$ is a function $\varphi:A\rightarrow\mathbb{Z}$ which takes finitely many values and such that, for every $n\in\mathbb{Z}$, $\varphi^{-1}(n)$ is a semialgebraic subset of $A$.
\end{definition}

\begin{remark}\cite[page 162]{Wa}
Each constructible function is an integer linear combination of characteristic functions of constructible sets $X_i$, i.e. of semialgebraic subsets of $A$:
$$\varphi=\sum_{i\in I}m_i\mathbf{1}_{X_i},$$
where $m_i\in\mathbb{Z}$ and $\mathbf{1}_{X_i}$ is the characteristic function of $X_i$.
\end{remark}

\begin{definition}\cite[Chapter 3, page 22]{Co}
Let $\varphi$ be a constructible function on a semialgebraic set $X$. The \defi{integral} of $\varphi$ \textbf{with respect to the Euler characteristic $\chi$}\index{integral with respect to the Euler characteristic $\chi$} is by definition:
$$\int_{X}\varphi\mathrm{d}\chi:=
\sum_{n\in\mathbb{Z}}n\chi(\varphi^{-1}(n)\cap X),$$
where $\chi$ denotes the Euler characteristic.
\end{definition}

\begin{definition}\cite[Chapter 3, page 22]{Co}\label{def:Pushforward}
Let us consider a continuous semialgebraic map $f:A\rightarrow B$ and let $\varphi$ be a constructible function on $A$. The \defi{pushforward}\index{pushforward} of $\varphi$ \textbf{along} $f$ (see Figure \ref{fig:pushfor}) is the function $f_*\varphi:B\rightarrow \mathbb{Z}$, $$f_*\varphi(y):=\int_{f^{-1}(y)}\varphi\mathrm{d}\chi.$$
\end{definition}

\begin{figure}[H]
\begin{center}
\includegraphics[scale=0.12,trim={0 6cm 0 7cm}, clip]{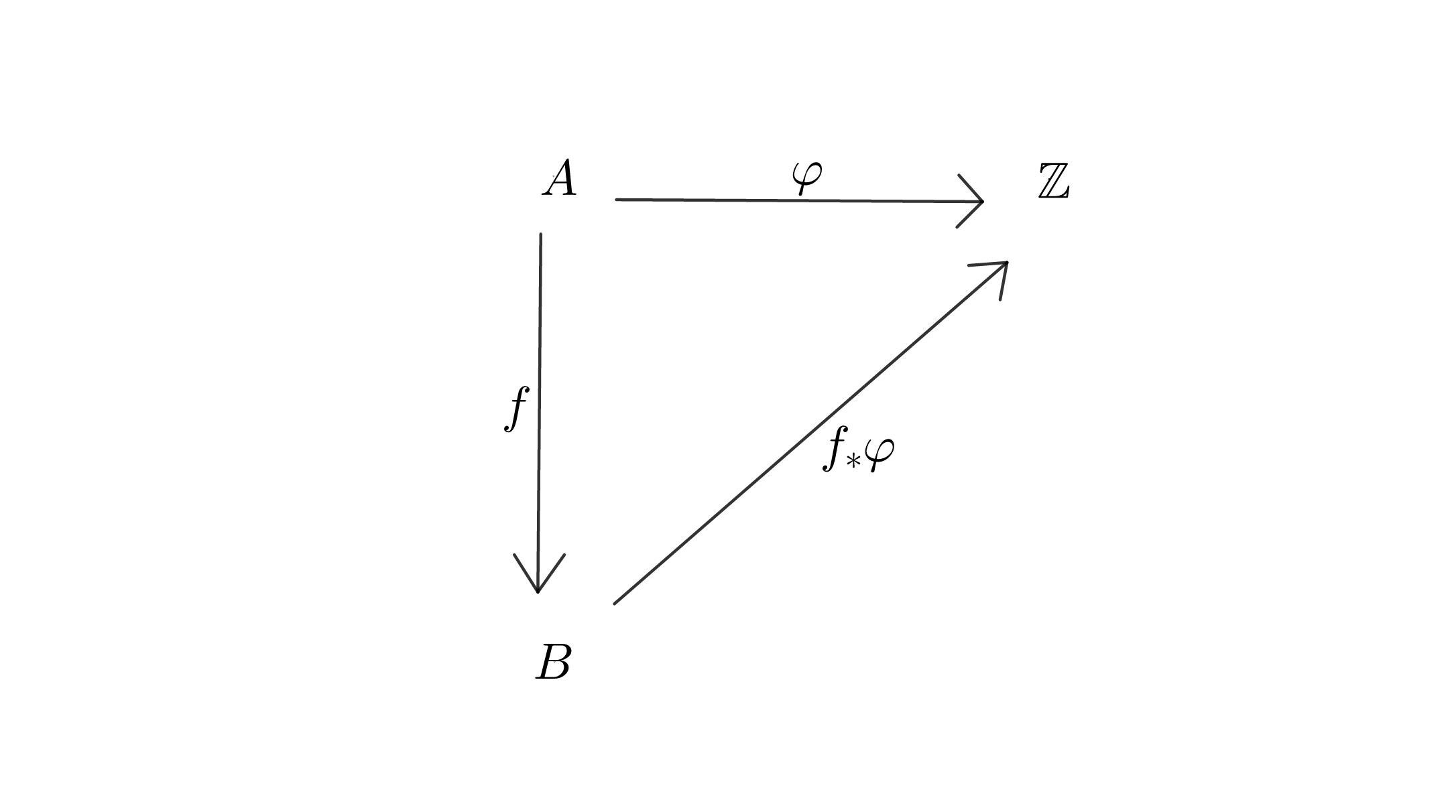} 
\end{center}\caption{The pushforward $f_*\varphi$ of $\varphi$ along $f$.\label{fig:pushfor}}
\end{figure}

\begin{remark}
If we apply the definition of push-forward to the application from a set $A$ towards a point, we obtain the function that associates at this point the integral with respect to the Euler characteristic of the starting function on $A$.
\end{remark}

\begin{remark}
The diagram from Figure \ref{fig:pushfor} is not commutative. If, for example, $A:=\{a,b\}$, $B:=\{c\}$, where $a,b,c$ are points in $\mathbb{R}^2$, $f:A\rightarrow B$ a semialgebraic map such that $f(a)=f(b):=c$ and $\varphi$ a constructible function on $A$ such that $\varphi(a):=-1$, $\varphi(b):=1$, then by definition we compute $f_*\varphi(b)=\int_{f^{-1}(c)}\varphi\mathrm{d}\chi=-1+1=0.$ Thus $f_*\varphi \circ f(a)=0$ while $\varphi(a)=-1$. Therefore $f_*\varphi \circ f\neq\varphi.$
\end{remark}

\begin{remark}
The following proofs can be extended to semianalytic sets, with the more general hypothesis that $f$ is analytic.
\end{remark}

\begin{theorem}\cite[Chapter 3, page 22]{Co}\label{Th:Fubini}
\textbf{Fubini Type Theorem:} Let $f:A\rightarrow B$ be a semialgebraic map and $\varphi$ a constructible function on $A$. Then 
$$\int_{B}f_*\varphi\mathrm{d}\chi=\int_{A}\varphi \mathrm{d}\chi.$$
\end{theorem}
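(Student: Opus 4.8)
The plan is to reduce the identity to three basic properties of the (compactly supported, combinatorial) Euler characteristic in the semialgebraic category — finite additivity over semialgebraic partitions, multiplicativity on products, and invariance under semialgebraic homeomorphisms — combined with Hardt's semialgebraic triviality theorem.

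First I would note that both sides are $\mathbb{Z}$-linear in $\varphi$: the left-hand side because $f_*$ is linear (immediate from the definition of the pushforward and the additivity of $\int\,\mathrm{d}\chi$ in the integrand, applied fibrewise) followed by the linearity of $\int_B\,\mathrm{d}\chi$, and the right-hand side because $\int_A\,\mathrm{d}\chi$ is linear. Since every constructible function on $A$ is a finite integer combination $\sum_i m_i\mathbf{1}_{X_i}$ of characteristic functions of semialgebraic subsets $X_i\subseteq A$, it suffices to treat $\varphi=\mathbf{1}_X$ for a single semialgebraic $X\subseteq A$. Then $\int_A\mathbf{1}_X\,\mathrm{d}\chi=\chi(X)$, while for $y\in B$ one has $f_*\mathbf{1}_X(y)=\chi\bigl(X\cap f^{-1}(y)\bigr)=\chi\bigl((f|_X)^{-1}(y)\bigr)$, so the claim becomes
$$\chi(X)=\int_{B}\chi\bigl((f|_X)^{-1}(y)\bigr)\,\mathrm{d}\chi(y),$$
a statement about the single semialgebraic map $g:=f|_X:X\rightarrow B$ (one also uses here that the pushforward of a constructible function is constructible, so the right-hand integrand makes sense).

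Next I would apply Hardt's semialgebraic triviality theorem to $g$: there is a finite partition $B=\bigsqcup_{j\in J}B_j$ into semialgebraic pieces and, for each $j$, a semialgebraic set $F_j$ and a semialgebraic homeomorphism $g^{-1}(B_j)\cong B_j\times F_j$ over $B_j$. On one hand, $y\mapsto\chi\bigl(g^{-1}(y)\bigr)$ is then constant equal to $\chi(F_j)$ on each $B_j$, so additivity of $\int\,\mathrm{d}\chi$ over the partition gives
$$\int_{B}\chi\bigl(g^{-1}(y)\bigr)\,\mathrm{d}\chi(y)=\sum_{j\in J}\chi(F_j)\,\chi(B_j).$$
On the other hand, additivity of $\chi$ over the partition $X=\bigsqcup_j g^{-1}(B_j)$ together with the product formula $\chi(B_j\times F_j)=\chi(B_j)\chi(F_j)$ and homeomorphism invariance gives $\chi(X)=\sum_{j\in J}\chi(B_j)\chi(F_j)$. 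Comparing the two expressions yields the identity.

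The main obstacle — and the place where one must pin down precisely which Euler characteristic is in play — is establishing the three properties used above inside the semialgebraic world: additivity of $\chi$ and of $\int\,\mathrm{d}\chi$ over semialgebraic partitions, multiplicativity $\chi(M\times N)=\chi(M)\chi(N)$, and invariance under semialgebraic homeomorphisms. These hold for the combinatorial Euler characteristic attached to a cylindrical algebraic decomposition (equivalently, the Euler characteristic with compact supports), and they are exactly the framework set up in \cite[Chapter 3]{Co}; I would either cite that development or reprove additivity and multiplicativity by induction on a cell decomposition adapted to $g$. Alternatively, one can bypass Hardt's theorem entirely and argue cell by cell: choose a single cylindrical algebraic decomposition of $\mathbb{R}^m\times\mathbb{R}^n$ compatible with $X$ and with the graph of $f$, and check the identity directly by summing signed cell counts over the fibres.
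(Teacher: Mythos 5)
Your argument is correct, and it is essentially the standard proof of this result. The paper itself offers no proof --- the theorem is quoted verbatim from \cite[Chapter 3, page 22]{Co} --- and the proof in that reference follows exactly your route: reduce by $\mathbb{Z}$-linearity to $\varphi=\mathbf{1}_X$, apply Hardt's semialgebraic triviality theorem to get a finite semialgebraic partition of $B$ over which $f|_X$ is trivial, and conclude using additivity of the combinatorial (compactly supported) Euler characteristic over semialgebraic partitions, multiplicativity on products, and invariance under semialgebraic homeomorphisms. You correctly flag the one point that genuinely needs care, namely that $\chi$ here must be the combinatorial Euler characteristic attached to a cell decomposition rather than the homotopy-invariant one, since additivity over locally closed pieces fails for the latter; with that convention fixed, nothing is missing.
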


We are ready now to prove the main result of this section, namely Proposition \ref{Prop:ReebOfDiskIsTree}.
\begin{proposition}\label{Prop:ReebOfDiskIsTree}
The Poincaré-Reeb graph of a topological disk $\mathcal{D}$ bounded by a smooth and compact connected component of a real algebraic curve in $\mathbb{R}^2$ is a tree. 
\end{proposition}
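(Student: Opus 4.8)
The strategy is to show that the graph $\tilde{\mathcal{D}}$ has the Euler characteristic of a point (i.e. $\chi(\tilde{\mathcal{D}})=1$): since we already know from the previous subsection that $\tilde{\mathcal{D}}$ is a connected one-dimensional plane graph, being connected with $\chi=1$ forces it to have no cycles, hence to be a tree. The Euler characteristic of $\mathcal{D}$ itself is $1$ because $\mathcal{D}$ is a topological disk, so the whole point is to transport this information through the quotient map $q$ using the Fubini-type theorem (Theorem \ref{Th:Fubini}) applied to $\tilde{\Pi}:\tilde{\mathcal{D}}\to\mathbb{R}$ and $\Pi:\mathcal{D}\to\mathbb{R}$.

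First I would set up the constructible functions. On $\mathcal{D}$ take $\varphi=\mathbf{1}_{\mathcal{D}}$, and on $\tilde{\mathcal{D}}$ take $\psi=\mathbf{1}_{\tilde{\mathcal{D}}}$; both $\mathcal{D}$ and $\tilde{\mathcal{D}}$ are semialgebraic and $\Pi$, $\tilde{\Pi}$ are semialgebraic maps, so all hypotheses of Theorem \ref{Th:Fubini} are met. The key geometric observation is that for every $x\in\mathbb{R}$ the fibre $\Pi^{-1}(x)\cap\mathcal{D}$ is a finite disjoint union of (nonempty) closed vertical segments and points, each of which is contractible, hence each contributes $1$ to the Euler characteristic; by construction of the equivalence relation $\sim$, the quotient map $q$ sends each such connected component to a single point of $\tilde{\mathcal{D}}$, and these points are exactly the fibre $\tilde{\Pi}^{-1}(x)\cap\tilde{\mathcal{D}}$. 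Therefore the pushforwards satisfy
\[
\Pi_*\mathbf{1}_{\mathcal{D}}(x)=\chi\bigl(\Pi^{-1}(x)\cap\mathcal{D}\bigr)=\#\{\text{components of }\Pi^{-1}(x)\cap\mathcal{D}\}=\chi\bigl(\tilde{\Pi}^{-1}(x)\cap\tilde{\mathcal{D}}\bigr)=\tilde{\Pi}_*\mathbf{1}_{\tilde{\mathcal{D}}}(x)
\]
for all $x$, i.e. $\Pi_*\mathbf{1}_{\mathcal{D}}=\tilde{\Pi}_*\mathbf{1}_{\tilde{\mathcal{D}}}$ as constructible functions on $\mathbb{R}$.

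Now I would integrate. Applying Theorem \ref{Th:Fubini} twice, once to $\Pi$ and once to $\tilde{\Pi}$,
\[
\chi(\tilde{\mathcal{D}})=\int_{\tilde{\mathcal{D}}}\mathbf{1}_{\tilde{\mathcal{D}}}\,\mathrm{d}\chi=\int_{\mathbb{R}}\tilde{\Pi}_*\mathbf{1}_{\tilde{\mathcal{D}}}\,\mathrm{d}\chi=\int_{\mathbb{R}}\Pi_*\mathbf{1}_{\mathcal{D}}\,\mathrm{d}\chi=\int_{\mathcal{D}}\mathbf{1}_{\mathcal{D}}\,\mathrm{d}\chi=\chi(\mathcal{D})=1,
\]
where $\chi(\mathcal{D})=1$ since $\mathcal{D}$ is homeomorphic to a closed disk. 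A connected finite graph $G$ satisfies $\chi(G)=1-(\text{first Betti number})$, so $\chi(\tilde{\mathcal{D}})=1$ together with connectedness (proven earlier) yields that $\tilde{\mathcal{D}}$ has no independent cycles, hence is a tree. This completes the argument.

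The main obstacle, and the step deserving the most care, is the fibrewise Euler characteristic computation: one must verify that for \emph{every} $x$ — including the critical values of $\Pi_{|\mathcal{C}}$ — the fibre $\Pi^{-1}(x)\cap\mathcal{D}$ is a finite union of contractible pieces (closed segments, possibly degenerating to points) and that $q$ identifies precisely these pieces with the points of $\tilde{\Pi}^{-1}(x)\cap\tilde{\mathcal{D}}$, with no extra identifications and no loss of points. This relies on $\mathcal{C}$ being an analytic Jordan curve with only finitely many vertical tangencies, so that each vertical slice of the closed region $\mathcal{D}$ is a finite union of intervals; the subtlety is only bookkeeping at the finitely many critical values, where segments can shrink to points or two bands can meet, but in all cases each connected component of the slice remains contractible, so the count of components — equivalently the Euler characteristic — is preserved by $q$. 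Once this is granted, the rest is a formal application of the Fubini-type theorem.
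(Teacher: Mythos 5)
Your proof is correct and follows essentially the same route as the paper: Euler integration and the Fubini-type theorem to show $\chi(\tilde{\mathcal{D}})=\chi(\mathcal{D})=1$, combined with the previously established connectedness to conclude that $b_1=0$ and hence that the graph is a tree. The only cosmetic difference is that the paper applies the Fubini-type theorem once, directly to the contraction $\pi:\mathcal{D}\to G$ (whose fibres are single contractible segments, giving $\pi_*\mathbf{1}_{\mathcal{D}}=\mathbf{1}_G$), whereas you apply it twice, to $\Pi$ and to $\tilde{\Pi}$, and compare the two pushforwards on the base $\mathbb{R}$; the key geometric input --- that each connected component of a vertical slice is contractible and corresponds to exactly one point of the quotient, including at the finitely many critical values --- is identical in both arguments.
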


\begin{proof}
In Definition \ref{def:Pushforward} let us replace $A$ with the disk $\mathcal{D},$ $B$ with the graph $G$ obtained after the Poincaré-Reeb contraction $\pi$, the function $f$ with the Poincaré-Reeb contraction $\pi$ and $\varphi$ with the characteristic function $\mathbf{1}_{\mathcal{D}}$, as in Figure \ref{fig:pushforEx}:

\begin{figure}[H]
\begin{center}

\includegraphics[scale=0.12,trim={0 6cm 0 7cm}, clip]{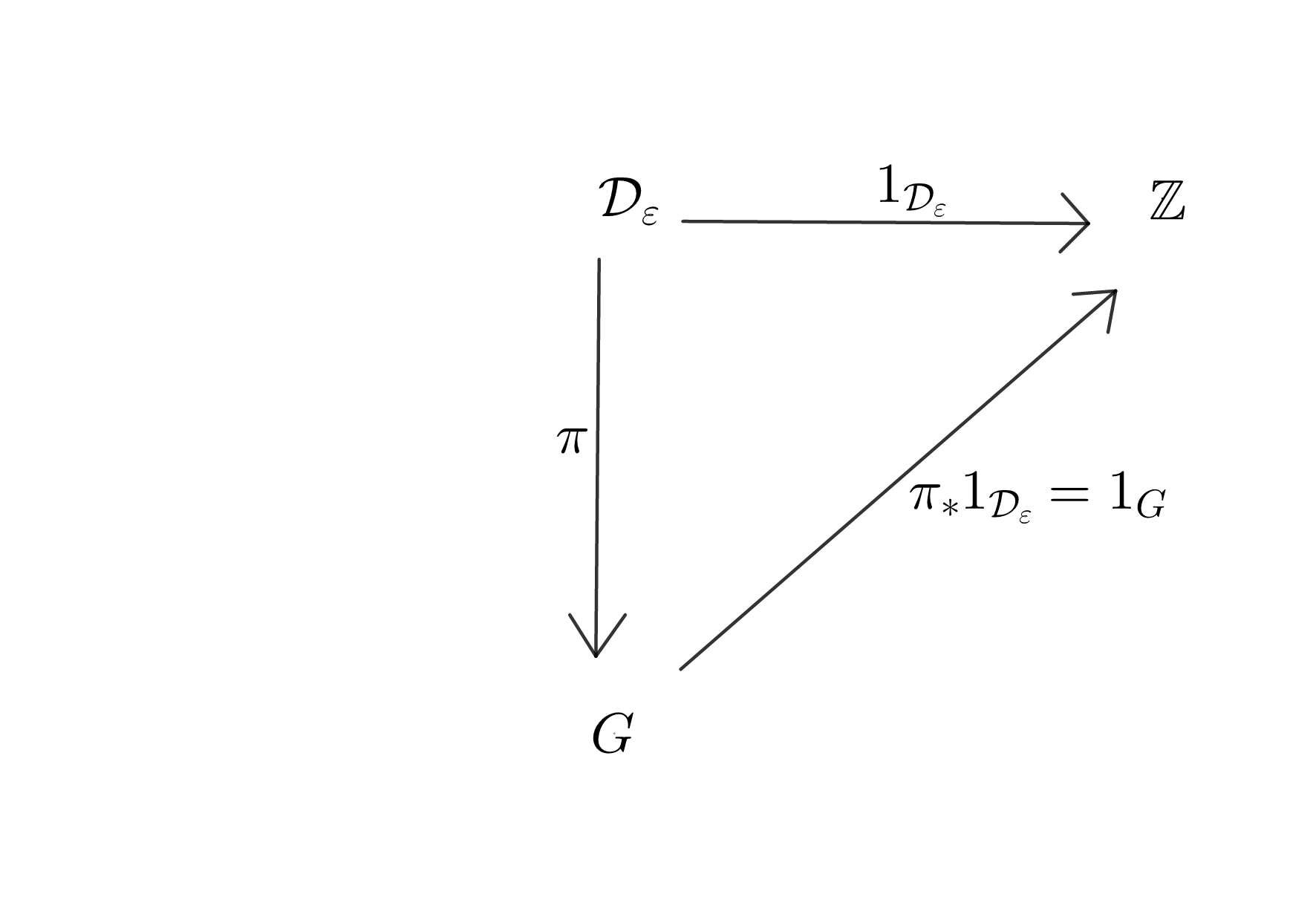} 
\end{center}\caption{The pushforward $\pi_*\mathbf{1}_{\mathcal{D}}$ of $\mathbf{1}_{\mathcal{D}}$ along $\pi$.\label{fig:pushforEx}}
\end{figure}

In other words, we apply the Fubini Type Theorem \ref{Th:Fubini} for the pushforward of the constructible function $\mathbf{1}_{\mathcal{D}}$.

Now for $g\in G$, we obtain: $$\pi_*\mathbf{1}_{\mathcal{D}}(g)=\int_{\pi^{-1}(g)}\mathbf{1}_{\mathcal{D}}\mathrm{d}
\chi=\sum_{n\in\mathbb{Z}}n\chi(\mathbf{1}_{\mathcal{D}}^{-1}(n)\cap \pi^{-1}(g))
=\chi(\pi^{-1}(g))=1,$$ since $\pi^{-1}(g)$ is a connected component of the fibre, i.e. a segment. Thus $$\pi_*\mathbf{1}_{\mathcal{D}}=\mathbf{1}_G.$$

By Theorem \ref{Th:Fubini}, we have now $$\int_{\mathcal{D}}\mathbf{1}_{\mathcal{D}}\mathrm{d}\chi=\int_{G}\pi_*\mathbf{1}_D\mathrm{d}\chi.$$ Since $\pi_*\mathbf{1}_{\mathcal{D}}=\mathbf{1}_G,$ we obtain: $$\int_{\mathcal{D}}\mathbf{1}_{\mathcal{D}}\mathrm{d}\chi=\int_{G}\mathbf{1}_G\mathrm{d}\chi.$$ Therefore, $\chi({\mathcal{D}})=\chi(G).$ Since $\mathcal{D}$ is a disk, we have $\chi({\mathcal{D}})=1.$ In Section \ref{sect:ConstructionPR} we proved that  $G$ is a connected graph, thus the Betti number $b_0=1$. We have $\chi(G)=b_0-b_1.$ Since we proved that its Euler characteristic is $\chi(G)=1,$ we get that the Betti number $b_1=0$. Thus we conclude that $G$ has no cycles, hence $G$ is a tree.

\end{proof}

 \begin{corollary}\label{cor:PReebInGeneral}
 The Poincaré-Reeb graph associated to 
a compact and smooth connected component of a real 
       algebraic plane curve and to
a direction of projection $x$ is a plane tree whose open edges are transverse to the foliation induced by the function $x$. Its vertices are endowed with a total preorder relation induced by the function $x$.
 \end{corollary}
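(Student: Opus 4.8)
The plan is to assemble Corollary~\ref{cor:PReebInGeneral} from the three main ingredients already established in this section. First I would invoke Proposition~\ref{Prop:ReebOfDiskIsTree} directly: it tells us that the Poincaré-Reeb graph $G$ of the disk $\mathcal{D}$ bounded by $\mathcal{C}$ has no cycles and is therefore a tree. Second, I would recall from Section~\ref{sect:ConstructionPR} — specifically from Theorem~\ref{th:transverse} and Definition~\ref{DefReeb} — that $G$ is embedded in the surface $\tilde{P}$, which the earlier propositions show is homeomorphic to $\mathbb{R}^2$; hence $G$ is a \emph{plane} tree. The transversality of the open edges to the vertical foliation induced by $\tilde{\Pi}$ is exactly the content of Theorem~\ref{th:transverse}: between two consecutive critical levels each ``band'' of $\mathcal{D}$ contracts to a one-dimensional arc transverse to the vertical lines, and these arcs are precisely the open edges of $G$.

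Third, I would address the preorder on the vertices. By construction the vertices of $G$ are the images $q(p)$ of the finitely many points $p\in\mathcal{C}$ with vertical tangent, and each such vertex lies on a single vertical line, so the value $\tilde{\Pi}$ takes on it is well defined. Define $v \preceq v'$ iff $\tilde{\Pi}(v)\le \tilde{\Pi}(v')$. Since $\le$ on $\mathbb{R}$ is a total preorder (in fact a total order, but distinct vertices may share an $x$-coordinate, which is why we only get a preorder on $G$ and not a genuine order), pulling it back along the well-defined map $v\mapsto \tilde{\Pi}(v)$ yields a total preorder on the vertex set. This matches Definition~\ref{DefReeb}, where the induced preorder on the vertices was already introduced.

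The only point requiring a little care — and the step I expect to be the mild obstacle — is checking that the three statements of the corollary really are about the \emph{same} object: Proposition~\ref{Prop:ReebOfDiskIsTree} was phrased for the graph $G$ produced by the contraction $\pi$ of the disk $\mathcal{D}$, while the transversality statement was phrased for $\tilde{\mathcal{D}} = q(\mathcal{D})$. These are identified because $\pi$ is precisely the restriction of the quotient map $q$ to $\mathcal{D}$ followed by the identification $\tilde P\cong\mathbb{R}^2$, so $G = \tilde{\mathcal{D}}$ as subspaces of the plane, carrying the same distinguished vertices. Once this identification is recorded, the proof is a one-line assembly: tree by Proposition~\ref{Prop:ReebOfDiskIsTree}, plane and transverse by Theorem~\ref{th:transverse}, totally preordered vertices by Definition~\ref{DefReeb} together with the observation above. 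I would therefore write the proof as: ``This is the combination of Theorem~\ref{th:transverse}, Definition~\ref{DefReeb} and Proposition~\ref{Prop:ReebOfDiskIsTree}, using that the Poincaré-Reeb graph $G$ of Proposition~\ref{Prop:ReebOfDiskIsTree} coincides with the subspace $\tilde{\mathcal{D}}\subset\tilde P\cong\mathbb{R}^2$ equipped with its distinguished vertices and the preorder induced by $\tilde\Pi$.''
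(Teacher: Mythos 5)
Your proposal is correct and takes essentially the same route as the paper, which leaves the corollary's proof implicit as a direct combination of Proposition~\ref{Prop:ReebOfDiskIsTree} (tree), Theorem~\ref{th:transverse} (one-dimensionality, transversality, embedding in $\tilde P\cong\mathbb{R}^2$), and Definition~\ref{DefReeb} (the preorder induced by $\tilde\Pi$). Your explicit remark that the graph $G$ of the contraction $\pi$ and the subspace $\tilde{\mathcal{D}}=q(\mathcal{D})$ are the same object with the same distinguished vertices is a sound observation that faithfully records the identification the paper uses without spelling out.
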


\begin{definition}\label{def:transversalTree}
\ \\
$\bullet$ A \defi{transversal tree}\index{tree!transversal tree} is a finite tree embedded in a real oriented plane $P$ endowed with  a trivialisable fibre bundle $\Pi:P\to\mathbb{R}$, such that 
       each one of its edges is transversal to the fibres of $\Pi$.
  \ \\     
       $\bullet$ Two transversal trees $(P_1,T_1)$ and $(P_2, T_2)$ 
       are considered to be \defi{equivalent}\index{tree!transversal tree!equivalent} if there exists an orientation preserving homeomorphism $\psi$ sending one pair 
       into the other one and one fibre bundle  into the other one (in the sense that there exists an orientation-preserving 
       homeomorphism $\lambda : \mathbb{R} \to \mathbb{R}$ such that $\Pi_2 \circ \psi = \lambda \circ \Pi_1$).

\end{definition}

\section{Asymptotic behaviour of curves near a strict local minimum}\label{sect:asymp}

Let us consider a polynomial function $f:\mathbb{R}^2\rightarrow\mathbb{R}$ with a strict local minimum at the origin (see Section \ref{SectHypothesis}). The aim of this section is to study the asymptotic shape of the Poincaré-Reeb trees of sufficiently small levels. We will prove that their shapes stabilise to a limit shape which we shall call the \enquote{asymptotic Poincaré-Reeb tree of the strict minimum with respect to the function $x$}. 

\subsection{Nested smooth Jordan curves}\label{sect:prelim}

The following definition and theorem are well-known:
\begin{definition}\cite[page 19]{Kr}
A \defi{Jordan curve}\index{Jordan curve} is the image of a continuous application $\gamma:[a,b]\rightarrow\mathbb{R}^2$, where $a,b\in \mathbb{R}$ are distinct real numbers, such that $\gamma(a)=\gamma(b)$ and $\gamma_{|[a,b[}$ is injective.
\end{definition}

\begin{theorem}\label{th:Jordan}
The complement in $\bR^2$ of a Jordan curve $J$ consists of two connected components, each of which has $J$ as its boundary (see Figure \ref{fig:jordan}). Both components are path-connected and open and exactly one is unbounded. 
\end{theorem}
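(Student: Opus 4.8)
The plan is to deduce this from the standard separation theorems of algebraic topology, working in the one-point compactification $S^2 = \bR^2 \cup \{\infty\}$, in which $J$ becomes a subspace homeomorphic to the circle $S^1$. First I would dispose of the elementary topological points. Since $[a,b]$ is compact and $\gamma$ continuous, $J$ is compact, hence closed in $\bR^2$, so $\bR^2\setminus J$ is open; as $\bR^2$ is locally path-connected, the connected components of this open set are themselves open and path-connected, which already gives two of the asserted properties. Moreover, for any component $U$ one has $\partial U \subseteq J$: a point of $\overline U\setminus U$ lies in $\overline{\bR^2\setminus J}$, and were it not in $J$ it would lie in the open set $\bR^2\setminus J$ and hence in the open component $U$, a contradiction.

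The count of components then comes from Alexander duality. Since $J\cong S^1$ sits inside $S^2$, Alexander duality gives $\tilde H_0(S^2\setminus J)\cong \tilde H^{1}(S^1)\cong\mathbb{Z}$, so $S^2\setminus J$ has exactly two connected components, both open. Exactly one of them, say $U_\infty$, contains $\infty$; then $U_\infty\setminus\{\infty\}$ is still connected (removing a point from a connected open surface does not disconnect it) and is an unbounded component of $\bR^2\setminus J$, while the other component has boundary contained in $J$, hence does not accumulate at $\infty$, hence is bounded. This yields the statement that $\bR^2\setminus J$ has two components, exactly one unbounded.

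It remains to prove that each component $U$ has all of $J$ as its boundary; together with $\partial U\subseteq J$ this is the heart of the argument. Fix $x\in J$ and an arbitrary neighbourhood $W$ of $x$. Using that $J$ is homeomorphic to a circle, choose a closed subarc $\beta\subsetneq J$ with $x\notin\beta$ such that the complementary open arc $\alpha=J\setminus\beta$ satisfies $x\in\alpha\subseteq W$. By Alexander duality again, $\tilde H_0(S^2\setminus\beta)\cong\tilde H^{1}(\beta)=0$ because an arc is contractible, so $S^2\setminus\beta$, and therefore also $\bR^2\setminus\beta$, is connected and open, hence path-connected. Pick points $u$ and $v$ in the two distinct components of $\bR^2\setminus J$ and join them by a path in $\bR^2\setminus\beta$. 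Since $u$ and $v$ lie in different components of $\bR^2\setminus J$, this path must meet $J$, and as it avoids $\beta$ it must meet $\alpha\subseteq W$; the portion of the path preceding its first intersection with $J$ is connected, disjoint from $J$, and contains $u$, hence lies in the component of $u$, so its endpoint exhibits a point of $\alpha\subseteq W$ in the closure of that component. Running this with $u$ in the role of a point of $U$ shows $x\in\partial U$, and symmetrically for the other component; since $W$ was arbitrary we conclude $J\subseteq\partial U$ for each of the two components, hence $\partial U=J$.

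The main obstacle is precisely the input used twice above: that a circle separates $S^2$ into exactly two pieces and that an arc does not separate $S^2$ at all. These are exactly the substantive content of the Jordan curve theorem and lie beyond point-set topology; the proof above outsources them to Alexander duality, but if one insisted on a self-contained treatment one would have to supply them by hand, for instance through a Brouwer-degree or winding-number argument, or through a nested-bisection argument on subarcs, which is the classical difficult kernel of the result.
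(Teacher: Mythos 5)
Your argument is correct, but note that the paper does not actually prove Theorem \ref{th:Jordan}: it is quoted as a classical result and the proof is delegated to the references (\cite{Wa1}, pages 119 and 133), so there is no internal proof to compare against. What you have written is the standard modern proof of the Jordan curve theorem via Alexander duality in $S^2=\bR^2\cup\{\infty\}$: duality for $J\cong S^1$ gives exactly two components, duality for a closed subarc $\beta$ gives non-separation and hence $\partial U=J$, and the point-set facts (openness and path-connectedness of components, $\partial U\subseteq J$, boundedness of the component not containing $\infty$) are handled correctly. You also correctly isolate where the real depth lies and acknowledge that you are outsourcing it to duality rather than proving it from scratch; this is an honest and legitimate structure for the proof. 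Two small points you would want to make explicit in a polished version: first, that $J$ is indeed homeomorphic to $S^1$ under the paper's definition (the induced map from $[a,b]/(a\sim b)\cong S^1$ is a continuous bijection from a compact space to a Hausdorff space, hence a homeomorphism), since this is what licenses both applications of duality and the choice of the subarcs $\alpha$ and $\beta$; and second, that Alexander duality applies because $J$ and $\beta$ are compact and locally contractible subsets of $S^2$. With those remarks added, the proof is complete and is, if anything, more self-contained than the paper's treatment, at the cost of invoking machinery (singular (co)homology and duality) that the paper avoids by citation.
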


Proofs of Theorem \ref{th:Jordan} can be found in \cite[pages 119, 133]{Wa1}.

\begin{figure}[H]
\centering
\includegraphics[scale=0.17]{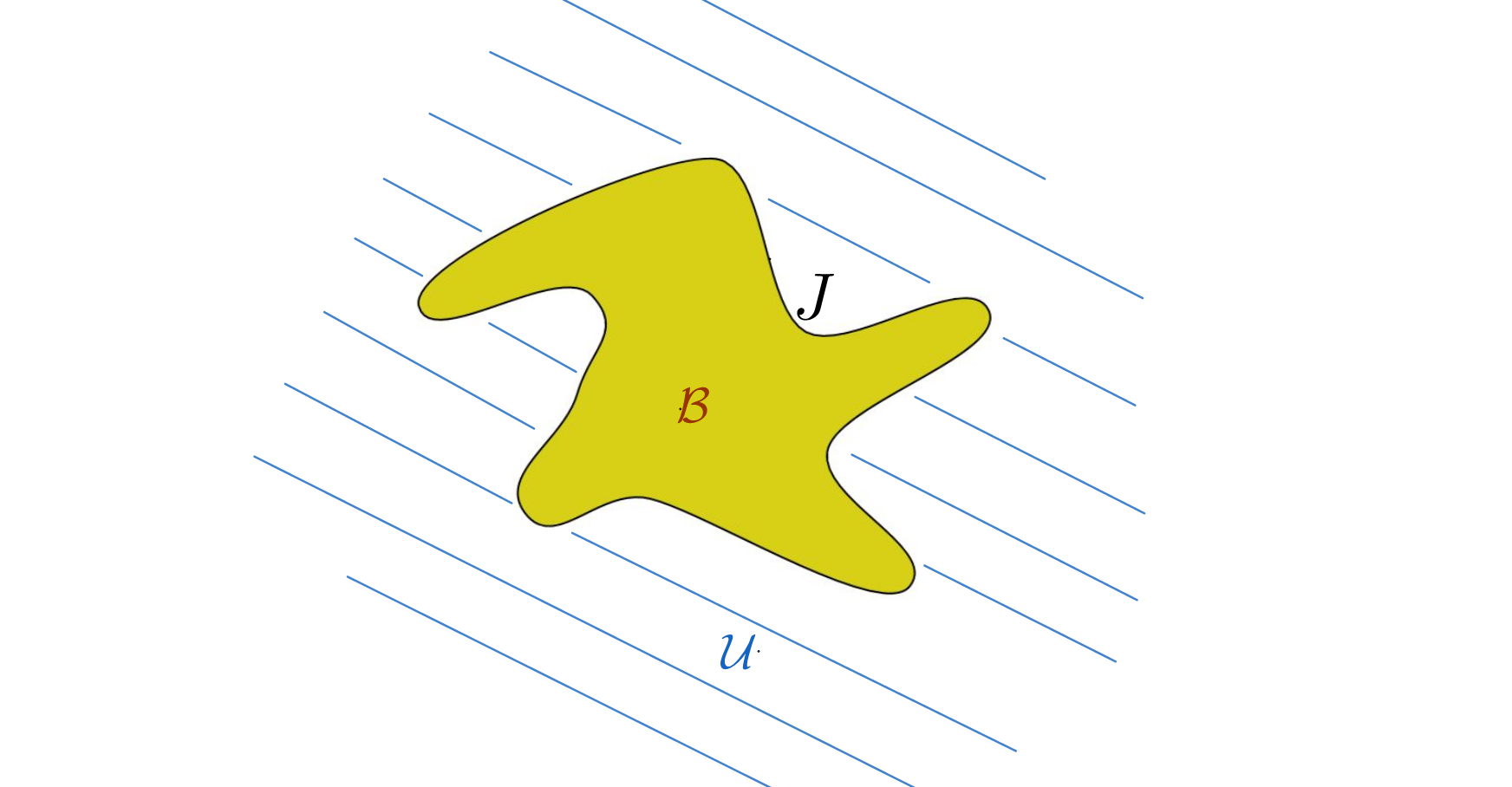} 
\caption{A Jordan Curve $J$: the bounded component of the real plane is $\mathcal{B}$, while the unbounded component is $\mathcal{U}$.\label{fig:jordan}}
\end{figure}

\begin{lemma}\label{lemma:origInInterior}
Under the notations and hypotheses listed in Section \ref{SectHypothesis}, for all sufficiently small $\varepsilon > 0$, the curve $\mathcal{C}_\varepsilon$ is a Jordan curve; more precisely, $\mathcal{C}_\varepsilon$ is diffeomorphic to $S^1$ and $\mathcal{D}_\varepsilon$ is diffeomorphic to a disk. If one denotes by $\Int \mathcal{C}_\varepsilon$ the bounded connected component of $\bR^2\setminus \mathcal{C}_\varepsilon$, then $(0,0)\in\Int \mathcal{C}_\varepsilon.$

\end{lemma}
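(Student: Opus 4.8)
The plan is to localise the problem in a small Euclidean ball around the origin, exploit that \(f\) is a polynomial to secure a good regular value \(\varepsilon\), and then recognise \(\mathcal{D}_\varepsilon\) as a smoothly embedded disk by a fibration argument. First I would shrink \(V\) so that \(f>0\) on \(V\m\{(0,0)\}\) (possible since \((0,0)\) is a strict local minimum and \(V\cap(f=0)=\{(0,0)\}\)). Fixing \(r>0\) with \(\overline{B_r}\subset V\), where \(B_\rho\) denotes the ball of radius \(\rho\) centred at the origin, and letting \(\mu(\rho)\) be the minimum of \(f\) over the circle \(\partial B_\rho\) (which is positive for \(0<\rho\le r\)), I observe that for \(0<\varepsilon<\mu(r)\) the set \(f^{-1}([0,\varepsilon])\) misses \(\partial B_r\); hence the connected component \(\mathcal{D}_\varepsilon\) of \(f^{-1}([0,\varepsilon])\) through the origin — a closed, and therefore compact, subset of \(\bR^2\) — is contained in \(B_r\subset V\). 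Running the same estimate over all \(\rho\le r\) shows in addition that the sets \(\mathcal{D}_\varepsilon\) shrink to \(\{(0,0)\}\) as \(\varepsilon\to 0^+\).

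Next I would use polynomiality twice. By the curve-selection lemma the origin is an \emph{isolated} critical point of \(f\): otherwise \(\Sing f\) would contain a real-analytic arc issuing from the origin, along which \(f\) is forced to be the constant \(f(0,0)=0\), contradicting \(V\cap(f=0)=\{(0,0)\}\); after shrinking \(r\) we may thus assume \(\Sing f\cap B_r=\{(0,0)\}\). Also \(f\) has only finitely many critical values (Sard's theorem, or semialgebraicity of \(f(\Sing f)\subset\bR\)), so choosing \(\varepsilon\) smaller than both \(\mu(r)\) and the smallest positive critical value makes \(\varepsilon\) a regular value with no critical point of \(f\) in \(f^{-1}((0,\varepsilon])\). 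For such an \(\varepsilon\) a short verification — a point of \(\mathcal{D}_\varepsilon\) with \(f<\varepsilon\) is interior (the component of that point in the open set \(B_r\cap f^{-1}([0,\varepsilon))\) is a neighbourhood inside \(\mathcal{D}_\varepsilon\)), while a point with \(f=\varepsilon\) cannot be a local maximum of \(f\) on an open set — yields \(\mathcal{C}_\varepsilon=\mathcal{D}_\varepsilon\m\Int\mathcal{D}_\varepsilon=f^{-1}(\varepsilon)\cap\mathcal{D}_\varepsilon\); since \(\varepsilon\) is regular, \(\mathcal{D}_\varepsilon\) is locally modelled near \(\mathcal{C}_\varepsilon\) on a closed half-plane. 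Thus \(\mathcal{D}_\varepsilon\) is a compact connected smooth surface with boundary, its boundary being a finite disjoint union of smooth circles.

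To identify \(\mathcal{D}_\varepsilon\) I would apply Ehresmann's fibration theorem to \(f\) on \(f^{-1}((0,\varepsilon])\cap\mathcal{D}_\varepsilon\): this is a proper submersion onto \((0,\varepsilon]\), with no critical points and restricting to the identification of the boundary \(\mathcal{C}_\varepsilon=f^{-1}(\varepsilon)\cap\mathcal{D}_\varepsilon\) with \(\{\varepsilon\}\), hence a trivial bundle, so there is a diffeomorphism \(\mathcal{D}_\varepsilon\m\{(0,0)\}\cong\mathcal{C}_\varepsilon\times(0,\varepsilon]\) under which \(f\) becomes the second projection. Since the origin is an interior point, a round disk \(B_\rho\subset\mathcal{D}_\varepsilon\) with its centre removed is connected, hence lies in a single slab \(C\times(0,\varepsilon]\) over one component \(C\) of \(\mathcal{C}_\varepsilon\); but for small \(s\) one has \(\mathcal{C}_s\subset B_\rho\) and, in the trivialisation, \(\mathcal{C}_s\) meets every slab, so \(\mathcal{C}_\varepsilon\) has exactly one component, i.e.\ \(\mathcal{C}_\varepsilon\cong S^1\). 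Extending the trivialisation continuously by sending the origin to the collapsed \(s\to 0\) end then exhibits \(\mathcal{D}_\varepsilon\) as the cone over \(S^1\), that is, the closed disk, and being a smooth surface it is diffeomorphic to \(D^2\). (An alternative for this last point: a maximum/minimum argument shows that \(f\) would be constant equal to \(\varepsilon\) on the closure of any bounded component of \(\bR^2\m\mathcal{D}_\varepsilon\), which is impossible for a non-constant polynomial; hence \(\bR^2\m\mathcal{D}_\varepsilon\) is connected, \(\mathcal{D}_\varepsilon\) is simply connected, and a simply connected compact surface with boundary is a disk.) Finally \(f(0,0)=0<\varepsilon\) puts the origin in \(\Int\mathcal{D}_\varepsilon\), and by the Jordan–Schoenflies theorem \(\Int\mathcal{D}_\varepsilon\) is precisely the bounded component of \(\bR^2\m\mathcal{C}_\varepsilon\), so \((0,0)\in\Int\mathcal{C}_\varepsilon\).

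The step I expect to be the main obstacle is this last one: upgrading the purely \emph{local} differential-topological input (regular level, isolated critical point, local half-plane models) to the \emph{global} conclusion that \(\mathcal{D}_\varepsilon\) has a single boundary circle and no handles. The Ehresmann trivialisation — or, equivalently, the maximum principle for \(f\) on the complementary regions — is exactly what bridges this gap; the preliminary localisation and the regularity facts are routine properties of analytic, respectively polynomial, functions near an isolated zero.
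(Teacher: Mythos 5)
Your proof is correct, but it reaches the crucial global conclusion by a different route than the paper. The paper's proof runs in four steps: it shows the level set near the origin is a $1$-manifold without boundary, shows it is compact and contained in a small disk via a subsequence/contradiction argument, concludes it is a disjoint union of circles, and then rules out two or more circles by observing that any two components would have to be nested (each must encircle the origin) and that the Extreme Value Theorem applied to the closed annulus between them would force a second critical point of $f$, contradicting the uniqueness of the singularity near $O$. You instead trivialise $\mathcal{D}_\varepsilon\setminus\{(0,0)\}$ over $(0,\varepsilon]$ by Ehresmann and count boundary components via connectedness, then identify $\mathcal{D}_\varepsilon$ as a cone over $S^1$; your parenthetical alternative (a bounded complementary component would force $f\equiv\varepsilon$ on an open set or a second critical point) is the variant closest in spirit to the paper's annulus argument. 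Both approaches ultimately rest on the same fact — the origin is the only nearby critical point — but yours buys two things the paper treats lightly: you justify via the curve selection lemma that the origin is an \emph{isolated critical point} (the stated hypotheses only give an isolated minimum and an isolated zero), and you invoke the finiteness of critical values of a polynomial to guarantee that small $\varepsilon$ are regular values; your containment $\mathcal{D}_\varepsilon\subset B_r$ via $\min_{\partial B_\rho}f>0$ is also more direct than the paper's sequential compactness argument. One small imprecision: in the slab-counting step the set that visibly meets every slab is the full fibre $f^{-1}(s)\cap\mathcal{D}_\varepsilon$, not a priori $\mathcal{C}_s$; this is harmless, and in fact the detour through $B_\rho$ is unnecessary, since $\mathcal{D}_\varepsilon\setminus\{(0,0)\}$ is already connected (a connected surface minus an interior point), which forces a single slab at once.
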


\begin{proof}
Let us start by giving a slightly different definition of  $\mathcal{C}_\varepsilon$. At the end of this proof, we will have justified Definition \ref{def:levelCEpsilon}.

Suppose that $\mathcal{C}_\varepsilon$ is the union of connected components of $f^{-1}(\varepsilon),$ near the origin.

\textbf{First step: let us prove that $\mathcal{C}_\varepsilon$ is a one-dimensional manifold.}

By hypothesis (Section \ref{SectHypothesis}), $\mathcal{C}_{0}=\{(0,0)\}$ is an isolated local minimum of $f$, thus for a sufficiently small $\varepsilon$, the level curve $\mathcal{C}_\varepsilon$ has no critical points. Therefore, $\mathcal{C}_\varepsilon$ is a one-dimensional manifold. By \cite[Appendix \enquote{Classifying 1-manifolds}, page 65]{Mi1}, the manifold $\mathcal{C}_\varepsilon$ is diffeomorphic to a disjoint union of circles $S^1$, or of some intervals of real numbers: the real line $\bR$, the half-line $\bR_{+}$ or the closed interval $[0,1]$.

\textbf{Second step: we prove that
one cannot obtain components of $\mathcal{C}_\varepsilon$ which are diffeomorphic to  closed or semiclosed segments in $\mathbb{R}$.}

Let us show that $\mathcal{C}_\varepsilon$ has no connected components diffeomorphic to $[0,1]$ or $[0,+\infty[$. The reader should recall that $0<\varepsilon\ll 1$ is a regular value of $f$, thus (see \cite[Preimage Theorem, page 21]{GP}) the set $\mathcal{C}_\varepsilon=f^{-1}(\varepsilon)$ is a submanifold of $\bR^2$ of dimension $1$. 
Let $P\in \mathcal{C}_\varepsilon$ be a point. If $B_{P}$ is an open disk centered at $P$, then
$\mathcal{C}_\varepsilon\cap B_{P}$ is diffeomorphic to an open interval $]a,b[$, where the image of $P$ is in $]a,b[$. Since the point $P$ was chosen arbitrarily, we conclude that one cannot obtain connected components of $\mathcal{C}_\varepsilon$ diffeomorphic to either closed or semiclosed segments in $\mathbb{R}$. 

\textbf{Third step: 
let us prove that $\mathcal{C}_\varepsilon$ is a disjoint union of connected components which are all diffeomorphic to the circle $S^1.$}

We fix $r\in\bR$ small enough, $r>0$ and we fix the neighbourhood $V$ to be $D_r:=\{(x,y)\in\mathbb{R}^2\mid \vert \vert (x,y)\vert \vert\leq r\}$ such that $(0,0)$ is the only critical point of $f$ in the disk $D_{r}.$
Firstly, we have $\mathcal{C}_\varepsilon$ is a closed set, since the preimage of a closed set by the continuous function $f$ is closed. Since $\mathcal{C}_\varepsilon$ is also bounded, it is compact.

Secondly, we will show that $\mathcal{C}_\varepsilon\cap\partial D_r=\emptyset$. Since for $\varepsilon$ sufficiently small, the curve $(f=\varepsilon)$ lies sufficiently close to the origin $(0,0),$ there exists $0<\varepsilon_1\ll 1$ such that $\mathcal{C}_\varepsilon\cap D_{\frac{r}{2}}\neq \emptyset$ for all $\varepsilon<\varepsilon_1.$ We want to prove that there exists $0<\varepsilon_2\ll \varepsilon_1$ such that $\mathcal{C}_\varepsilon\subset D_{\frac{r}{2}}$ for all $\varepsilon<\varepsilon_2.$ We argue by contradiction. Suppose there exists a sequence $\varepsilon_n\rightarrow 0$ such that $\mathcal{C}_{\varepsilon_n}\not\subset D_{\frac{r}{2}},$ namely $\mathcal{C}_{\varepsilon_n}\cap D_r\setminus D_{\frac{r}{2}}\neq\emptyset.$ Hence, for any $\varepsilon_n,$ there exists a point $(x_{\varepsilon_n},y_{\varepsilon_n})\in\mathcal{C}_{\varepsilon_n}$, namely $f(x_{\varepsilon_n},y_{\varepsilon_n})=\varepsilon_n$, such that $(x_{\varepsilon_n},y_{\varepsilon_n})\in {D_{r}}\setminus D_{\frac{r}{2}}.$ If $\varepsilon:=\frac{1}{n}$, then one obtains a sequence $(x_n,y_n)$ with the following properties:
\begin{enumerate}
\item $(x_n,y_n)\in D_{r}\setminus D_{\frac{r}{2}};$
\item $f(x_n,y_n)=\frac{1}{n}$.
\end{enumerate}
Since $D_{r}\setminus D_{\frac{r}{2}}$ is a compact, there exists a convergent subsequence $(x_{\phi(n)},y_{\phi(n)})$ of 
$(x_{n},y_{n})$. More precisely, $\phi(n)$ is a strictly increasing and unbounded sequence of natural numbers. Let us denote by 
 $\lim_{n\rightarrow\infty}(x_{\phi(n)},y_{\phi(n)})$ 
 $=(x_\infty,y_\infty)$ 
 $\in\overline{D_{r}}\setminus D_{\frac{r}{2}}.$ One obtains $f(x_{\phi(n)},y_{\phi(n)})=\frac{1}{\phi(n)}$ and by the continuity of $f$, $f(x_\infty,y_\infty)=\lim_{n\rightarrow\infty}f(x_{\phi(n)},y_{\phi(n)})=\lim_{n\rightarrow\infty}\frac{1}{\phi(n)}=0.$ Therefore, there exists $(x_\infty,y_\infty)\neq (0,0)$, $(x_\infty,y_\infty)\in D_{r}$, such that $f(x_\infty,y_\infty)=0.$ This gives a contradiction with our hypothesis that $(0,0)$ is the isolated local minimum of $f$ in $D_{r}.$ Hence, $\mathcal{C}_\varepsilon$ is bounded. 

In conclusion, we proved that $\mathcal{C}_\varepsilon\cap\partial D_r=\emptyset$. Hence we conclude that the level set $\mathcal{C}_\varepsilon$ could be a disjoint union of connected components which are either diffeomorphic to the segment $[0,1]$, or to the circle $S^1.$ Since we have excluded the closed segments at the first step of our proof, $\mathcal{C}_\varepsilon$ is a disjoint union of connected components which are all diffeomorphic to the circle $S^1.$

\textbf{Fourth and last step: we prove that 
$\mathcal{C}_\varepsilon\stackrel{\text{diffeo}}{\simeq}S^1.$} 

We argue by contradiction. Let us suppose that $\mathcal{C}_\varepsilon$ is a disjoint union of at least two distinct connected components, say $\mathrm{S}_{1}$ and $\mathrm{S}_{2}$, which are both  diffeomorphic to the circle $S^1$. By the Extreme Value Theorem, in the interiors of both these two connected components there is a local extremum of $f$. Since locally the origin is the only critical point in $V$ that $f$ admits by hypothesis, we obtain $(0,0)\in \Int \mathrm{S}_1$ and $(0,0)\in \Int \mathrm{S}_2$. Since $\mathcal{C}_\varepsilon$ is not self-intersecting, the configuration from Figure \ref{fig:unite}(a) below is impossible.

\vspace{-\baselineskip}
\begin{figure}[H]
\centering
\begin{subfigure}[b]{0.45\textwidth}
\includegraphics[scale=0.145,trim={0 0cm 0 6cm}, clip]{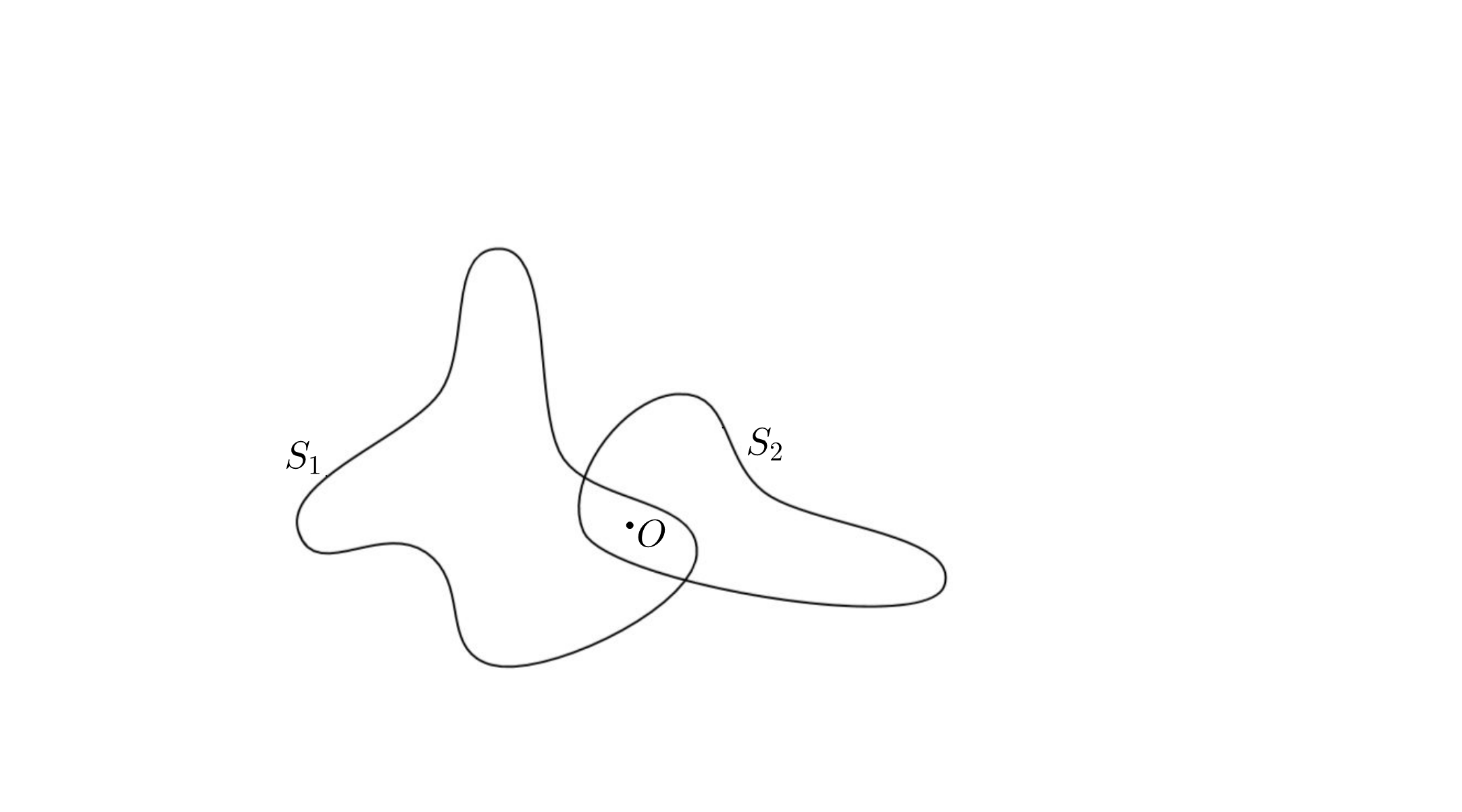} 
\caption{Impossible configuration for $\mathcal{C}_\varepsilon$.\label{fig:config2CercuriImpos}}
\end{subfigure}
\begin{subfigure}[b]{0.45\textwidth}
\includegraphics[scale=0.16,trim={0 0cm 0 6cm}, clip]{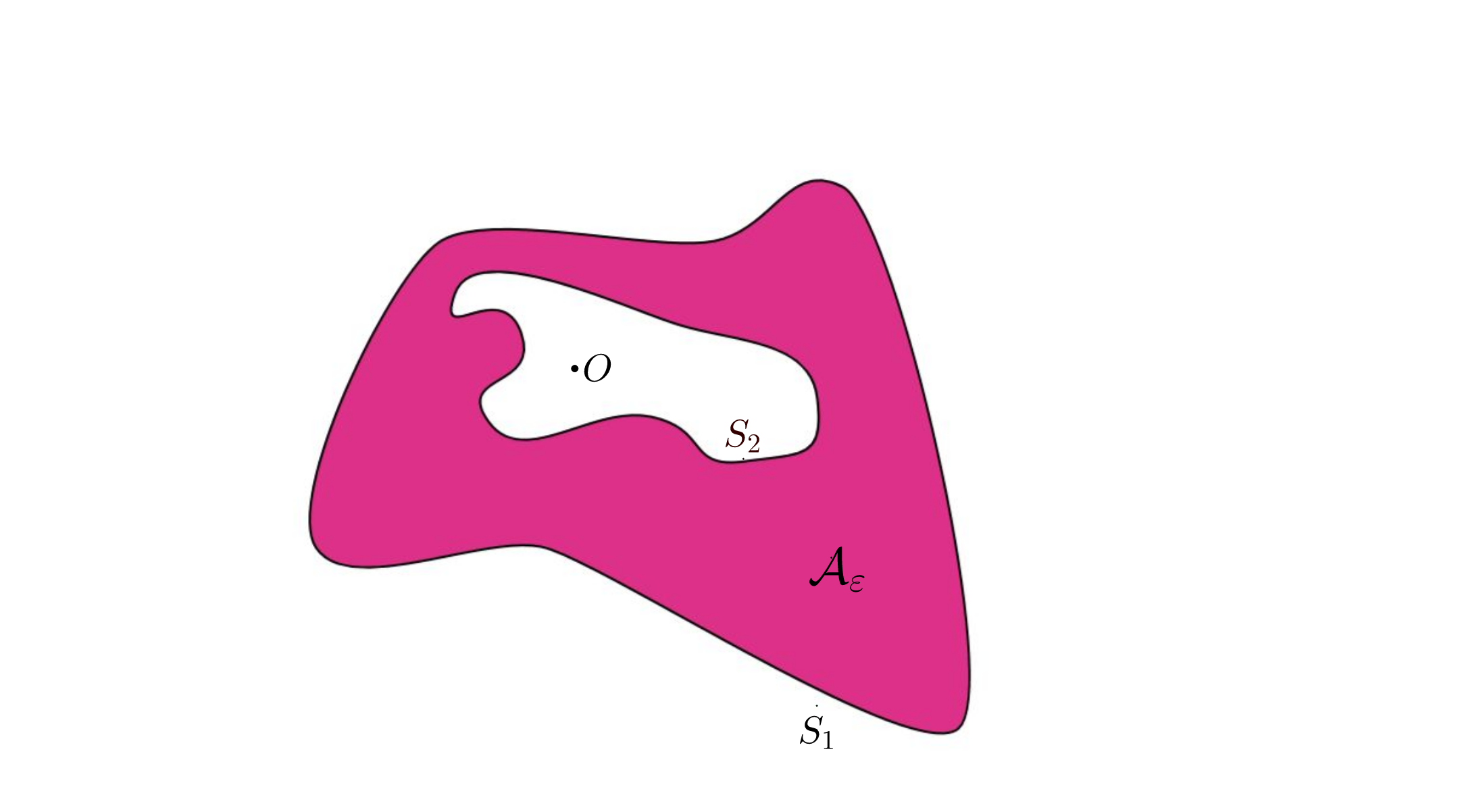} 
\caption{The annulus $\mathcal{A}_\varepsilon$ in pink.\label{fig:2circles}}
\end{subfigure}
\caption{Configurations for $\mathrm{S}_1$ and $\mathrm{S}_2$.\label{fig:unite}}
\end{figure}

 Hence the only two possible situations would be either  $\mathrm{S}_{1}\subset \Int \mathrm{S}_{2}$ or $\mathrm{S}_{2}\subset \Int \mathrm{S}_{1}.$ Without any loss of generality, let us choose $\mathrm{S}_{2}\subset \Int \mathrm{S}_{1}.$ We have $(0,0)\in \Int \mathrm{S}_{2}.$ Let us consider the closed annulus bounded by the two circles, namely $\mathcal{A}_{\varepsilon}:=\mathrm{S}_{1}\cup(\Int \mathrm{S}_{1}\setminus \Int \mathrm{S}_{2}).$ See Figure \ref{fig:unite}(b) above.

Since $\mathcal{A}_{\varepsilon}$ is compact, by the Extreme Value Theorem, the function $f$ must attain a maximum value  and a minimum value on $\mathcal{A}_{\varepsilon}$. We recall that $f$ is not constant, since $(0,0)=f^{-1}(0)$ is a strict local minimum of $f$. If one extremum value is attained by $f$ on $\mathrm{S}_1$ and on $\mathrm{S}_2$ ($f=\varepsilon$ both on $\mathrm{S}_1$ and on $\mathrm{S}_2$), then the other extremum value will necessarily be attained in $\Int \mathcal{A}_\varepsilon$. Hence we obtain a contradiction with the hypothesis of the unique singularity of $f$ in a neighbourhood of the origin. Therefore, we conclude that $\mathcal{C}_\varepsilon$ cannot have more than one connected component diffeomorphic to the circle $S^1$, thus $\mathcal{C}_\varepsilon\stackrel{\text{diffeo}}{\simeq}S^1$ and $(0,0)\in \mathcal{C}_\varepsilon.$

\end{proof}

\subsection{The polar curve}
The notion of a polar curve goes back to the XIXth century, appearing in the work of J.-V. Poncelet (\cite{Pon}) and M. Plücker (\cite{Pl}) in the period of 1813-1830 (see for instance \cite{Te1}, \cite{Te2}, \cite{FT}). Starting with the 1970s, the theory of polar curves\index{polar curve} (see \cite[page 589]{BK}) was renewed by important investigations and
contributions due to L\^ e (\cite{Le1}), Teissier, Merle, Eggers, Delgado, García Barroso (\cite{GB1}, \cite{GB2}, \cite{GB3}), P\l oski, Gwo\' zdziewicz, Casas-Alvero, Michel, Weber, Maugendre (\cite{Mau}), Hefez among others. Considerable research on the polar curves has been carried out in the complex setting. However, less is known about the real polar curves.

\begin{definition}\cite[Section 4]{GB1}\label{DefPolCurve}
Let $f:\bR^2\rightarrow\bR$ be a polynomial function. 
The set 
$$\Gamma(f,x)=\left \{(x,y)\in\bR^2 \mid\frac{\partial f}{\partial y}(x,y)=0\right \}$$
is called \defi{the polar curve of} $f$ \defi{with respect to}\index{polar curve} $x.$
\end{definition}

\begin{remark}

 The polar curve $\Gamma(f,x)$ consists of the points $(x,y)\in\bR^2$ where the level curves of $f$ have a vertical tangent. Note that the vertices of the Poincaré-Reeb tree roughly speaking correspond to points on the level curve with vertical tangent lines. When we study the family of level curves, the points of vertical tangency move along the branches of the polar curve $\Gamma(f,x)$. 

\end{remark}

However, if $f$ has a strict local minimum, then the polar curve does not contain the projection direction (see Proposition \ref{prop:projDirNot}).
\begin{proposition}\label{prop:projDirNot}
If $f:\mathbb{R}^2\rightarrow\mathbb{R}$ has a strict local minimum at the origin and $f(0,0)=0$, then the polar curve $\Gamma(f,x)$ does not contain the line $x=0.$
\end{proposition}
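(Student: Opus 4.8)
The plan is to argue by contradiction: suppose the line $(x=0)$ is contained in the polar curve $\Gamma(f,x)$, i.e. $\frac{\partial f}{\partial y}(0,y)=0$ for all $y$ in a neighbourhood of $0$. Since $f$ is a polynomial, this forces $f(0,y)$ to be constant in $y$ near the origin, hence $f(0,y)\equiv f(0,0)=0$ for all small $y$. But this contradicts the hypothesis that the origin is a \emph{strict} local minimum of $f$: along the vertical axis $f$ takes the value $0=f(0,0)$ at points $(0,y)\neq(0,0)$ arbitrarily close to the origin, so the minimum at the origin is not strict.

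More precisely, I would first observe that $\frac{\partial f}{\partial y}(0,\cdot)$ is a univariate polynomial in $y$; if the whole line $(x=0)$ lies in $\Gamma(f,x)$, then this polynomial has infinitely many zeros and is therefore identically zero. Consequently $y\mapsto f(0,y)$ has zero derivative everywhere, so it is a constant polynomial, equal to its value at $y=0$, which is $f(0,0)=0$. Then I would invoke the definition of a strict local minimum from Section \ref{SectHypothesis}: there should be a neighbourhood of the origin on which $f(P)>f(0,0)=0$ for every $P\neq(0,0)$; taking $P=(0,y)$ with $y$ small and nonzero gives $f(P)=0$, a contradiction. Hence no neighbourhood of the origin is entirely contained in $\Gamma(f,x)\cap(x=0)$, and since $\Gamma(f,x)$ is algebraic, containing the germ of the line at the origin is equivalent to containing the whole line; so $(x=0)\not\subseteq\Gamma(f,x)$.

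There is essentially no hard step here — the argument is a short rigidity observation about polynomials plus the definition of strictness. The only point deserving a little care is the passage from \enquote{the line $(x=0)$ is contained in $\Gamma(f,x)$} to \enquote{$\frac{\partial f}{\partial y}(0,y)=0$ identically}: since $\Gamma(f,x)=\{\frac{\partial f}{\partial y}=0\}$ is cut out by a single polynomial, the line $(x=0)$ lies in it if and only if $\frac{\partial f}{\partial y}$ vanishes on all of $(x=0)$, which is exactly what we need. (If one only knew that the germ of the line at the origin lies in $\Gamma(f,x)$, the same conclusion follows because a univariate polynomial vanishing on a neighbourhood of a point vanishes identically.) Everything else is immediate.
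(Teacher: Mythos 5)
Your proof is correct and follows essentially the same route as the paper: assume $(x=0)\subseteq\Gamma(f,x)$, deduce that $f$ vanishes identically on the line $x=0$ (the paper does this by writing $\frac{\partial f}{\partial y}=xg$ and integrating to get $f=x\varphi$; you integrate only along the axis, which is marginally more direct), and contradict the strictness of the minimum. No gaps.
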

\begin{proof}
We argue by contradiction. If the line $x=0$ is contained in 
      $\Gamma(f,x)$, then we may write $\frac{\partial f}{ \partial  y}(x,y)=xg(x,y)$, where $g\in\mathbb{R}[x,y].$ Then $f(x,y)=f(x,0)+\int_{0}^y xg(x,t)\mathrm{d}t=x\varphi(x,y),$ since $f(0,0)=0$. Hence $f$ does not have a strict local minimum at the origin, because it vanishes on $x=0$. Contradiction. 
\end{proof}

\begin{example}
In Figure 2.22 is represented the polar curve $\Gamma(f,x)$ of Coste’s example (see Example \ref{ex:Coste}: $f(x,y):=x^2+(y^2-x)^2$). It has two irreducible components, with equations $(y=0)\cup (y^2-x=0)$.
 
\begin{figure}[H]
\centering
\includegraphics[scale=0.3]{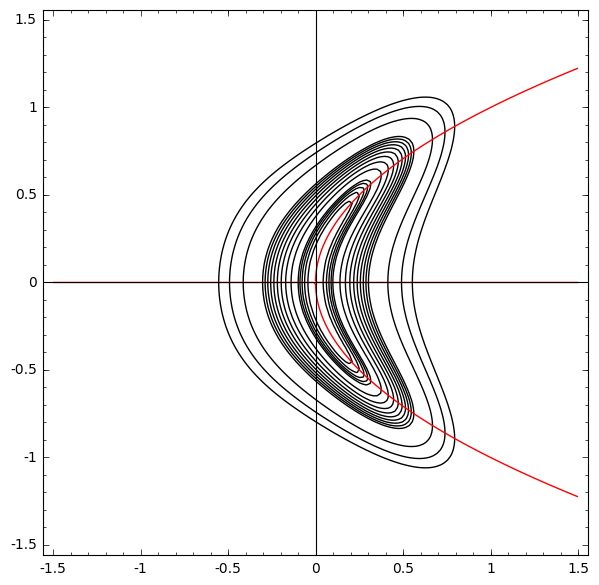} 
\caption{The polar curve $(y=0)\cup (y^2-x=0)$ for Example \ref{ex:Coste}, namely Coste's example\label{fig:polarBanana}}
\end{figure}
\end{example}

\begin{example}
Recall our Example \ref{ex:unu}: $f(x,y):=x^{16}+(y^2+x)^2 (y^2-x)^2.$ In Figure \ref{fig:polarBone}, the polar curve $\Gamma(f,x)$ has three components $(y=0)\cup (y^2+x=0) \cup (y^2-x=0)$.
 
\begin{figure}[H]
\centering
\includegraphics[scale=0.3]{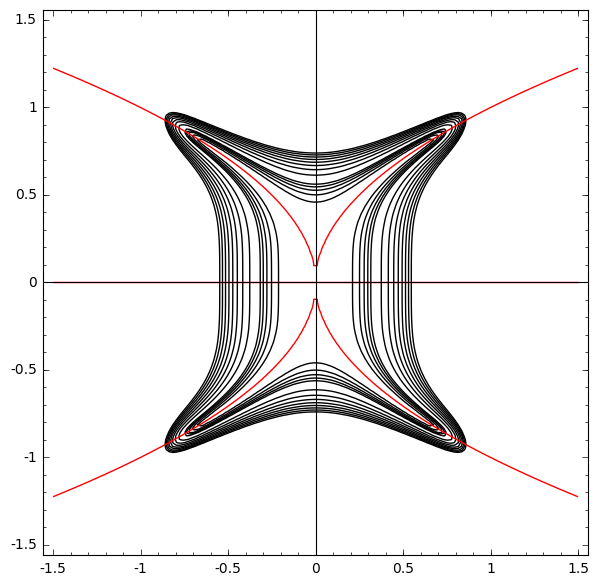} 
\caption{The polar curve $(y=0)\cup (y^2+x=0) \cup (y^2-x=0)$ for Example \ref{ex:unu}.\label{fig:polarBone}}
\end{figure}
\end{example}

\begin{example}\label{zero}
We can create new examples starting from Coste's example by taking the parametrisation of a branch $\gamma$ of the polar curve $\Gamma(f,x)$, namely  $t\mapsto(t^2,t)$, and modify it, as follows. 

Consider the new parametrisation $(t\mapsto p(t),q(t))$, where $p(t)=t^2+t^3$ and $q(t):=t$. Let us compute the resultant of the polynomials $g(t):=x-p(t)$ and $h(t):=y-q(t)$, in order to find the implicit polynomial whose zero locus is $\gamma.$ We have the resultant (see for instance \cite[page 178]{BK}) $$\mathrm{Res}_t(x,y)=\det \begin{bmatrix}
    -1 & -1 & 0 & x \\
    -1 & y & 0 & 0 \\
    0 & -1 & y & 0 \\
    0 & 0 & -1 & y 
\end{bmatrix}=x-y^3-y^2.$$

Thus, (see \cite[page 181]{BK}) we have a new algebraic branch $\tilde{\gamma}:=\{(x,y)\in\mathbb{R}^2\mid x-y^3-y^2=0\}.$

Take now $\tilde{f}(x,y):=x^2+(x-y^3-y^2)^2(x-y^2)^2$ and obtain the local situation of the polar curve in a small enough neighbourhood of the origin as presented in Figure \ref{fig:deform} below. 

\begin{figure}[H]
\centering
\includegraphics[scale=0.07]{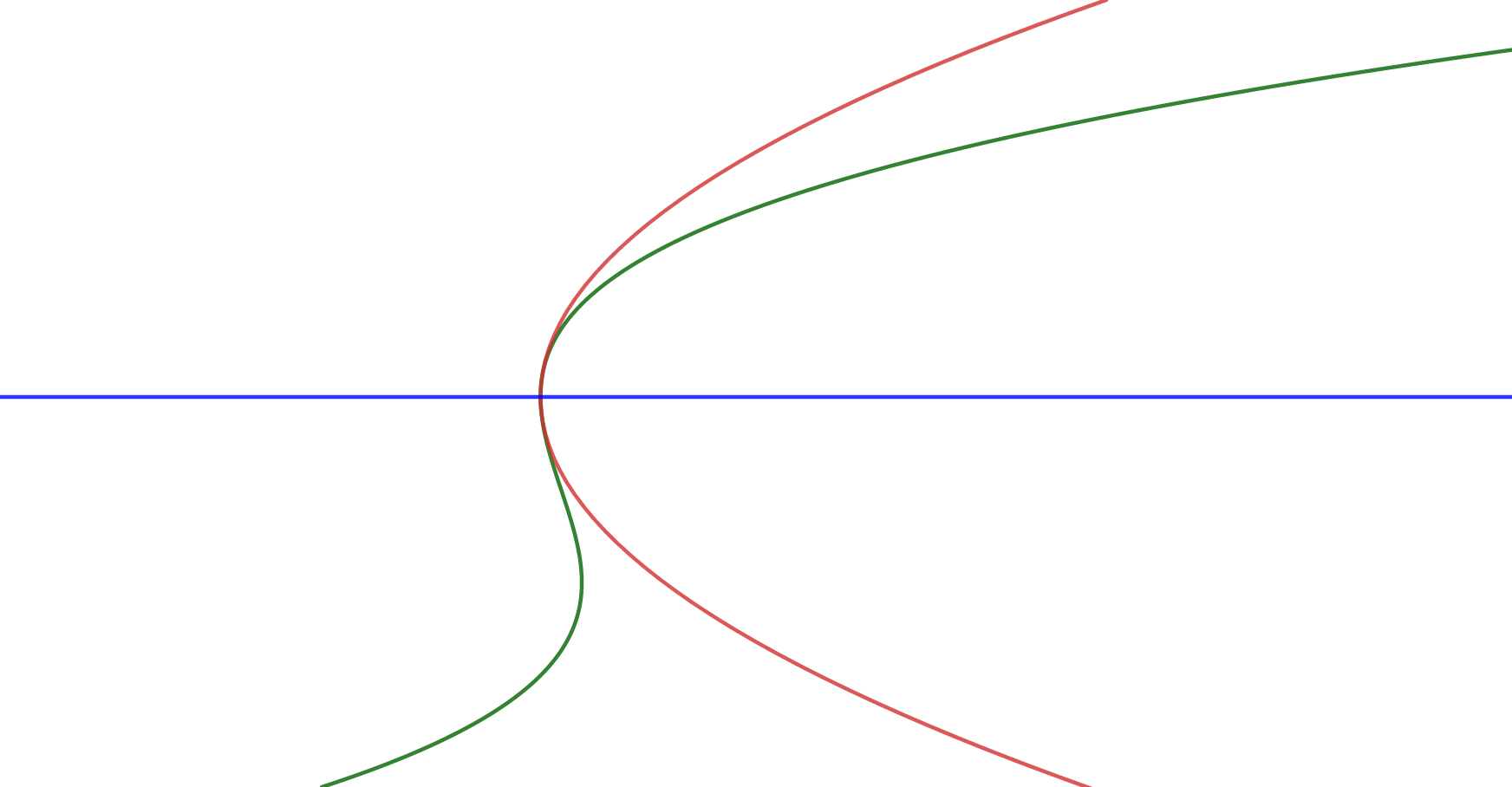} 
\caption{We modified the initial parabola $\gamma$ (in red) into $\tilde{\gamma}$ (in green), both branches of the polar curve $\Gamma(\tilde{f},x)$, to obtain a different local shape of $\mathcal{C}_\varepsilon$ when compared to the initial one in Coste's example.\label{fig:deform}}
\end{figure}
\end{example}

\begin{example}
Recall our Example \ref{ex:doi}: $f(x,y):=x^{6}+(y^4+y^2-x)^2 (y^2-x)^2$. In Figure \ref{fig:polarDoubleBanana}, the polar curve $\Gamma(f,x)$ has four components.
 
\begin{figure}[H]
\centering
\includegraphics[scale=0.3]{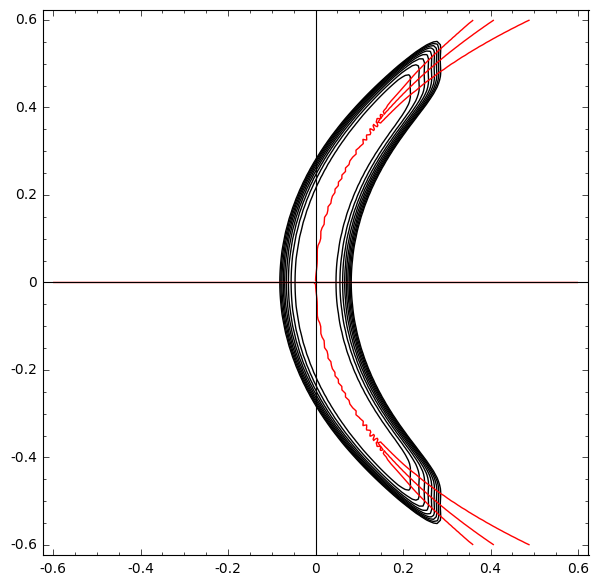} 
\caption{The polar curve for our Example \ref{ex:doi}.\label{fig:polarDoubleBanana}}
\end{figure}
\end{example}

For more details about Lemma \ref{lemma:halfBr} below, we refer to \cite[page 105]{Gh1}.

\begin{lemma}\cite{Mi2}\label{lemma:halfBr}
Let $\Gamma$ be a real algebraic curve. Firstly, there exists a Milnor disk\index{Milnor disk} for $ \Gamma $ at the origin: a Euclidean disk
       such that the boundaries of all the concentric disks inside the Milnor disk are transverse to the curve. Secondly, if the curve is analytically irreducible at the origin, then in such a disk the curve is homeomorphic to a segment. 
\end{lemma}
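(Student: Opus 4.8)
The plan is to prove the two assertions separately: the first is the classical non‑accumulation of critical values of the squared‑distance function near an isolated singularity, obtained from the curve selection lemma; the second follows by reading that property along a Newton--Puiseux parametrisation of the irreducible branch.

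\emph{Existence of a Milnor disk.} I would consider the semialgebraic function $\rho\colon\bR^2\to\bR$, $\rho(x,y):=x^2+y^2$, and its restriction to the one‑dimensional manifold $\Gamma\setminus\Sing\Gamma$; recall $\Sing\Gamma$ is finite, so after shrinking we may assume it meets a neighbourhood of the origin only in $\{0\}$. Let $\Sigma$ be the (semialgebraic) set of critical points of $\rho|_{\Gamma\setminus\Sing\Gamma}$. The claim is that $0\notin\overline{\Sigma\setminus\{0\}}$. If not, the curve selection lemma yields a real‑analytic arc $p\colon[0,\delta)\to\bR^2$ with $p(0)=0$ and $p(t)\in\Sigma\setminus\{0\}$ for $t>0$. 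Set $g(t):=\rho(p(t))$, so $g(0)=0$ while $g(t)>0$ for $t>0$. Since $p(t)$ is a critical point of $\rho|_{\Gamma}$, the gradient $\nabla\rho(p(t))=2\,p(t)$ is orthogonal to $T_{p(t)}\Gamma$, which contains $p'(t)$; hence $g'(t)=2\langle p(t),p'(t)\rangle\equiv 0$, forcing $g\equiv 0$, a contradiction. Thus $\rho|_{\Gamma\setminus\{0\}}$ has no critical point in a punctured disk $0<\rho\le\varepsilon_0^2$, i.e.\ every concentric circle of radius $\le\varepsilon_0$ is transverse to $\Gamma$, so $\overline{D}_{\varepsilon_0}$ is a Milnor disk.

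\emph{The irreducible case.} Assume now that $\Gamma$ is analytically irreducible at $0$ and genuinely one‑dimensional there. Then its complexification is an irreducible branch: otherwise it would be a pair of complex‑conjugate branches, which meet near $0$ only at the origin, so the real locus would be the single point $\{0\}$, of dimension zero. Consequently $\Gamma$ admits near $0$ a Newton--Puiseux parametrisation with real coefficients $\gamma(t)=(t^{n},\psi(t))$, $\psi\in\bR\{t\}$, $\psi(0)=0$, realising the normalisation. This $\gamma$ is injective for $|t|$ small: a coincidence $\gamma(t)=\gamma(t')$ with $t\neq t'$ forces $n$ even and $t'=-t$, hence $\psi(t)=\psi(-t)$, i.e.\ $\psi$ even, so $\gamma$ would factor through $t\mapsto t^{2}$, contradicting minimality of the multiplicity $n$. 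Therefore $\gamma$ is a homeomorphism of an interval $(-\delta,\delta)$ onto a neighbourhood of $0$ in $\Gamma$. Reading the first part along $\gamma$: since $\rho$ has no critical point on $\Gamma\setminus\{0\}$ inside the Milnor disk, $\rho\circ\gamma$ has no critical point for $0<|t|<\delta$, hence is strictly increasing on $(0,\delta)$ and strictly decreasing on $(-\delta,0)$; so for $\varepsilon_0$ small the set $I:=\{t:\rho(\gamma(t))\le\varepsilon_0^2\}$ is a closed interval with $0$ in its interior, and $\Gamma\cap\overline{D}_{\varepsilon_0}=\gamma(I)$ is the homeomorphic image of a segment, hence a segment.

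The step I expect to be the real obstacle is the input from real Newton--Puiseux theory in the second part: that analytic irreducibility over $\bR$ together with one‑dimensionality of the real locus forces a single \emph{real} branch carrying a \emph{real}, and moreover reduced, parametrisation --- which is exactly what makes $\gamma$ injective near $0$. Granting that, the rest is routine: the monotonicity of $\rho\circ\gamma$ and the identification $\Gamma\cap\overline{D}_{\varepsilon_0}=\gamma(I)$, just as the curve‑selection argument of the first part is routine. One could alternatively obtain the \enquote{segment} conclusion without the parametrisation by applying Ehresmann's theorem to the proper submersion $\rho\colon\Gamma\cap\overline{D}_{\varepsilon_0}\setminus\{0\}\to(0,\varepsilon_0^2]$, deducing that $\Gamma\cap\overline{D}_{\varepsilon_0}$ is a union of finitely many arcs attached at $0$, and then invoking the classical fact that an irreducible real‑analytic plane branch has exactly two half‑branches; but the Puiseux parametrisation makes the homeomorphism with a segment explicit, which seems preferable.
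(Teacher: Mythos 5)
The paper does not actually give a proof of this lemma: it is stated as a citation to Milnor \cite[Lemma 3.3, page 28]{Mi2} and Ghys \cite[page 105]{Gh1}. Your argument is a self-contained reconstruction along the same lines as those references: the first half is exactly Milnor's curve-selection argument applied to the squared-distance function restricted to $\Gamma$, and the second half is the Puiseux-parametrisation argument that underlies Ghys's discussion of half-branches. So there is no genuine divergence of method, only a welcome filling-in of the source.

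Two small points you should tighten in the second half. First, the injectivity step as written jumps too fast: from a single coincidence $\gamma(t)=\gamma(-t)$ you cannot conclude that $\psi$ is even. You need either to observe that if such coincidences accumulate at $0$ then the analytic function $\psi(t)-\psi(-t)$ vanishes on a set with an accumulation point and is therefore identically zero (so $\psi$ even, whence the claimed contradiction with minimality of $n$), or, more directly, to invoke the standard fact that the complex Puiseux parametrisation of an irreducible branch is injective on a small disc and restrict it to $\bR$. Second, the identity $\Gamma\cap\overline D_{\varepsilon_0}=\gamma(I)$ implicitly uses that the real parametrisation $\gamma$ is \emph{onto} the real trace of $\Gamma$ near $0$, i.e.\ that no real point of $\Gamma$ near $0$ arises from a non-real parameter. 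This is again standard for a real branch with a real-coefficient normalisation (real points correspond to parameters fixed by conjugation up to the $\mu_n$-action, which, after your non-evenness observation, forces the parameter to be real), but since you are explicitly flagging the ``real Puiseux'' input as the delicate step, a sentence making this surjectivity explicit would close the argument cleanly. With those two clarifications, the proof is correct and matches the cited sources in spirit.
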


A proof can be found in \cite[Lemma 3.3, page 28]{Mi2} and \cite[page 105]{Gh1}. For recent progress in the complex setting on the intersection between plane curves and Milnor disks, see the very recent work of Bodin and Borodzik (\cite{Bod1}).

\begin{definition}\cite[page 2]{Gh1}
A \defi{real branch}\index{real branch} is the germ at the origin of an analytically irreducible curve at this point.
\end{definition}

A definition similar to Definition \ref{def:halfBrGeneral} can be found, for instance, in \cite[page 97]{Cas}.

\begin{definition}\label{def:halfBrGeneral}
A \defi{half-branch}\index{real branch!half-branch} (see Figure \ref{fig:halfBr} below) is the germ at the origin of the closure of one of the connected components of the complementary of the origin in a Milnor representative of a branch.

\end{definition}

\begin{remark}
We can choose to denote one half-branch by $\gamma_i^+$ and the other one by $\gamma_i^-$.
\end{remark}

Definition \ref{def:halfBr} is inspired by \cite[Section 2]{CP}.

\begin{definition}\label{def:halfBr}
We call \defi{polar half-branches}\index{polar half-branch} the half-branches of the polar curve $\Gamma:=\Gamma(f,x)$, in the sense of Definition \ref{def:halfBrGeneral}. Let $\gamma^*\subset\Gamma$ be a polar half-branch and let $V$ be a good neighbourhood of the origin, as in Definition \ref{DefGoodNhb}. We say that $\gamma^*$ in $V$ is a \defi{right polar half-branch}\index{real branch!half-branch!right} if $x_{|\gamma^*}\geq 0.$ If $x_{|\gamma^*}\leq 0,$ we say that $\gamma^*$ in $V$ is a \defi{left polar half-branch}.\index{real branch!half-branch!left}
\end{definition}

\begin{remark}
By Corollary \ref{cor:xStrictIncreasing}, in a sufficiently small $V$ we always have that $x$ is strictly increasing when restricted to the right polar half-branches and strictly decreasing when restricted to the left polar half-branches.
\end{remark}

\begin{remark}
We sometimes call the union of the right polar half-branches the \textbf{positive} polar curve.
\end{remark}

\begin{figure}[H]
\centering
\includegraphics[scale=0.11]{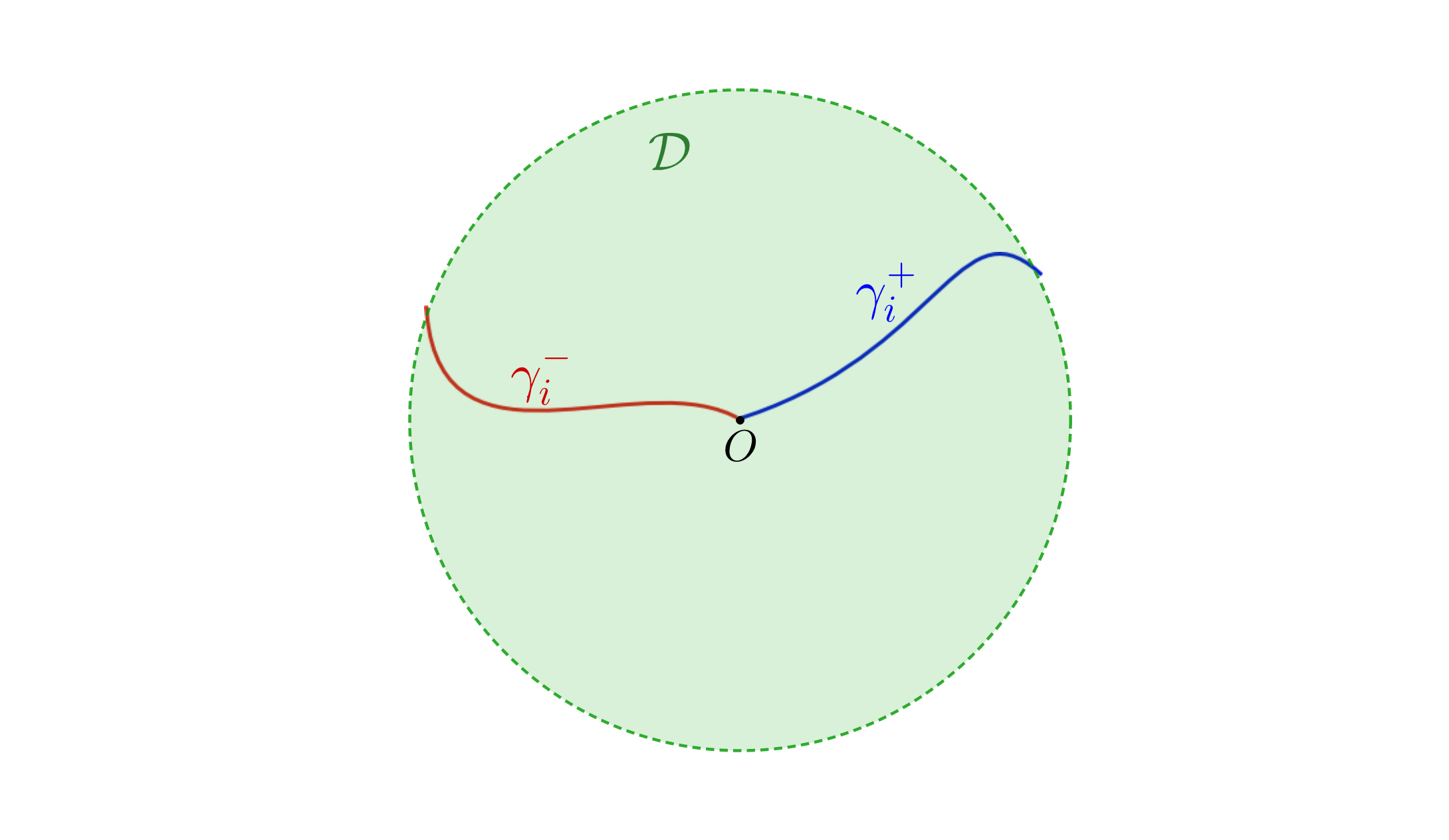} 
\caption{A branch $\gamma_i$ of $\Gamma$ has exactly two half-branches: $\gamma_i^{-}$ and $\gamma_i^{+}$.\label{fig:halfBr}}
\end{figure}

A result similar to the following lemma can be found in \cite[Lemma 1.1]{FP}.
\begin{lemma}\label{LemmaStrictIncreasing}
Let $g:\bR^2\rightarrow \bR$ be a polynomial function with a strict local minimum at the origin $O=(0,0)$ and let $\Gamma$ be an algebraic curve passing through the origin. In a small enough neighbourhood $V$ of the origin, for any  half-branch $\gamma^*\subset\Gamma$, the restriction $g_{|\gamma^*}$ is strictly monotone. In particular, in the sufficiently small neighbourhood $V$, for a small enough $0<\varepsilon\ll 1,$ the intersection $(g=\varepsilon)\cap \gamma^*$ consists of exactly one point, for each half-branch $\gamma^*$ (see Figure \ref{fig:1Pt}). Here the symbol $*\in\{+,-\}$.
\end{lemma}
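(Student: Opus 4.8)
The plan is to reduce the statement to a one-variable question via a parametrisation of the half-branch, and then to analyze the resulting univariate function using the strict local minimum hypothesis. First I would invoke the Newton--Puiseux theorem (or simply the fact that an analytically irreducible plane curve germ admits an injective analytic parametrisation): for a half-branch $\gamma^*$ of $\Gamma$ at the origin, choose a parametrisation $s\mapsto \gamma^*(s)=(x(s),y(s))$ with $s\in[0,\delta)$, $\gamma^*(0)=O$, and $\gamma^*$ injective. Then $g_{|\gamma^*}$ becomes the univariate analytic function $\phi(s):=g(x(s),y(s))$ defined on $[0,\delta)$, with $\phi(0)=0$ (after translating so that $g(O)=0$, which costs nothing since monotonicity is translation-invariant in the target).

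The key step is to show $\phi$ is strictly monotone on a possibly smaller interval $[0,\delta')$. Since $O$ is a strict local minimum of $g$, for all sufficiently small punctured neighbourhoods we have $g>0$ away from $O$; hence for $s\in(0,\delta')$ with $\delta'$ small enough we get $\phi(s)>0=\phi(0)$. Now $\phi$ is analytic and not identically zero on $(0,\delta')$, so it admits a Puiseux/Taylor expansion $\phi(s)=c\,s^{k}+\text{higher order}$ with $c>0$ and $k\geq 1$ (positivity of the leading coefficient forced by $\phi(s)>0$ near $0$, and $k$ finite by analyticity and non-vanishing). Its derivative $\phi'(s)=ck\,s^{k-1}+\cdots$ therefore has a leading term of the same sign as $c$, so $\phi'$ does not vanish on a punctured interval $(0,\delta'')$; being continuous and of constant sign there, $\phi$ is strictly monotone (in fact strictly increasing) on $[0,\delta'')$. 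Restricting the good neighbourhood $V$ so that every half-branch meeting $V$ lies inside the relevant small disk gives strict monotonicity of $g_{|\gamma^*}$ for all half-branches simultaneously (there are finitely many branches of $\Gamma$ through $O$, hence finitely many half-branches, so we may take the minimum of the finitely many radii).

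For the second assertion, once $g_{|\gamma^*}$ is strictly monotone and continuous with $g_{|\gamma^*}(O)=0$ and $g_{|\gamma^*}\to$ some positive value (or simply $g_{|\gamma^*}$ takes values in an interval $[0,\mu)$ with $\mu>0$) along $\gamma^*$ in $V$, the intermediate value theorem plus injectivity of a strictly monotone function yield that for each $0<\varepsilon<\mu$ the equation $g_{|\gamma^*}(s)=\varepsilon$ has exactly one solution; since the parametrisation is injective, this corresponds to exactly one point of $(g=\varepsilon)\cap\gamma^*$ in $V$. Taking $\varepsilon$ smaller than the minimum of the finitely many thresholds $\mu$ handles all half-branches at once.

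The main obstacle I anticipate is the rigorous justification that $\phi=g_{|\gamma^*}$ is not identically zero and has a well-defined finite-order leading term of positive sign — i.e. correctly combining the analytic parametrisation with the strict-minimum hypothesis, and being careful that "strict local minimum" gives $g>0$ on a punctured neighbourhood so that the half-branch cannot be contained in $(g=0)$. A secondary technical point is uniformity: one must shrink $V$ once, using finiteness of the set of half-branches of $\Gamma$, rather than shrinking branch-by-branch; this is routine but should be stated. Everything else (Puiseux expansion, sign of leading coefficient, non-vanishing of the derivative near $0$, intermediate value theorem) is standard and can be invoked without detailed computation.
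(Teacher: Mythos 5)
Your proof is correct, but it follows a different route from the paper's. The paper does not parametrise the branch at all: it observes that $\gamma^+$ is (away from finitely many points) a one-dimensional algebraic manifold, invokes Milnor's Lemma 2.7 to conclude that the set of local extrema of $g_{|\gamma^+}$ is a zero-dimensional algebraic set, hence finite, and then shrinks $V$ to exclude all of these extrema except possibly the origin; strict monotonicity follows. You instead pull everything back through a Newton--Puiseux parametrisation $s\mapsto(x(s),y(s))$ of the half-branch and study the leading term of the analytic function $\phi(s)=g(x(s),y(s))$, using the strict-minimum hypothesis to force $\phi>0$ on a punctured interval and hence $\phi'\neq 0$ there. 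Both arguments are sound. Your version is more hands-on and has the bonus of identifying the \emph{direction} of monotonicity (strictly increasing away from the origin, which the paper only extracts later in Corollary \ref{RemarkStrictlyIncr}). The paper's version has the advantage that it visibly uses only the hypothesis that $g_{|\gamma^*}$ is non-constant, which matters because the lemma is subsequently applied with $g(x,y)=x$ and $g(x,y)=x^2+y^2$ (Corollaries \ref{cor:xStrictIncreasing} and \ref{cor:distantaMonotone}), and $g(x,y)=x$ has no strict local minimum at the origin. Your argument adapts with one line --- non-constancy alone already gives a nonzero leading coefficient $c$ for $\phi-\phi(0)$, and $\phi'$ then has constant sign near $0^+$ regardless of the sign of $c$ --- but as written it leans on the strict-minimum hypothesis more than the intended later uses allow, so you should flag that the positivity of $c$ is only needed for the \enquote{strictly increasing} refinement, not for strict monotonicity itself. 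Your treatment of uniformity over the finitely many half-branches and of the second assertion via the intermediate value theorem matches what the paper leaves implicit.
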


\begin{figure}[H]
\centering
\includegraphics[scale=0.1,trim={0 6cm 0 4cm}, clip]{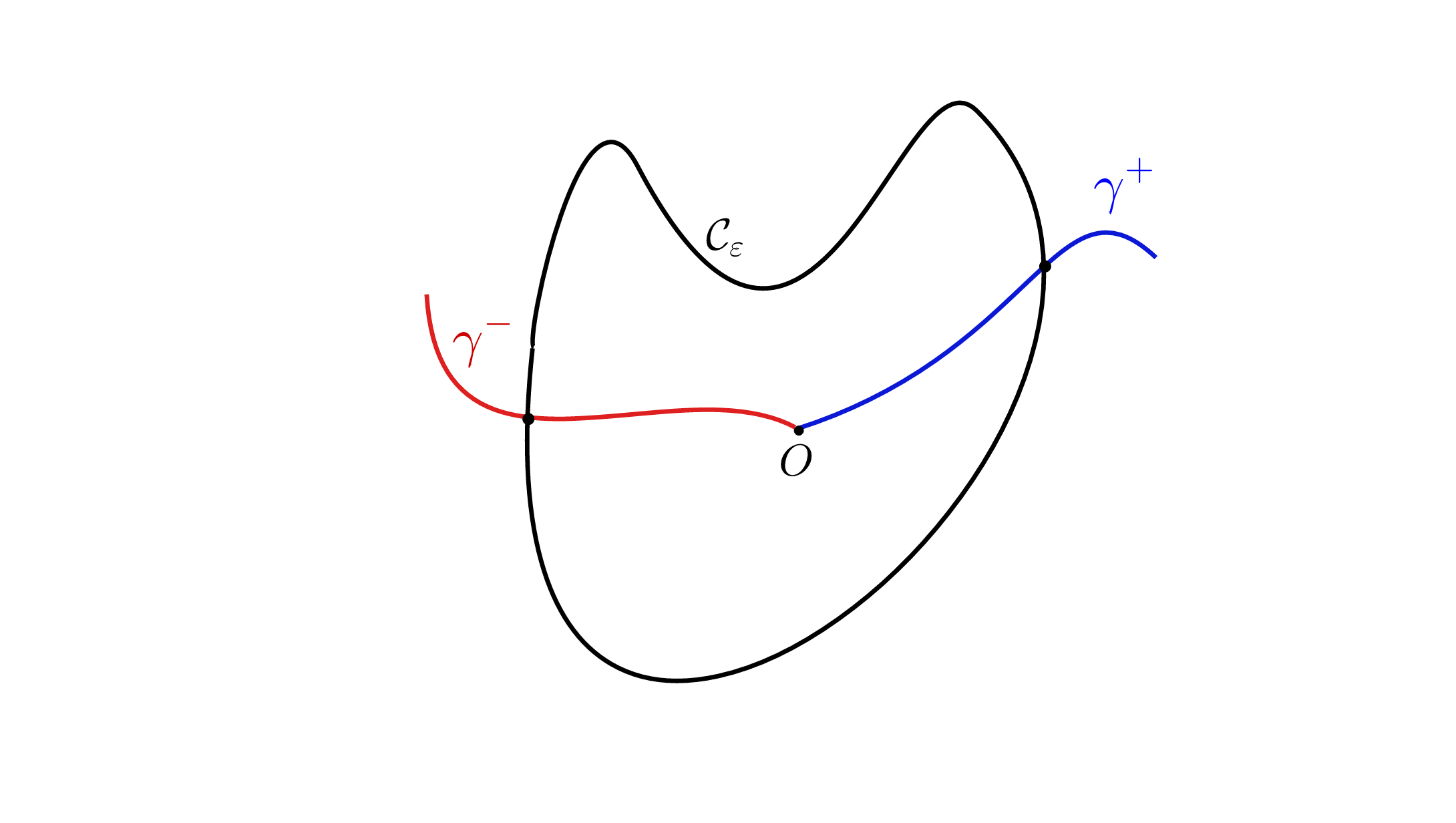} 
\caption{For a small enough neighbourhood $V$ and for a sufficiently small $0<\varepsilon\ll 1$, the intersection $\mathcal{C}_\varepsilon\cap \gamma^-$ (respectively $\mathcal{C}_\varepsilon\cap \gamma^+$) consists of exactly one point.\label{fig:1Pt}}
\end{figure}

\begin{proof}
Let us choose, without loss of generality, to study the behaviour of $g$ on $\gamma^+.$ Suppose that $g_{|\gamma^+}$ is not constant. Since $\Gamma$ is an algebraic curve, it has only a finite number of critical points. Therefore one can assume that in a small enough neighbourhood $V$ of the origin $O=(0,0)$, the half-branch $\gamma^+$ has no critical points except maybe the origin $O=(0,0)$. By \cite[Lemma 2.7]{Mi2}, applied to the polynomial function $g$ and to the one-dimensional manifold $\gamma^+$, which is a subset of the algebraic curve $\Gamma$, in the sufficiently small neighbourhood of $O=(0,0)$, the set of the local extrema of 
$g_{|\gamma^+}$ is an algebraic set of dimension zero. Thus, by \cite[Corollary 9.1, page 227]{Ei}, the set of local extrema is finite. Now let us choose $V$ sufficiently small such that 
in $V$ there are no local extrema of the function $g_{|\gamma^+\setminus \{(0,0)\}}.$ Thus, in  $V$, $g_{|\gamma^+}$ is  strictly monotone.
\end{proof}

\begin{corollary}\label{RemarkStrictlyIncr}
Given a polynomial function $f$ with a strict local minimum at the origin and its polar curve $\Gamma(f,x)$, if $\gamma^+$ is a right polar half-branch, then $f_{\gamma^+}$ is always strictly increasing in $V$, when going further from the origin.\index{polar curve}
\end{corollary}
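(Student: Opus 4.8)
The statement is a direct consequence of Lemma \ref{LemmaStrictIncreasing} together with the sign condition forced by the strict local minimum at the origin, so the plan is short. First I would apply Lemma \ref{LemmaStrictIncreasing} with $g:=f$ and $\Gamma:=\Gamma(f,x)$. A right polar half-branch $\gamma^+$ is, by Definitions \ref{def:halfBrGeneral} and \ref{def:halfBr}, a half-branch of the algebraic curve $\Gamma(f,x)$, and this curve passes through the origin since $O$ is a critical point of $f$, so that $\frac{\partial f}{\partial y}(0,0)=0$. Hence the hypotheses of Lemma \ref{LemmaStrictIncreasing} are met, and after shrinking $V$ the restriction $f_{|\gamma^+}$ is strictly monotone on $\gamma^+\cap V$; in particular it is non-constant. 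It remains only to decide the direction of this monotonicity.

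Next I would parametrise the half-branch. By Lemma \ref{lemma:halfBr} (or directly from Definition \ref{def:halfBrGeneral}), in a small enough Milnor representative contained in $V$ the half-branch $\gamma^+$ is homeomorphic to a half-open segment; choose a continuous parametrisation $t\mapsto\gamma^+(t)$, $t\in[0,\delta)$, with $\gamma^+(0)=O$ and $t$ increasing as one moves further from the origin along $\gamma^+$. Since $f(0,0)=0$ and $O$ is a strict local minimum of $f$ (the hypotheses of Section \ref{SectHypothesis}), after shrinking $V$ once more we have $f(\gamma^+(t))>0=f(\gamma^+(0))$ for every $t\in(0,\delta)$.

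Finally I would conclude: a strictly monotone function on $[0,\delta)$ that takes, at some $t>0$, a value strictly larger than its value at $0$ cannot be decreasing, so $t\mapsto f(\gamma^+(t))$ is strictly increasing. This is exactly the assertion that $f_{\gamma^+}$ is strictly increasing in $V$ when going further from the origin. I expect no real obstacle here: all the substance is in Lemma \ref{LemmaStrictIncreasing}. The only points needing a little care are that $V$ must be shrunk so that the conclusion of Lemma \ref{LemmaStrictIncreasing}, the strict-local-minimum property of Section \ref{SectHypothesis}, and the "single arc emanating from $O$" structure of the half-branch all hold simultaneously, and the observation that the argument is insensitive to whether $\gamma^+$ is a right or a left polar half-branch, the restriction in the statement being only a matter of the intended later use.
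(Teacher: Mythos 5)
Your proposal is correct and follows essentially the same route as the paper: the paper's proof is exactly ``apply Lemma \ref{LemmaStrictIncreasing} to $f$ and $\Gamma(f,x)$, taking into account that $(0,0)$ is a strict local minimum,'' and you have simply made explicit the two implicit steps (that $\Gamma(f,x)$ passes through $O$ because $\frac{\partial f}{\partial y}(0,0)=0$, and that strict monotonicity plus $f>0=f(O)$ on the punctured half-branch forces the increasing direction). Your closing observation that the argument is insensitive to right versus left half-branches is also accurate.
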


\begin{proof}
Apply Lemma \ref{LemmaStrictIncreasing} for the function $f$ and the polar curve $\Gamma(f,x),$ taking into account that $(0,0)$ is a strict local minimum.
\end{proof}

\begin{example}
Let us present in Figure \ref{fig:fPasCroiss} a configuration which is \textbf{impossible} asymptotically (i.e. for $0<\varepsilon\ll 1$ sufficiently small) in $V$: even if $x_{|\gamma^+}$ is strictly increasing, we have $f_{|\gamma^+}$ not strictly increasing. 

\vspace{-\baselineskip}
\begin{figure}[H]
\centering
\includegraphics[scale=0.12,trim={0 4cm 0 6cm}, clip]{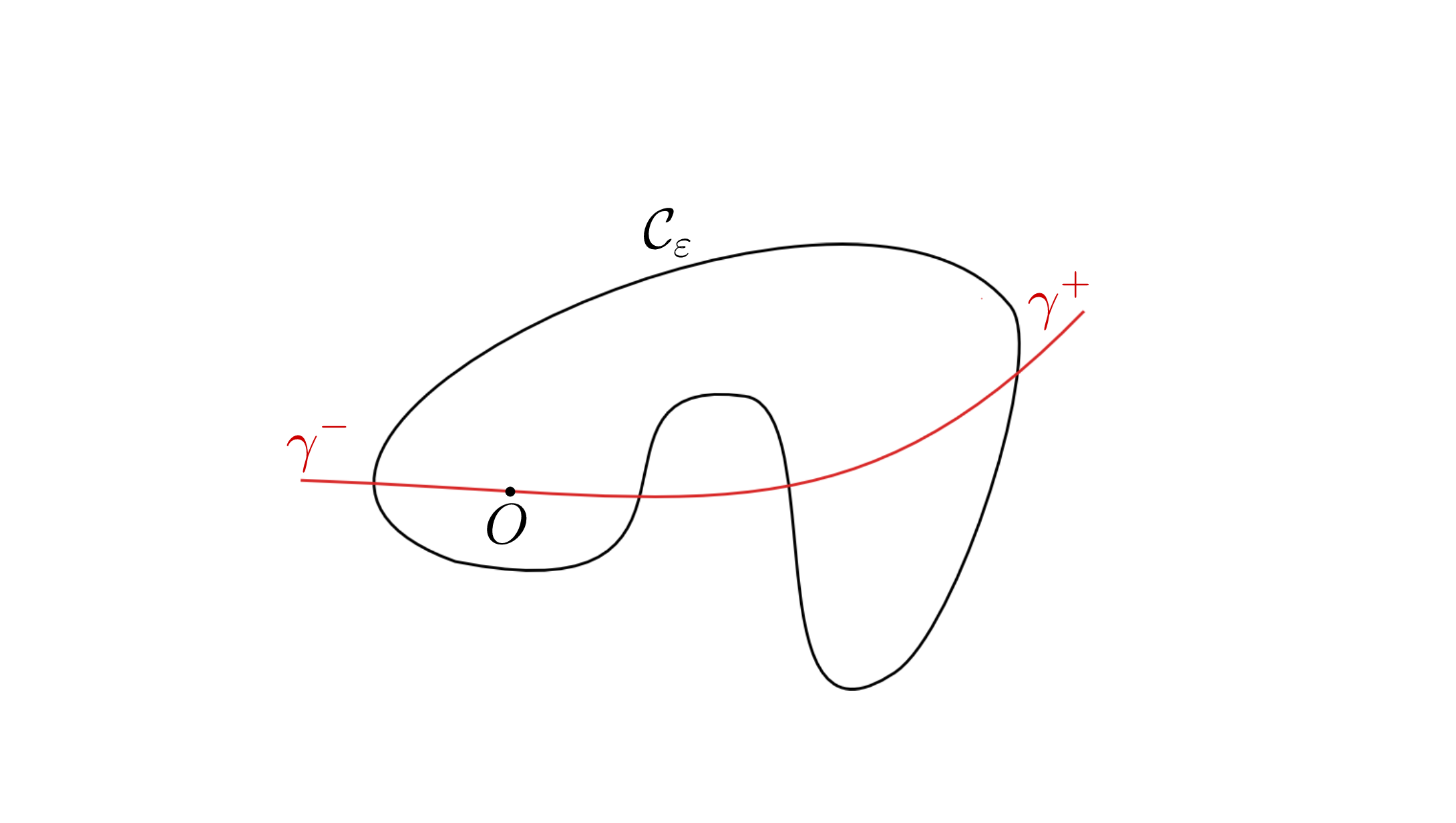} 
\caption{Locally impossible configuration in $V$: $f_{|\gamma^+}$ not strictly increasing.\label{fig:fPasCroiss}}
\end{figure}
\end{example}

\begin{corollary}
In particular, the shape of $\mathcal{C}_\varepsilon$ in a sufficiently small enough $V$ cannot go backwards or be spiral shaped.
\end{corollary}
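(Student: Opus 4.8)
The plan is to deduce this corollary as a direct consequence of Corollary \ref{RemarkStrictlyIncr} (equivalently Corollary \ref{RemarkStrictlyIncr}/Lemma \ref{LemmaStrictIncreasing}) together with the Poincaré-Reeb description of $\mathcal{C}_\varepsilon$. First I would make precise what ``going backwards'' and ``spiral shaped'' mean in this context: a spiralling phenomenon would force the restriction $x_{|\mathcal{C}_\varepsilon}$, read along the Jordan curve, to have more local extrema than the number of polar half-branches meeting $\mathcal{C}_\varepsilon$, or equivalently it would force some arc of $\mathcal{C}_\varepsilon$ between two consecutive vertical-tangency points to fail to be transverse to the vertical foliation. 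So the first step is to recall from Lemma \ref{lemma:origInInterior} that for $0<\varepsilon\ll 1$ the curve $\mathcal{C}_\varepsilon$ is a smooth Jordan curve with the origin in its interior, and from the remark following Definition \ref{DefPolCurve} that its points of vertical tangency are exactly the points $\mathcal{C}_\varepsilon\cap\Gamma(f,x)$.

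The key step is then to invoke Lemma \ref{LemmaStrictIncreasing} (applied to $g=f$): in a sufficiently small $V$ each polar half-branch $\gamma^*$ meets $\mathcal{C}_\varepsilon$ in exactly one point, so the number of vertical tangency points of $\mathcal{C}_\varepsilon$ is exactly the (finite, $\varepsilon$-independent) number of polar half-branches. Between two cyclically consecutive such points, $\mathcal{C}_\varepsilon$ carries no vertical tangency, hence the projection $\Pi(x,y)=x$ restricted to that open arc is a local diffeomorphism onto its image; being a connected $1$-manifold arc with nowhere-vanishing derivative of $x$, the function $x$ is strictly monotone along it. Thus $\mathcal{C}_\varepsilon$ decomposes into finitely many arcs each transverse to the vertical foliation, which is precisely the statement that there is no turning back. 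To upgrade ``no backtracking'' to ``no spiralling'', I would observe that a spiral would require infinitely many, or at least an $\varepsilon$-growing number of, crossings of some vertical line $x=x_0$ by $\mathcal{C}_\varepsilon$; but each such crossing that is not a tangency contributes a sign change of $\partial_x$ along $\mathcal{C}_\varepsilon$ and hence sits between two tangency points, so the number of crossings of any vertical line is bounded by the fixed number of polar half-branches — no accumulation, no spiral.

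The main obstacle I anticipate is purely expository rather than mathematical: pinning down a usable definition of ``spiral shaped'' that the argument actually refutes, since the intuitive picture in Figure \ref{fig:spiralNo} is not a formal statement. The cleanest route is to phrase the conclusion as: for $0<\varepsilon\ll1$ in a small enough $V$, the curve $\mathcal{C}_\varepsilon$ is a union of finitely many closed arcs, each transverse to the foliation by vertical lines, with endpoints on the polar curve; equivalently, $x_{|\mathcal{C}_\varepsilon}$ has exactly two more monotonicity intervals than there are right polar half-branches (counting the symmetric left ones), and in particular $\mathcal{C}_\varepsilon$ meets every vertical line in a bounded number of points. This is exactly the transversality already recorded in Corollary \ref{cor:PReebInGeneral} for the Poincaré-Reeb tree, now transported back to the curve via the contraction map $q$; so the proof is really just the sentence ``apply Corollary \ref{RemarkStrictlyIncr} and Lemma \ref{LemmaStrictIncreasing}, then read off transversality of $\mathcal{C}_\varepsilon$ arc by arc,'' with the bulk of the write-up devoted to stating the dichotomy carefully.
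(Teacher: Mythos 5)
Your proposal is correct and follows exactly the route the paper intends: the corollary is stated there without proof, as an immediate consequence of Lemma \ref{LemmaStrictIncreasing} and Corollary \ref{RemarkStrictlyIncr}, namely that $f$ is strictly monotone on each polar half-branch, so each half-branch meets $\mathcal{C}_\varepsilon$ exactly once, the vertical tangencies of $\mathcal{C}_\varepsilon$ are finite in number and bounded independently of $\varepsilon$, and $x$ is strictly monotone on each arc between consecutive tangencies. Your write-up in fact supplies more detail than the paper does (in particular the formalisation of \enquote{spiralling} as an unbounded number of crossings of a vertical line), and the only implicit point worth flagging is the standard fact that for $V$ small enough $\Gamma(f,x)\cap V$ consists only of the half-branches through the origin, which the paper also assumes tacitly.
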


\begin{corollary}\label{Cor:NoSpiral}
We can choose a sufficiently small neighbourhood $V$ such that the intersection $\gamma^*\cap \mathcal{C}_\varepsilon$ consists of exactly one point, where $\mathcal{C}_\varepsilon:=(f=\varepsilon)\cap V$.
\end{corollary}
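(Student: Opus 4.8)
The plan is to obtain this as an essentially immediate consequence of Lemma~\ref{LemmaStrictIncreasing}, Corollary~\ref{RemarkStrictlyIncr} and Lemma~\ref{lemma:origInInterior}, once the neighbourhoods produced by those results are made uniform over the finitely many polar half-branches. First I would recall that, since $f$ has a strict local minimum at the origin with $f(0,0)=0$, Corollary~\ref{RemarkStrictlyIncr} --- that is, Lemma~\ref{LemmaStrictIncreasing} applied to $g=f$ and $\Gamma=\Gamma(f,x)$ --- gives, for \emph{each} polar half-branch $\gamma^{*}$ (in the sense of Definition~\ref{def:halfBr}), a neighbourhood $V_{\gamma^{*}}$ of the origin on which $f_{|\gamma^{*}}$ is strictly monotone, in fact strictly increasing as one moves away from $O$, starting from the value $0$ at $O$.

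Next I would pass from one neighbourhood per half-branch to a single neighbourhood serving all of them. The polar curve $\Gamma(f,x)$ is algebraic, hence the germ of $\Gamma(f,x)$ at $O$ decomposes into finitely many analytically irreducible branches $\gamma_{1},\dots,\gamma_{k}$, so that there are only finitely many polar half-branches $\gamma_{1}^{\pm},\dots,\gamma_{k}^{\pm}$. I would then fix $V$ contained in the intersection of all the $V_{\gamma_{i}^{\pm}}$ and small enough that Lemma~\ref{lemma:origInInterior} applies; shrinking further if needed, I may assume that $V$ is a Milnor disk for $\Gamma(f,x)$ as in Lemma~\ref{lemma:halfBr}, so that each $\gamma_{i}^{\pm}$ is represented inside $V$ by $\gamma_{i}^{\pm}\cap V$, homeomorphic to a half-open segment issuing from $O$. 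On $\gamma_{i}^{\pm}\cap V$ the continuous function $f$ increases strictly from $0$, so its image is an interval of the form $[0,m_{i}^{\pm})$ or $[0,m_{i}^{\pm}]$ with $m_{i}^{\pm}>0$; I set $\varepsilon_{0}:=\min_{i,\pm} m_{i}^{\pm}>0$.

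Finally, fix $0<\varepsilon<\varepsilon_{0}$. For every polar half-branch $\gamma^{*}$ the value $\varepsilon$ lies in the image of the strictly increasing function $f_{|\gamma^{*}\cap V}$ and is attained exactly once, so $(f=\varepsilon)\cap\gamma^{*}\cap V$ is a single point. By Lemma~\ref{lemma:origInInterior}, for $\varepsilon$ this small the set $(f=\varepsilon)\cap V$ is a single smooth Jordan curve enclosing the origin, which is exactly $\mathcal{C}_{\varepsilon}$ in the notation of the statement (and agrees with Definition~\ref{def:levelCEpsilon}). Since the chosen representative of $\gamma^{*}$ lies in $V$, we get $\gamma^{*}\cap\mathcal{C}_{\varepsilon}=(f=\varepsilon)\cap\gamma^{*}\cap V$, a single point, as claimed.

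The only genuine point requiring attention --- and hence the main obstacle, mild as it is --- is that Lemma~\ref{LemmaStrictIncreasing} on its own yields only \emph{uniqueness} of the intersection of a level curve with a fixed half-branch; one must additionally secure \emph{existence}, namely that $\varepsilon$ is small enough for the monotone function $f_{|\gamma^{*}}$ to actually reach the level $\varepsilon$ within the fixed representative $\gamma^{*}\cap V$, and that this can be arranged simultaneously for the finitely many half-branches. This is precisely what the uniform choice of $\varepsilon_{0}$ above takes care of; everything else is bookkeeping of nested neighbourhoods.
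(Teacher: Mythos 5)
Your proof is correct and rests on the same key ingredient as the paper's: Corollary~\ref{RemarkStrictlyIncr}, i.e.\ the strict monotonicity of $f$ on each polar half-branch coming from Lemma~\ref{LemmaStrictIncreasing}. In fact you do slightly more than the paper, whose entire argument is the one-line observation that strict monotonicity forces the intersection $(f=\varepsilon)\cap\gamma^{*}$ to consist of \emph{at most} one point; the statement asserts \emph{exactly} one point, and your uniform choice of $\varepsilon_{0}$ as the minimum over the finitely many half-branches of the suprema of $f$ on their representatives in $V$ is precisely the existence argument the paper leaves implicit. So your write-up is the same route, carried out more completely.
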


\begin{proof}
By Corollary \ref{RemarkStrictlyIncr}, for any polar half-branch $\gamma^*$ one has $f_{\gamma^*}$ is locally strictly increasing, thus for sufficiently small $\varepsilon,$ the intersection $(f=\varepsilon)\cap\gamma^*$ consists of at most one point. 
\end{proof}

A consequence of Lemma \ref{LemmaStrictIncreasing}, applied to the polar curve $\Gamma(f,x)$ of $f$ and for the function $g:\mathbb{R}^2\rightarrow\mathbb{R}$, $g(x,y):=x$ is the following result.

\begin{corollary}\label{cor:xStrictIncreasing}
Let $f$ be a polynomial function with a strict local minimum at the origin and let $\Gamma(f,x):=\left \{(x,y)\in\mathbb{R}^2\;\middle|\; \frac{\partial f}{\partial y}(x,y)=0\right \}$ be its polar curve with respect to $x$. In a small enough neighbourhood $V$ of the origin, for any polar half-branch $\gamma^*\subset\Gamma$, the restriction $x_{|\gamma^*}$ is strictly monotone. Here the symbol $*\in\{+,-\}$. 
\end{corollary}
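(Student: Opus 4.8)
The statement is presented as a direct application of Lemma \ref{LemmaStrictIncreasing} with $g(x,y):=x$, but there is one point that requires attention: the function $g(x,y)=x$ does \emph{not} have a strict local minimum at the origin, so Lemma \ref{LemmaStrictIncreasing} cannot be quoted verbatim. Inspecting its proof, however, the strict-minimum hypothesis was used there only to guarantee that the restriction of $g$ to a half-branch is not constant; once non-constancy is known, the rest of that argument applies to \emph{any} polynomial function. So the plan is: (1) prove directly that $x_{|\gamma^*}$ is non-constant for every polar half-branch $\gamma^*$, using Proposition \ref{prop:projDirNot}; (2) rerun the monotonicity argument of Lemma \ref{LemmaStrictIncreasing} with $g=x$.

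For step (1), fix a polar half-branch $\gamma^*\subset\Gamma(f,x)$ and suppose, for contradiction, that $x_{|\gamma^*}$ is constant. The closure of $\gamma^*$ contains the origin, so by continuity $x_{|\gamma^*}\equiv x(0,0)=0$, i.e.\ $\gamma^*\subset(x=0)$. Since $\gamma^*$ is an infinite subset of the algebraic curve $\Gamma(f,x)$ and of the irreducible algebraic curve $(x=0)$, the intersection $(x=0)\cap\Gamma(f,x)$ is infinite; as $\Gamma(f,x)$ is algebraic, this forces $(x=0)\subset\Gamma(f,x)$, contradicting Proposition \ref{prop:projDirNot} (applicable since $f$ has a strict local minimum at the origin and $f(0,0)=0$). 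Hence $x_{|\gamma^*}$ is non-constant.

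For step (2), shrink $V$: because $\Gamma$ is an algebraic curve it has finitely many critical points, so we may assume $\gamma^*$ has no critical point in $V$ other than possibly the origin; by \cite[Lemma 2.7]{Mi2} the set of local extrema of $x_{|\gamma^*}$ in $V$ is an algebraic set of dimension zero, hence finite by \cite[Corollary 9.1, page 227]{Ei}; shrinking $V$ once more, $x_{|\gamma^*\setminus\{(0,0)\}}$ has no local extremum in $V$, and a non-constant continuous function on an arc with endpoint $(0,0)$ and no interior local extremum is strictly monotone. Since $\Gamma(f,x)$ has only finitely many half-branches at the origin, intersecting the finitely many resulting neighbourhoods gives a single $V$ that works for all of them, which proves the corollary. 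The only real obstacle is the one flagged above — the mismatch of hypotheses — and Proposition \ref{prop:projDirNot} is exactly the tool that removes it.
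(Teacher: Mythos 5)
Your proof is correct and follows essentially the same route as the paper: the paper's own one-line proof likewise invokes Proposition \ref{prop:projDirNot} to establish that $x$ is not constant on the polar curve and then applies Lemma \ref{LemmaStrictIncreasing}. Your observation that $g(x,y)=x$ does not literally satisfy the lemma's strict-local-minimum hypothesis, and that its proof only uses non-constancy of $g_{|\gamma^*}$, is accurate --- you have simply made explicit a step the paper leaves implicit.
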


\begin{proof}
Since $x$ is not constant on $\Gamma(f,x)$ by Proposition \ref{prop:projDirNot}, we can apply Lemma \ref{LemmaStrictIncreasing}.
\end{proof}

\begin{example}
Let us present in Figure \ref{fig:xPasCroiss} a locally \textbf{impossible} configuration in $V$ of $\mathcal{C}_\varepsilon$ with a polar half-branch $\gamma^+$: even if $f_{|\gamma^+}$ is strictly increasing, we have $x_{|\gamma^+}$ not strictly increasing. 

\begin{figure}[H]
\centering
\includegraphics[scale=0.12,trim={0 6cm 0 4cm}, clip]{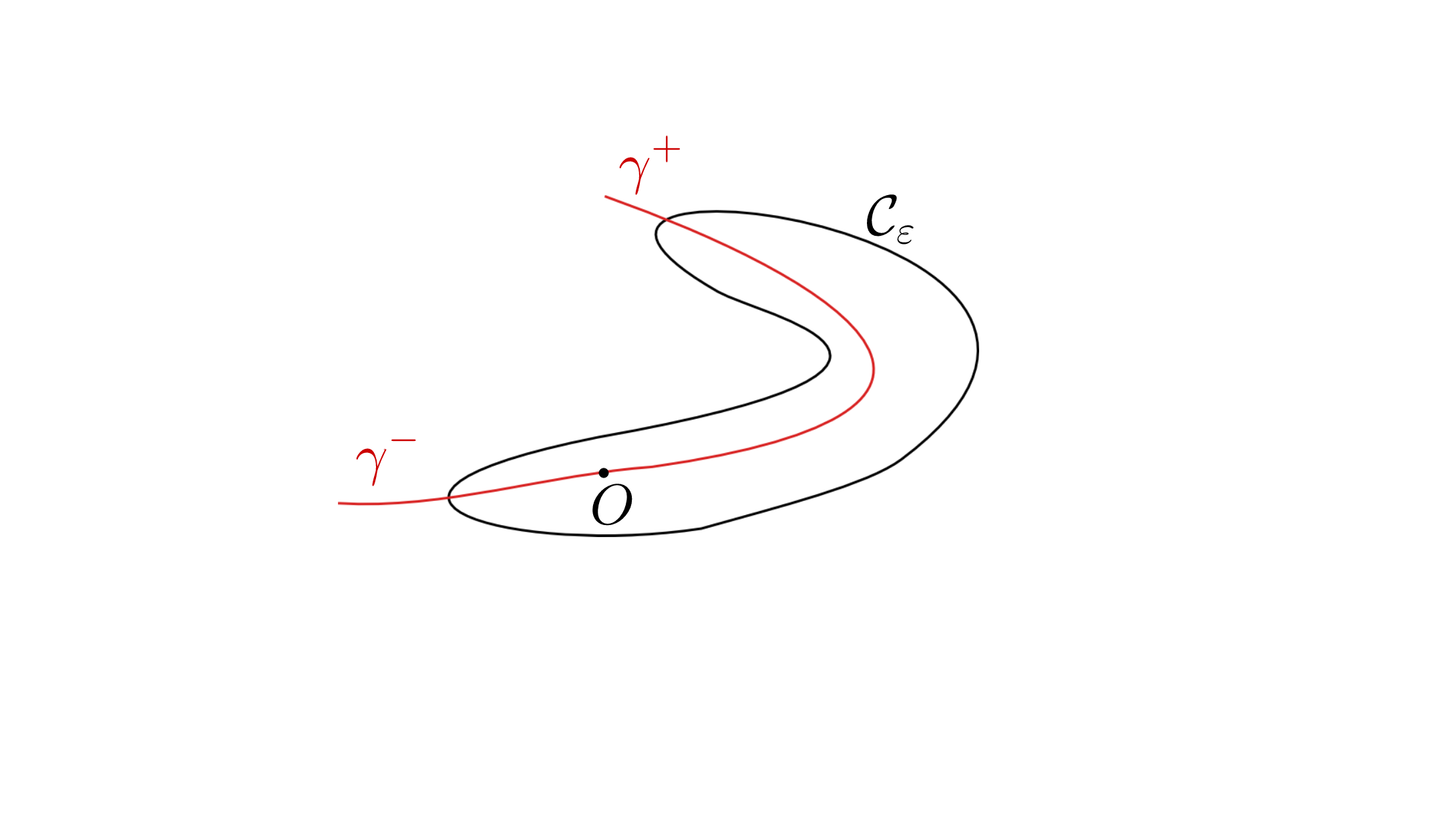} 
\caption{Impossible asymptotic configuration in $V$, since $x_{|\gamma^+}$ is not strictly increasing.\label{fig:xPasCroiss}}
\end{figure}
\end{example}

\begin{remark}\label{remark:noVertTg}
In particular, the shape of $\mathcal{C}_\varepsilon$ in a sufficiently small $V$ cannot \enquote{go backwards}. More rigorously, if we choose $V$ small enough, then the polar curve $\Gamma(f,x)$ has no vertical tangents in $V$. A vertical tangent of a polar half-branch $\gamma^*$ would correspond to a local extremum of the function $x_{|\gamma^*}$, which is impossible.
\end{remark}

A consequence of Lemma \ref{LemmaStrictIncreasing}, applied to the polar curve $\Gamma(f,x)$ of a polynomial function $f$ with a strict local minimum at the origin and to the function square of the distance to the origin, namely $x^2+y^2$ is the following result.

\begin{corollary}\label{cor:distantaMonotone}
Let $f$ be a polynomial function $f$ with a strict local minimum at the origin and let $\Gamma(f,x)$ be its polar curve with respect to $x$. In a small enough neighbourhood $V$ of the origin, for any polar half-branch $\gamma^*\subset\Gamma$, the restriction ${x^2+y^2}_{|\gamma^*}$ is strictly monotone. Here the symbol $*\in\{+,-\}$.
\end{corollary}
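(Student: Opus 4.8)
The plan is to invoke Lemma~\ref{LemmaStrictIncreasing} verbatim, taking for the polynomial function $g\colon\bR^2\to\bR$ the square of the distance to the origin, $g(x,y):=x^2+y^2$, and for the algebraic curve the polar curve $\Gamma:=\Gamma(f,x)$. First I would check that the hypotheses of that lemma are met. The function $x^2+y^2$ is a polynomial that attains a strict global, hence strict local, minimum at the origin, with value $0$. The set $\Gamma(f,x)$ is cut out by the polynomial equation $\partial f/\partial y=0$, so it is an algebraic curve; and since $f$ has a strict local minimum at the origin, the origin is a critical point of $f$, so $\partial f/\partial y(0,0)=0$ and the origin lies on $\Gamma(f,x)$. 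One could also record, as in Proposition~\ref{prop:projDirNot}, that $\partial f/\partial y\not\equiv 0$, so that $\Gamma(f,x)$ is a genuine curve and its half-branches, in the sense of Definition~\ref{def:halfBrGeneral}, are defined; if there are no polar half-branches at all the statement is vacuous.

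Next I would address the one point where the conclusion of Lemma~\ref{LemmaStrictIncreasing} requires an input, namely the non-constancy of the restriction. Let $\gamma^*\subset\Gamma$ be a polar half-branch, with $*\in\{+,-\}$. Its closure contains the origin, where $g=0$, while every other point of a Milnor representative of $\gamma^*$ is at positive distance from the origin, so $g=x^2+y^2$ is non-constant on $\gamma^*$. Hence Lemma~\ref{LemmaStrictIncreasing}, applied to $g$ and $\Gamma$, yields a neighbourhood in which $g_{|\gamma^*}={x^2+y^2}_{|\gamma^*}$ is strictly monotone; since $g$ vanishes at the origin and is positive elsewhere on $\gamma^*$, this monotonicity is in fact strictly increasing as one moves away from the origin along $\gamma^*$, exactly as in Corollary~\ref{RemarkStrictlyIncr}. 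Finally, since $\Gamma$ has only finitely many half-branches, one may shrink $V$ so that the conclusion holds simultaneously for all of them.

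I do not expect any serious obstacle here: the statement is a direct specialisation of Lemma~\ref{LemmaStrictIncreasing}, in the same spirit as Corollary~\ref{cor:xStrictIncreasing} (which specialises it to $g(x,y):=x$ via Proposition~\ref{prop:projDirNot}). The only things demanding a word of care are the two hypothesis checks above — that the origin belongs to $\Gamma(f,x)$ and that $x^2+y^2$ is non-constant on each half-branch — together with the bookkeeping that $V$ is chosen small enough to work for the finitely many half-branches at once.
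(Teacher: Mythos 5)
Your proposal is correct and takes essentially the same approach as the paper: the paper states (in the sentence preceding the corollary) that it is a consequence of Lemma~\ref{LemmaStrictIncreasing} applied to the polar curve $\Gamma(f,x)$ and to the function $g(x,y):=x^2+y^2$, which is exactly what you do. Your extra care in verifying that the origin lies on $\Gamma(f,x)$ and that $x^2+y^2$ is non-constant on each half-branch is a reasonable (if implicit in the paper) hypothesis check, and does not change the substance of the argument.
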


\begin{corollary}\label{cor:onlyOnePoint}
We can choose a sufficiently small neighbourhood $V$ such that for any polar half-branch $\gamma^*$, the intersection $\gamma^*\cap \partial V$ consists of exactly one point.
\end{corollary}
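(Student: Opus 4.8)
The plan is to combine the distance-monotonicity statement of Corollary~\ref{cor:distantaMonotone} with the Milnor disk property of Lemma~\ref{lemma:halfBr}. First I would choose the neighbourhood $V$ to be a Euclidean disk $D_r = \{(x,y)\in\mathbb{R}^2 \mid x^2+y^2\le r^2\}$ centred at the origin, with $r>0$ small enough that simultaneously: (i) $D_r$ is a Milnor disk for the polar curve $\Gamma(f,x)$ in the sense of Lemma~\ref{lemma:halfBr}, so that every concentric sphere $\{x^2+y^2=s^2\}$ with $0<s\le r$ is transverse to $\Gamma(f,x)$ and inside $D_r$ each analytically irreducible branch of $\Gamma(f,x)$ at the origin is homeomorphic to a segment; and (ii) $r$ is small enough that Corollary~\ref{cor:distantaMonotone} applies, i.e. the restriction of $\rho:=x^2+y^2$ to each polar half-branch is strictly monotone in $D_r$. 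Since $\Gamma(f,x)$ is a real algebraic curve it has only finitely many branches through the origin, hence only finitely many half-branches $\gamma^*$, so taking the minimum of the finitely many admissible radii leaves a single $r$ that works for all of them at once.

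Next I would fix an arbitrary polar half-branch $\gamma^*$ and work with its representative inside $D_r$. By Definition~\ref{def:halfBrGeneral} this representative is the closure of one of the two connected components of (a Milnor representative of the ambient branch) minus the origin; by Lemma~\ref{lemma:halfBr} the ambient branch is, inside $D_r$, homeomorphic to a segment, so $\gamma^*$ is homeomorphic to a half-open interval with the origin as its limit endpoint. The transversality of the concentric spheres to $\Gamma(f,x)$ forces $\gamma^*$ to cross each sphere $\{\rho=s^2\}$, $0<s\le r$, and in particular to reach the boundary sphere $\partial V=\{\rho=r^2\}$; hence $\gamma^*\cap\partial V\neq\emptyset$.

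Finally I would invoke Corollary~\ref{cor:distantaMonotone}: $\rho$ vanishes only at the origin and is strictly monotone along $\gamma^*$, and since $\rho\to 0$ as one approaches the origin along $\gamma^*$ it is in fact strictly increasing as one moves away from the origin. In particular $\rho$ is injective on $\gamma^*$, so the level set $\partial V=\{\rho=r^2\}$ meets $\gamma^*$ in at most one point. Combined with the previous step this yields exactly one point, which is the claim.

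The only genuinely delicate point — and the one I would treat most carefully — is the passage from the germ $\gamma^*$ to a concrete representative together with the assertion that this representative actually reaches $\partial V$ rather than stopping or turning back inside the open disk. This is exactly the content one must borrow from Lemma~\ref{lemma:halfBr} (existence of a Milnor disk and transversality of the concentric spheres); the strict monotonicity of $x^2+y^2$ from Corollary~\ref{cor:distantaMonotone} rules out a half-branch whose distance to the origin oscillates, but it is the Milnor disk property that guarantees the half-branch is a segment meeting $\partial V$. Everything else — injectivity from monotonicity and the intermediate value theorem — is elementary.
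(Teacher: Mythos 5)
Your argument is correct and follows the route the paper intends: the paper states this corollary without proof, immediately after Corollary \ref{cor:distantaMonotone}, and the uniqueness half is exactly the strict monotonicity of $x^2+y^2$ on each half-branch that you invoke. Your only addition is to make the existence half explicit via the Milnor disk of Lemma \ref{lemma:halfBr} (the half-branch is a segment joining the origin to $\partial V$, so it does meet the boundary), which the paper leaves implicit; this is a sensible and accurate completion rather than a different approach.
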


\subsection{Stabilisation. No spiralling}\label{sect:asymptReeb}

\begin{definition}\label{DefGoodNhb}
Let us consider a polynomial function $f:\mathbb{R}^2\rightarrow\mathbb{R}$ with a strict local minimum at the origin, such that $f(0,0)=0$. Denote by $\Gamma(f,x)$ the polar curve of $f$ with respect to the direction $x$. Let $V$ be a neighbourhood of the origin $O(0,0)$. We say that the neighbourhood $V$ is a \defi{good neighbourhood of the origin for the couple $(f,x)$}\index{good neighbourhood} if $V$ satisfies:

(i) the point $(0,0)$ is \textbf{the only strict local minimum} of $f$ in $V$;

(ii) we have $V\cap (f=0)=\{(0,0)\}.$

(iii) $\Sing (\Gamma(f,x))\cap V \subseteq \{(0,0)\};$

(iv) $V$ is small enough such that on each polar half-branch $\gamma^*$ of $\Gamma(f,x)$ we have $f_{|\gamma^*}$ is strictly monotone;

(v) $V$ is small enough such that on each polar half-branch $\gamma^*$ of $\Gamma(f,x)$ we have $x_{|\gamma^*}$ is strictly monotone.

(vi) $V$ is small enough such that for any polar half-branch $\gamma^*$, the intersection $\gamma^*\cap \partial V$ consists of exactly one point

\end{definition}

\begin{remark}\label{rem:nuSeIntersect}
Condition (i) implies that for any two distinct polar half-branches $\gamma_i^*$ and $\gamma_j^*$, we have $$(\gamma_i^*\cap\gamma_j^*)\cap V=\{(0,0)\}.$$
\end{remark}

\begin{definition}\label{def:asymptPReeb}
Consider a polynomial function $f$ with a strict local minimum at the origin, such that $f(0,0)=0$. The \defi{asymptotic Poincaré-Reeb tree}\index{Poincaré-Reeb tree!asymptotic} of $f$ relative to $x$ is the \enquote{limit} Poincaré-Reeb tree of its sufficiently small levels near the origin with respect to $x$. Here by \enquote{limit} we mean: for a sufficiently small $\varepsilon >0.$
 
\end{definition}

\begin{theorem}\label{th:asymptoticPReeb}
Besides the properties of the Poincaré-Reeb tree (see Corollary \ref{cor:PReebInGeneral}), \textbf{the asymptotic Poincaré-Reeb tree} stabilises up to equivalence (see Definition \ref{def:transversalTree}) when the level gets small enough. It is a \textbf{rooted} plane tree, the root being the image of the origin. The total preorder relation on its vertices, induced by the function $x$, is \textbf{strictly monotone} on each geodesic starting from the root. In addition, the asymptotic Poincaré-Reeb tree is a union of \textbf{a positive tree} (at the right of the origin) and \textbf{a negative tree} (at the left of the origin), the latter two having a common root, that is the image of the origin.
\end{theorem}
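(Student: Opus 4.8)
The plan is to place everything inside a \emph{good neighbourhood} $V$ of the origin for the couple $(f,x)$ in the sense of Definition \ref{DefGoodNhb}, and to read the limit tree off directly from the polar curve $\Gamma(f,x)$.

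\emph{Step 1 (vertices and root).} For $0<\varepsilon\ll 1$ the curve $\mathcal{C}_\varepsilon$ is a smooth Jordan curve with the origin in its interior (Lemma \ref{lemma:origInInterior}), so its Poincaré--Reeb graph $G_\varepsilon$ is a plane tree by Corollary \ref{cor:PReebInGeneral}. The vertices of $G_\varepsilon$ are the $q$-images of the vertical tangency points of $\mathcal{C}_\varepsilon$, i.e. of the points of $\mathcal{C}_\varepsilon\cap\Gamma(f,x)$; by Remark \ref{rem:nuSeIntersect} and Corollary \ref{Cor:NoSpiral} each polar half-branch $\gamma_i^*$ meets $\mathcal{C}_\varepsilon$ in a unique point $P_i(\varepsilon)$ and distinct half-branches are disjoint away from $O$, so the non-root vertices of $G_\varepsilon$ are in bijection with the finitely many polar half-branches in $V$. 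Since $x$ is nonconstant on $\Gamma(f,x)$ (Proposition \ref{prop:projDirNot}), the line $\{x=0\}$ meets each $\gamma_i^*$ only at $O$, hence $f(0,\cdot)$ has $O$ as its only critical point on the relevant vertical segment; so $\mathcal{D}_\varepsilon\cap\{x=0\}$ is a single interval about $O$, and its image $r:=q(O)$ is a single vertex, which we declare to be the root (in agreement with Definition \ref{def:asymptPReeb}).

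\emph{Step 2 (key local lemma).} I will show: a non-root vertex $v$ with $\tilde\Pi(v)=x(P_i(\varepsilon))>0$ has exactly one issuing edge along which $x$ decreases near $v$; symmetrically, if $x(P_i(\varepsilon))<0$ then $v$ has exactly one issuing edge along which $x$ increases. Indeed $x(P_i(\varepsilon))>0$ forces $\gamma_i^*$ to be a right polar half-branch; on $\gamma_i^*$ one has $\partial f/\partial y\equiv 0$, and by Corollaries \ref{cor:xStrictIncreasing} and \ref{RemarkStrictlyIncr} both $x$ and $f$ are strictly monotone along $\gamma_i^*$ with the same orientation away from $O$. Parametrising $\gamma_i^*$ by $x$ and differentiating $f$ along it gives $\frac{d}{dx}f=\partial f/\partial x$, which is therefore of constant sign on $\gamma_i^*\setminus\{O\}$ — it cannot vanish, $O$ being the only critical point of $f$ in $V$ — hence positive. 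Thus at $P:=P_i(\varepsilon)$ we have $\partial f/\partial x(P)>0$ and $\partial f/\partial y(P)=0$, so near $P$ the level curve $(f=\varepsilon)$ is a graph $x=h(y)$ with $h'(y_P)=0$ and $\mathcal{D}_\varepsilon=\{x\le h(y)\}$ locally; consequently, for $x$ slightly below $x_P$ the fibre of $\mathcal{D}_\varepsilon$ is a single interval around $y_P$, so the band of $\mathcal{D}_\varepsilon$ to the left of $P$ — and hence the decreasing edge at $v$ — is unique, while the increasing edges at $v$ correspond to the $0$, $1$ or several intervals obtained for $x$ slightly above $x_P$. The case $x_P<0$ is the mirror image ($\partial f/\partial x(P)<0$).

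\emph{Step 3 (monotonicity, rootedness, decomposition).} Strict monotonicity of $x$ along geodesics from $r$ now follows by a first-turn argument: if a geodesic $r=v_0,v_1,\dots,v_k$ were not monotone, consider the first turning vertex $v_i$; by minimality $x$ is strictly monotone on $v_0,\dots,v_i$, so if the first step increased then $x(v_i)>0$ and $v_i$ would have two decreasing issuing edges (towards $v_{i-1}$ and $v_{i+1}$), contradicting Step 2, while if the first step decreased then $x(v_i)<0$ and $v_i$ would have two increasing issuing edges, again a contradiction. Monotonicity is strict because the open edges are transverse to the foliation (Corollary \ref{cor:PReebInGeneral}), so consecutive vertices have distinct abscissae. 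It follows that no edge of $G_\varepsilon$ joins a vertex with $x>0$ to one with $x<0$ (their join in the tree is $r$, so such an edge would close a cycle) and that $r$ is the only point of $G_\varepsilon$ on $\{x=0\}$; hence $G_\varepsilon=G_\varepsilon^{+}\cup G_\varepsilon^{-}$, where $G_\varepsilon^{+}=\{x\ge 0\}\cap G_\varepsilon$ and $G_\varepsilon^{-}=\{x\le 0\}\cap G_\varepsilon$ are subtrees meeting exactly in $r$. Marking $r$ turns $G_\varepsilon$ into a rooted plane tree, giving the asserted decomposition into a positive tree, a negative tree, and their common root, the image of the origin.

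\emph{Step 4 (stabilisation) and the main difficulty.} It remains to see that $G_\varepsilon$, as a transversal tree up to the equivalence of Definition \ref{def:transversalTree}, is independent of $\varepsilon$ for $0<\varepsilon\ll 1$. The number of vertices is constant by Step 1, and the cyclic order of the half-branches $\gamma_i^*$ around $O$ is fixed; by Steps 2--3 the local type of each vertex and the whole incidence structure of $G_\varepsilon$ are determined by this cyclic order together with the linear order (ties allowed) of the abscissae $x(P_i(\varepsilon))$. Each $P_i(\varepsilon)$ admits a Puiseux parametrisation along $\gamma_i^*$, so $x(P_i(\varepsilon))=c_i\varepsilon^{a_i}+\cdots$ with $a_i>0$; comparing leading exponents and, in case of equality, successive coefficients shows this linear order is eventually constant. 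Hence the combinatorial data defining $G_\varepsilon$ stabilise, and one produces an explicit orientation-preserving homeomorphism of the plane (rescaling the $x$-axis by a suitable $\lambda$) realising the equivalence; combined with Steps 1--3 and Corollary \ref{cor:PReebInGeneral}, this proves the theorem. I expect the main obstacle to be exactly this last step: upgrading the evident limiting picture into a bona fide equivalence of transversal trees, i.e. controlling simultaneously the combinatorial type and the ordering of the critical abscissae $x(P_i(\varepsilon))$ as $\varepsilon\to 0$ and excluding accidental exchanges arbitrarily close to $0$; by contrast Step 2, the geometric heart, is short once the polar-curve results of Section \ref{sect:asymp} are in place.
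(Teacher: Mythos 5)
Your proposal is correct and follows essentially the same route as the paper: the non-root vertices come from the intersections of $\mathcal{C}_\varepsilon$ with the polar half-branches (Corollary \ref{Cor:NoSpiral}), strict monotonicity of $x$ and $f$ along each half-branch (Corollaries \ref{RemarkStrictlyIncr} and \ref{cor:xStrictIncreasing}) rules out edges going back towards the root, and stabilisation comes from comparing Puiseux expansions of finitely many tangency abscissae. If anything, you are more careful than the paper at two points: your Step 2 extracts the sign $\partial f/\partial x>0$ at right-hand tangency points to pin down on which side of $\mathcal{C}_\varepsilon$ the disk lies (the paper only invokes monotonicity of $x$ on the half-branches), and your Step 4 compares the abscissae as functions of $\varepsilon$, whereas the paper compares the parametrisations $x_i(t)-x_j(t)$ at a common Puiseux parameter $t$ — your reparametrisation by $\varepsilon$ is the comparison actually needed for the preorder on a fixed level curve.
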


\vspace{-\baselineskip}
\begin{proof}

Let us first prove the stabilisation. Take two right polar half-branches $\gamma_i$ and $\gamma_j$. Denote by $(x_i(t),y_i(t))$, respectively $(x_j(t),y_j(t))$ their corresponding Newton-Puiseux parametrisations (see \cite[Theorem 2.1.1]{Wa}). Here $x_1(t), y_1(t), x_2(t), y_2(t)\in\mathbb{R}\{t\}$ are convergent analytic parametrisations. Now denote by $g(t):=x_i(t)- x_j(t)$. We have two possibilities: either $g(t)\equiv 0,$ i.e. we have a vertical bitangent, or $g(t)\not\equiv 0.$ In the second case, by taking a sufficiently small $t$, the sign of the analytic function $g$ does not change. Since there are finitely many polar half-branches, we can choose a sufficiently small $t$ such that the total preorder stabilises.

In the asymptotic setting, if we consider a polynomial function $f:\mathbb{R}^2\rightarrow\mathbb{R}$ with a strict local minimum at the origin such that $f(0,0)=0$, then the root of the Poincaré-Reeb tree is the image of the origin by the quotient map (see Section \ref{sect:ConstructionPR}). By Lemma \ref{lemma:origInInterior}, the origin is in the interior of the disk $\mathcal{D}_\varepsilon$. Since the real plane is cooriented, we distinguish without ambiguity between the  positive tree (at the right of the origin) and the negative tree (at the left of the origin), the latter two having the common root, that is the image of the origin.

Moreover, we have the induced application $\tilde{x}:=x$ of the projection function $x:\mathbb{R}^2\rightarrow\mathbb{R}$ to the embedded Poincaré-Reeb tree. By Corollary \ref{cor:xStrictIncreasing}: for any polar half-branch $\gamma^*\subset\Gamma(f,x)$, the restriction $x_{|\gamma^*}$ is strictly monotone. Thus, $\tilde{x}$ has no internal local extremum on an edge of $\mathcal{R}(f,x).$ To the right, $\tilde{x}$ only attains its local maxima on the leaves of the Poincaré-Reeb tree. Similarly, to the left, $\tilde{x}$ only attains its local minima on the leaves. In other words, there are no edges which \enquote{go backwards} towards the root. 

\end{proof}

\begin{remark}

 The strict monotonicity on the geodesics starting from the root implies that the small enough level curves $\mathcal{C}_\varepsilon$ have no turning back or spiralling phenomena. In particular, for $0<\varepsilon\ll 1$ sufficiently small, there are no shapes $\mathcal{C}_\varepsilon$ like the one in Figure \ref{fig:spiraling}.

\begin{figure}[H]
\centering
\includegraphics[scale=0.125]{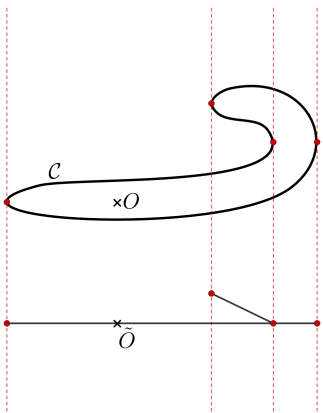} 
\caption{Impossible shape for small enough $0<\varepsilon\ll 1$.\label{fig:spiraling}}
\end{figure}
\end{remark}

\newpage
\printbibliography

\end{document}